\newtheorem{theorem}{Theorem}[section]
\newtheorem{definition}[theorem]{Definition}
\newtheorem{lemma}[theorem]{Lemma}
\newtheorem{notation}[theorem]{Notation}
\newtheorem{proposition}[theorem]{Proposition}
\newtheorem{remark}[theorem]{Remark}
\newenvironment{proof}[1][Proof]{\textbf{#1.} }{\ \rule{0.5em}{0.5em}}
\makeatletter\@addtoreset{equation}{section}\makeatother
\newlength{\depthofsumsign}
\newcommand{\nsum}[1][1.4]{% only for \displaystyle
	\mathop{%
		\raisebox
		{-#1\depthofsumsign+1\depthofsumsign}
		{\scalebox
			{#1}
			{$\displaystyle\sum$}%
		}
	}
}
\begin{document}

\title{On the Inf-Sup Stability of Crouzeix-Raviart Stokes Elements in 3D}
\author{Stefan Sauter\thanks{(stas@math.uzh.ch), Institut f\"{u}r Mathematik,
Universit\"{a}t Z\"{u}rich, Winterthurerstr 190, CH-8057 Z\"{u}rich,
Switzerland}
\and C\'{e}line Torres \thanks{(cetorres@umd.edu), University of Maryland,
Department of Mathematics, 4176 Campus Drive - William E. Kirwan Hall, College
Park MD 20742, USA}}
\maketitle

\begin{abstract}
We consider non-conforming discretizations of the stationary
Stokes equation in three spatial dimensions by Crouzeix-Raviart type elements.
The original definition in the seminal paper by M. Crouzeix and P.-A. Raviart
in 1973 is implicit and also contains substantial freedom for a concrete choice.

In this paper, we introduce \textit{basic} Crouzeix-Raviart basis
functions in 3D in analogy to the 2D case in a fully explicit way. We prove 
that this basic Crouzeix-Raviart element for the Stokes equation is
inf-sup stable for polynomial degree $k=2$ (quadratic velocity approximation).
We identify spurious pressure modes for the conforming $\left(
k,k-1\right)  $ 3D Stokes element and show that these are eliminated by
using the basic Crouzeix-Raviart space.

\end{abstract}

\noindent\emph{AMS Subject Classification: 65N30, 65N12, 76D07, 33C45, }

\noindent\emph{Key Words:Non-conforming finite elements, Crouzeix-Raviart
elements, macroelement technique, Stokes equation.}

%%%%%%%%%%%%%%%%%%%%%%%%%%%%%%%%%%%%%%%%%%%%%%%%%%%%%%%%%%%%%%%%%%%%%%%%%%%%%%%%%%
\section{Introduction}
%%%%%%%%%%%%%%%%%%%%%%%%%%%%%%%%%%%%%%%%%%%%%%%%%%%%%%%%%%%%%%%%%%%%%%%%%%%%%%%%%%

In this paper, we investigate the discretization of the stationary Stokes
problem by Crouzeix-Raviart elements in three dimensions. They were introduced
in the seminal paper \cite{CrouzeixRaviart} in 1973 by Crouzeix and Raviart
with the goal to obtain a stable discretization of the Stokes equation with
relatively few unknowns. They can be considered as an non-conforming
enrichment of conforming finite elements of polynomial degree $k$ for the
velocity and discontinuous pressure of degree $k-1$. It 
is well known that
the conforming pair of finite elements can be unstable; for two dimensions the
proof of the inf-sup stability of Crouzeix-Raviart discretizations of general
order $k$ has been developed over the last 50 years, the inf-sup stability for
$k=1$ has been proved in \cite{CrouzeixRaviart} and only recently the last
open case $k=3$, has been proved in \cite{CCSS_CR_2}. We mention the papers
\cite{Fortin_Soulie}, \cite{ScottVogelius}, \cite{Crouzeix_Falk},
\cite{Baran_Stoyan}, \cite{GuzmanScott2019}, \cite{CCSS_CR_1} which contain
essential milestones for the proof of the inf-sup stability for 
general polynomial degree.
	There is a vast of literature on various further aspects of 
	Crouzeix-Raviart elements; we omit to present a comprehensive review here
	but refer to the excellent overview article \cite{Brenner_Crouzeix} 
	instead.

In contrast to the analysis in 2D, the development of 
Crouzeix-Raviart elements for the Stokes
equation in 3D is still in its infancy. The original definition in
\cite{CrouzeixRaviart} is implicit: a finite element space is a
Crouzeix-Raviart space if it satisfies certain jump conditions across the
element interfaces. This leaves significant freedom for a concrete definition.
In particular, the important question \textquotedblleft What is a
\textit{minimal} Crouzeix-Raviart space?\textquotedblright{}  i.e., 
\textquotedblleft What is a Crouzeix-Raviart space with minimal dimension 
so that the inf-sup condition is satisfied?\textquotedblright{}, is completely
open. For practical purpose, it is an even
stronger obstruction that the basis functions for such a minimal
Crouzeix-Raviart space are unknown. For \textit{quadratic} Crouzeix-Raviart
elements, an explicit basis has been introduced in \cite{Fortin_d3}. In
\cite{CDS}, a spanning set of functions is presented for a \textit{maximal}
Crouzeix-Raviart space of any order $k\in\mathbb{N}$ which allows for a
\textit{local basis}. However, the question of linear independence is subtle,
in particular, the definition of a basis for a \textit{minimal}
Crouzeix-Raviart space.

In this paper, we make a step in these directions. After formulating the
Stokes problem on the continuous level and introducing non-conforming finite
element discretizations in Section \ref{SecFEM}, we define in Section
\ref{SecCanCR} the \textit{basic} Crouzeix-Raviart space in three
dimensions for any polynomial degree $k\geq1$. We call them \textquotedblleft
basic\textquotedblright\ because they are in full analogy as in the 2D
case:
for odd polynomial degree $k$ there is one and only one scalar non-conforming
Crouzeix-Raviart function per inner facet (the analogue of a triangle edge in
2D) supported on the two adjacent simplices. As in the 2D case, this function
can be expressed by a certain orthogonal polynomial composed with barycentric
coordinates; for even polynomial degree $k$ there is one and only one scalar
Crouzeix-Raviart function per tetrahedron (in analogy to one Crouzeix-Raviart
function per triangle in 2D) supported on this tetrahedron. As 
in the 2D case,
these functions can be expressed by an orthogonal polynomial 
composed with
barycentric coordinates. The basic Crouzeix-Raviart spaces
can be defined conceptually in the same manner also in higher dimensions. We
have postponed the technical derivation of a fully explicit representation to
Appendix \ref{AppHigherD} due to the lack of practical applications in spatial
dimension larger than three.
For the Stokes problem and the corresponding basic 
Crouzeix-Raviart elements, we prove the following results.

a) In Section \ref{InfSup2}, we show that the basic Crouzeix-Raviart space
for $k=2$ on simplicial finite element meshes in 3D is inf-sup stable.

b) In Section \ref{SecCritConf}, we identify \textit{critical 
pressures} for
the conforming $\left(  k,k-1\right)  $ discretization of the Stokes problem.
They are related to the presence of \textit{critical edges }in the mesh, see
Def. \ref{CritEdge}. As a consequence, this conforming discretization is not 
inf-sup stable if the mesh contains critical edges.

c) In Section \ref{SecCRstab}, we show that these pressures are eliminated in
the basic Crouzeix-Raviart space. Hence, an inf-sup stable
Crouzeix-Raviart discretization should contain the basic
Crouzeix-Raviart space as a subspace while the question remains open, whether
the basic Crouzeix-Raviart Stokes element (cf. Def. 
\ref{DefCanCR}) is sufficient for an inf-sup stable discretization.

%%%%%%%%%%%%%%%%%%%%%%%%%%%%%%%%%%%%%%%%%%%%%%%%%%%%%%%%%%%%%%%%%%%%%%%%%%%%%%%%%%
\section{Setting}
\label{SecFEM}
%%%%%%%%%%%%%%%%%%%%%%%%%%%%%%%%%%%%%%%%%%%%%%%%%%%%%%%%%%%%%%%%%%%%%%%%%%%%%%%%%%

\subsection{The continuous Stokes problem\label{SecContStokes}}

Let $\Omega\subset\mathbb{R}^{3}$ denote a bounded polyhedral domain with
boundary $\partial\Omega$. We consider the Stokes equation%
\[%
\begin{array}
[c]{llll}%
-\Delta\mathbf{u} & -\nabla p & =\mathbf{f} & \text{in }\Omega,\\
\operatorname*{div}\mathbf{u} &  & =0 & \text{in }\Omega,
\end{array}
\]
with Dirichlet boundary conditions for the velocity and a normalization
condition for the pressure:
\[
\mathbf{u}=\mathbf{0}\quad\text{on }\partial\Omega\quad\text{and\quad}%
\int_{\Omega}p=0.
\]
To state the classical existence and uniqueness result, we 
introduce 
the relevant function spaces and formulate this equation in a variational 
form. Throughout the paper we restrict to vector spaces over the field of
real numbers.

For $s\geq0$, $1\leq p\leq\infty$, $W^{s,p}\left(  \Omega\right)  $ denotes the
classical Sobolev space of functions with norm $\left\Vert \cdot\right\Vert
_{W^{s,p}\left(  \Omega\right)  }$. As usual we write $L^{p}\left(
\Omega\right)  $ instead of $W^{0,p}\left(  \Omega\right)  $ and $H^{s}\left(
\Omega\right)  $ for $W^{s,2}\left(  \Omega\right)  $. For $s\geq0$, we denote
by $H_{0}^{s}\left(  \Omega\right)  $ the closure of the space of infinitely
smooth functions with compact support in $\Omega$ with respect to the
$H^{s}\left(  \Omega\right)  $ norm. Its dual space is denoted by
$H^{-s}\left(  \Omega\right)  $.

The scalar product and norm in $L^{2}\left(  \Omega\right)  $ 
are denoted by
\[%
\begin{array}
[c]{llll}%
\left(  u,v\right)  _{L^{2}\left(  \Omega\right)  }:=\int_{\Omega}uv &
\text{and} & \left\Vert u\right\Vert _{L^{2}\left(  \Omega\right)  }:=\left(
u,u\right)  _{L^{2}\left(  \Omega\right)  }^{1/2} & \text{in }L^{2}\left(
\Omega\right)  .
\end{array}
\]
Vector-valued and $3\times3$ tensor-valued analogues of the function spaces
are denoted by bold and blackboard bold letters, e.g., $\mathbf{H}^{s}\left(
\Omega\right)  =\left(  H^{s}\left(  \Omega\right)  \right)  ^{3}$ and
$\mathbb{H}^{s}=\left(  H^{s}\left(  \Omega\right)  \right)  ^{3\times3}$ and
analogously for other quantities.

The $\mathbf{L}^{2}\left(  \Omega\right)  $ scalar product and norm for vector
valued functions are given by%
\[
\left(  \mathbf{u},\mathbf{v}\right)  _{\mathbf{L}^{2}\left(  \Omega\right)
}:=\int_{\Omega}\left\langle \mathbf{u},\mathbf{v}\right\rangle \quad
\text{and\quad}\left\Vert \mathbf{u}\right\Vert _{\mathbf{L}^{2}\left(
\Omega\right)  }:=\left(  \mathbf{u},\mathbf{u}\right)  _{\mathbf{L}%
^{2}\left(  \Omega\right)  }^{1/2},
\]
where $\left\langle \mathbf{u},\mathbf{v}\right\rangle $ denotes the Euclidean
scalar product in $\mathbb{R}^{3}$. In a similar fashion, we define for
$\mathbf{G},\mathbf{H}\in\mathbb{L}^{2}\left(  \Omega\right)  $ the scalar
product and norm by%
\[
\left(  \mathbf{G},\mathbf{H}\right)  _{\mathbb{L}^{2}\left(  \Omega\right)
}:=\int_{\Omega}\left\langle \mathbf{G},\mathbf{H}\right\rangle \quad
\text{and\quad}\left\Vert \mathbf{G}\right\Vert _{\mathbb{L}^{2}\left(
\Omega\right)  }:=\left(  \mathbf{G},\mathbf{G}\right)  _{\mathbb{L}%
^{2}\left(  \Omega\right)  }^{1/2},
\]
where $\left\langle \mathbf{G},\mathbf{H}\right\rangle =\sum_{i,j=1}%
^{3}G_{i,j}H_{i,j}$. Finally, let $L_{0}^{2}\left(  \Omega\right)  :=\left\{
u\in L^{2}\left(  \Omega\right)~|~\int_{\Omega}u=0\right\}  
$.

We introduce the bilinear forms $a:\mathbf{H}^{1}_0\left(  
\Omega\right)
\times\mathbf{H}^{1}_0\left(  \Omega\right)  \rightarrow\mathbb{R}$ and
$b:L_{0}^{2}\left(  \Omega\right)  \times\mathbf{H}_{0}^{1}\left(
\Omega\right)  \rightarrow\mathbb{R}$ by
\begin{equation}
a\left(  \mathbf{u},\mathbf{v}\right)  :=\left(  \nabla\mathbf{u}%
,\nabla\mathbf{v}\right)  _{\mathbb{L}^{2}\left(  \Omega\right)  },\qquad
b\left(  p,\mathbf{v}\right)  :=\left(  p,\operatorname*{div}\mathbf{v}%
\right)  _{L^{2}\left(  \Omega\right)  }, \label{defabili}%
\end{equation}
where $\nabla\mathbf{u}$ and $\nabla\mathbf{v}$ denote the derivatives of
$\mathbf{u}$ and $\mathbf{v}$. The variational form of the Stokes problem is
given by: Given $\mathbf{f}\in\mathbf{H}^{-1}\left(  \Omega\right)  ,$%
\begin{equation}
\text{find }\left(  \mathbf{u},p\right)  \in\mathbf{H}_{0}^{1}\left(
\Omega\right)  \times L_{0}^{2}\left(  \Omega\right)  \;\text{s.t.\ }\left\{
\begin{array}
[c]{llll}%
a\left(  \mathbf{u},\mathbf{v}\right)  +&b\left(  
p,\mathbf{v}\right) & =\mathbf{f}(\mathbf{v}) & \forall
\mathbf{v}\in\mathbf{H}_{0}^{1}\left(  \Omega\right)  ,\\
b\left(  q,\mathbf{u}\right)  && =0 & \forall q\in L_{0}^{2}\left(
\Omega\right)  .
\end{array}
\right.  \label{varproblemstokes}%
\end{equation}
It is well-known, that (\ref{varproblemstokes})
is well posed (see, e.g., \cite{Girault86}). Since we consider non-conforming 
discretizations, we restrict
the space $\mathbf{H}^{-1}\left(  \Omega\right)  $ for the right-hand side to
a smaller space and assume for simplicity that $\mathbf{f}\in\mathbf{L}%
^{2}\left(  \Omega\right)  $, i.e. we write $\mathbf{f}(\mathbf{v})$ as
$\left(  \mathbf{f},\mathbf{v}\right)  _{\mathbf{L}^{2}\left(  \Omega\right)
}$.

\subsection{Discretization}

Given two finite-dimensional approximation spaces $\mathbf{V}_{h}$ 
with an appropriate norm \(\|\cdot\|_{\mathbf{V}_{h}}\) for the
velocity and $M_{h}$ for the pressure, a finite element approximation of
(\ref{varproblemstokes}) then reads: For given $\mathbf{f}\in\mathbf{L}%
^{2}\left(  \Omega\right)  ,$%
\begin{equation}
\text{find }\left(  \mathbf{u}_{h},p_{h}\right)  \in\mathbf{V}_{h}\times
M_{h}\;\text{s.t.\ }\left\{
\begin{array}
[c]{llll}%
a_{h}\left(  \mathbf{u}_{h},\mathbf{v}\right)  +&b_{h}\left(  p_{h}%
,\mathbf{v}\right)  & =\left(  \mathbf{f},\mathbf{v}\right)  _{\mathbf{L}%
^{2}\left(  \Omega\right)  } & \forall\mathbf{v}\in\mathbf{V}_{h},\\
b_{h}\left(  q,\mathbf{u}_{h}\right) & & =0 & \forall q\in M_{h}.
\end{array}
\right.  \label{Stokesdiscweak}%
\end{equation}
Here, $a_{h}\left(  \cdot,\cdot\right)  :\mathbf{V}_{h}\times\mathbf{V}%
_{h}\rightarrow\mathbb{R}$ is a discrete version of the bilinear form
$a\left(  \cdot,\cdot\right)  $ in (\ref{defabili}) which is defined on the
discrete space $\mathbf{V}_{h}$ and $b_{h}\left(  \cdot,\cdot\right)
:M_{h}\times\mathbf{V}_{h}\rightarrow\mathbb{R}$ is a discrete version of
$b\left(  \cdot,\cdot\right)  $ in (\ref{defabili}). For the 
choice of Crouzeix-Raviart elements, we will give the concrete definition of 
\(\|\cdot\|_{\mathbf{V}_{h}}\) and the bilinear forms \(a_{h}\), \(b_{h}\)  
in Definition \ref{DefCanCR}. It is well known that if $a_{h}\left(  
\cdot,\cdot\right)  $,
$b_{h}\left(  \cdot,\cdot\right)  $ are continuous, $a_{h}\left(  \cdot
,\cdot\right)  $ is symmetric and $\mathbf{V}_{h}$--coercive, and the spaces
$\mathbf{V}_{h}$ and $M_{h}$ satisfy the inequality
\[
\inf_{p\in M_{h}\backslash\left\{  0\right\}  }\sup_{\mathbf{v}\in
\mathbf{V}_{h}\backslash\left\{  \mathbf{0}\right\}  }\frac{b_{h}\left(
p,\mathbf{v}\right)  }{\left\Vert \mathbf{v}\right\Vert 
_{\mathbf{V}_{h}}\left\Vert p\right\Vert _{L^{2}\left(
\Omega\right)  }}\geq\gamma>0,
\]
then the discrete problem is well-posed. In this case, we call the pair
$\left(  \mathbf{V}_{h},M_{h}\right)  $ \textit{inf-sup stable}.

%%%%%%%%%%%%%%%%%%%%%%%%%%%%%%%%%%%%%%%%%%%%%%%%%%%%%%%%%%%%%%%%%%%%%%%%%%%%%%%%%%
\section{Basic Crouzeix-Raviart finite elements in 3D}
\label{SecCanCR}
%%%%%%%%%%%%%%%%%%%%%%%%%%%%%%%%%%%%%%%%%%%%%%%%%%%%%%%%%%%%%%%%%%%%%%%%%%%%%%%%%%
In the following, we define Crouzeix-Raviart spaces in 3D for the velocity
discretization in (\ref{varproblemstokes}). Let $\mathcal{T}$ be a conforming
finite element mesh for $\Omega$ consisting of closed tetrahedra
$K\in\mathcal{T}$. We denote by $\hat{K}$ the reference tetrahedron with
vertices
\begin{equation}%
\begin{array}
[c]{llll}%
\mathbf{\hat{z}}_{1}:=\mathbf{0}, & \mathbf{\hat{z}}_{2}:=\left(
1,0,0\right)  ^{T}, & \mathbf{\hat{z}}_{3}:=\left(  0,1,0\right)  ^{T}, &
\mathbf{\hat{z}}_{4}:=\left(  0,0,1\right)  .
\end{array}
\label{Defzhat}%
\end{equation}
Moreover, let $\mathcal{F}$ ($\mathcal{E}$, $\mathcal{V}$, resp.) be the set
of all two-dimensional facets (one-dimensional edges, vertices, resp.) in the
mesh and let%
\[%
\begin{array}
[c]{ll}%
\mathcal{F}_{\partial\Omega}:=\left\{  F\in\mathcal{F}\mid F\subset
\partial\Omega\right\}  , & \mathcal{F}_{\Omega}:=\mathcal{F}\backslash
\mathcal{F}_{\partial\Omega},\\
\mathcal{E}_{\partial\Omega}:=\left\{  E\in\mathcal{E}\mid E\subset
\partial\Omega\right\}  , & \mathcal{E}_{\Omega}:=\mathcal{E}\backslash
\mathcal{E}_{\partial\Omega},\\
\mathcal{V}_{\partial\Omega}:=\left\{  \mathbf{z}\in\mathcal{V}\mid
\mathbf{z}\in\partial\Omega\right\}  , & \mathcal{V}_{\Omega}:=\mathcal{V}%
\backslash\mathcal{V}_{\partial\Omega}.
\end{array}
\]
For $F\in\mathcal{F}$, $E\in\mathcal{E}$, $\mathbf{z}\in\mathcal{V}$,
we define facet, edge, nodal patches by
\begin{equation}%
\begin{array}
[c]{ll}%
\mathcal{T}_{F}:=\left\{  K\in\mathcal{T}:F\subset K\right\}  , & \omega
_{F}:=\bigcup_{K\in\mathcal{T}_{F}}K,\\
\mathcal{T}_{E}:=\left\{  K\in\mathcal{T}:E\subset K\right\}  , & \omega
_{E}:=\bigcup_{K\in\mathcal{T}_{E}}K,\\
\mathcal{T}_{\mathbf{z}}:=\left\{  K\in\mathcal{T}:\mathbf{z}\in K\right\}
, & \omega_{\mathbf{z}}:=\bigcup_{K\in\mathcal{T}_{\mathbf{z}}}K.
\end{array}
\label{DefPatches}%
\end{equation}
For a subset of tetrahedra $\mathcal{M}\subset\mathcal{T}$, we define the
patch
\[
\operatorname*{dom}\mathcal{M}:=\operatorname*{int}\left(  \bigcup
_{K\in\mathcal{M}}K\right)  ,
\]
where $\operatorname*{int}\left(  D\right)  $ denotes the interior of a set
$D\subset\mathbb{R}^{3}$. For a measurable subset $D\subset\mathbb{R}^{d}$, we
denote by $\left\vert D\right\vert _{d}$ the $d$-dimensional volume of $D$ and
skip the index $d$ if the dimension is clear from the context, e.g.,
$\left\vert K\right\vert $ denotes the three-dimensional volume of a simplex
$K\in\mathcal{T}$ and $\left\vert F\right\vert $ the two-dimensional area of a
facet $F\in\mathcal{F}$.

For a conforming simplicial mesh $\mathcal{T}$ of the domain $\Omega$, let
\[
H^{1}\left(  \mathcal{T}\right)  :=\left\{  u\in L^{2}\left(  
\Omega\right)~
	\Big | ~\forall K\in\mathcal{T}:~\left.  u\right\vert 
	_{\overset{\circ}{K}}\in
	H^{1}\left(  \overset{\circ}{K}\right)  \right\}.
\]

Let $\mathbb{N}:=\left\{  1,2,3,\ldots\right\}  $ and $\mathbb{N}%
_{0}:=\mathbb{N}\cup\left\{  0\right\}  $. For $n\in\mathbb{N}_{0}$ and a
domain $D\subset\mathbb{R}^{3}$ we denote by $\mathbb{P}_{n}(D)$ the space of
polynomials of maximal degree $n$ on $D$ and set $\mathbb{P}_{-1}\left(
D\right)  :=\left\{  0\right\}  $. We introduce the following finite element
spaces. For $k\in\mathbb{N}$, let $S_{k,0}(\mathcal{T})$ denote the space of
globally continuous, piecewise polynomials of degree $\leq k$ with vanishing
trace on the boundary
\[
S_{k,0}(\mathcal{T}):=\left\{  v\in C^{0}(\operatorname*{dom}\mathcal{T}%
)~\big |~\left.  v\right\vert 
_{K}\in\mathbb{P}_{k}(K)\quad\forall 
K\in
\mathcal{T}\quad\wedge\quad v=0\text{ on }\partial\left(  \operatorname*{dom}%
\mathcal{T}\right)  \right\}  .
\]
Its vector valued version is $\mathbf{S}_{k,0}(\mathcal{T}):=(S_{k,0}%
(\mathcal{T}))^{3}$, which is a conforming subspace of 
$\mathbf{H}_{0}^{1}\left(
\Omega\right)  $. The space of discontinuous polynomials of maximal degree
$k-1$ is%
\[
\mathbb{P}_{k-1}\left(  \mathcal{T}\right)  :=\left\{  p\in L^{2}%
\left(\Omega\right) ~\Big |~\left.  p\right\vert 
_{\overset{\circ}{K}}\in\mathbb{P}%
_{k-1}(\overset{\circ}{K})\quad\forall K\in\mathcal{T}\right\}
\]
and the subspace $\mathbb{P}_{k-1,0}\left(  \mathcal{T}\right)  $ is given by%
\[
\mathbb{P}_{k-1,0}\left(  \mathcal{T}\right)  :=\left\{  p\in\mathbb{P}%
_{k-1}\left(  \mathcal{T}\right) ~\Big 
|~\int_{\operatorname*{dom}\mathcal{T}%
}p=0\right\}  .
\]

\begin{notation}
\label{NotBary}For a simplex $K$, its four vertices form the set
$\mathcal{V}\left(  K\right)  $ and its four facets form the set 
\(\mathcal{F}(K)\). For a triangular facet, its three vertices
form the set $\mathcal{V}\left(  F\right)  $, and for an edge $E$, the two
endpoints form the set $\mathcal{V}\left(  E\right)  $. Let $\delta
_{\mathbf{x},\mathbf{y}}$ denote Kronecker's delta. The \emph{barycentric
coordinate} for a vertex $\mathbf{z}\in\mathcal{V}\left(  K\right)  $ is
defined by the conditions%
\[
\lambda_{K,\mathbf{z}}\in\mathbb{P}_{1}\left(  K\right)  \quad\text{and}%
\quad\lambda_{K,\mathbf{z}}\left(  \mathbf{y}\right)  =\delta_{\mathbf{z}%
,\mathbf{y}}\text{ \quad for all vertices }\mathbf{y}\in\mathcal{V}\left(
K\right)  .
\]
For $%
\boldsymbol{\mu}%
=\left(  \mu_{\mathbf{y}}\right)  _{\mathbf{y}\in\mathcal{V}\left(  K\right)
}\in\mathbb{N}_{0}^{4}$, we set%
\[%
 \boldsymbol{\lambda}%
_{K}^{
\boldsymbol{\mu}
}:=
{\displaystyle\prod\limits_{\mathbf{y}\in\mathcal{V}\left(  K\right)  }}
\lambda_{K,\mathbf{y}}^{\mu_{\mathbf{y}}}.
\]
For a facet $F\subset\partial K$ and $%
\boldsymbol{\mu}%
=\left(  \mu_{\mathbf{y}}\right)  _{\mathbf{y}\in\mathcal{V}\left(  F\right)
}\in\mathbb{N}_{0}^{3}$, we set%
\[
 \boldsymbol{\lambda}%
_{K,F}^{%
\boldsymbol{\mu}%
}:=
{\displaystyle\prod\limits_{\mathbf{y}\in\mathcal{V}\left(  F\right)  }}
\lambda_{K,\mathbf{y}}^{\mu_{\mathbf{y}}}.
\]
Finally, for an edge $E\subset\partial K$ and $%
\boldsymbol{\mu}%
=\left(  \mu_{\mathbf{y}}\right)  _{\mathbf{y}\in\mathcal{V}\left(  E\right)
}\in\mathbb{N}_{0}^{2}$, we set
\[%
 \boldsymbol{\lambda}_{K,E}^{%
\boldsymbol{\mu}
_{E}}:=%
{\displaystyle\prod\limits_{\mathbf{y}\in\mathcal{V}\left(  E\right)  }}
\lambda_{K,\mathbf{y}}^{\mu_{\mathbf{y}}}.
\]
We also set $\mathbf{1}_{K}:=\left(  1\right)  _{\mathbf{y}\in\mathcal{V}%
\left(  K\right)  }$, $\mathbf{1}_{F}:=\left(  1\right)  _{\mathbf{y}%
\in\mathcal{V}\left(  F\right)  }$, $\mathbf{1}_{E}:=\left(  1\right)
_{\mathbf{y}\in\mathcal{V}\left(  E\right)  }$. For $\mathbf{y}\in
\mathcal{V}\left(  K\right)  $, we set $\mathbf{e}_{\mathbf{y}}^{K}:=\left(
\delta_{\mathbf{y},\mathbf{x}}\right)  _{\mathbf{x}\in\mathcal{V}\left(
K\right)  }$, for a facet $F$ and $\mathbf{y}\in\mathcal{V}\left(  F\right)
$, we set $\mathbf{e}_{\mathbf{y}}^{F}:=\left(  \delta_{\mathbf{y},\mathbf{x}%
}\right)  _{\mathbf{x}\in\mathcal{V}\left(  F\right)  }$, and for an edge $E$
and $\mathbf{y}\in\mathcal{V}\left(  E\right)  $ let $\mathbf{e}_{\mathbf{y}%
}^{E}:=\left(  \delta_{\mathbf{y},\mathbf{x}}\right)  _{\mathbf{x}%
\in\mathcal{V}\left(  E\right)  }$.
\end{notation}

Next, we define the non-conforming Crouzeix-Raviart space. For 
a 
function $v\in
H^{1}\left(  \mathcal{T}\right)  $, we denote by $\left[  v\right]  _{F}$ the
jump of $v\in\mathbb{P}_{k}(\mathcal{T})$ across the facet $F\in\mathcal{F}$
and $\mathbb{P}_{k-1}(F)$ is the space of polynomials of maximal degree $k-1$
with respect to the local variables in $F$. For $k\geq1$ and any
$F\in\mathcal{F}$, let%
\[
\mathbb{P}_{k,k-1}^{\perp}\left(  F\right)  :=\left\{  q\in\mathbb{P}%
_{k}\left(  F\right)  \mid\left(  q,r\right)  _{L^{2}\left(  F\right)
}=0\quad\forall r\in\mathbb{P}_{k-1}\left(  F\right)  \right\}  .
\]
The scalar version of the Crouzeix-Raviart space of order $k$ is defined
implicitly by%
\begin{equation}
\operatorname*{CR}\nolimits_{k,0}^{\max}\left(  \mathcal{T}\right)  :=\left\{
v\in\mathbb{P}_{k}\left(  \mathcal{T}\right)~\Big |~\left(
\begin{array}
[c]{cl}
& \forall F\in\mathcal{F}_{\Omega}\quad\left[  v\right]  _{F}\in
\mathbb{P}_{k,k-1}^{\perp}\left(  F\right) \\
\text{and} & \forall F\in\mathcal{F}_{\partial\Omega}\quad v\in\mathbb{P}%
_{k,k-1}^{\perp}\left(  F\right)
\end{array}
\right)  \right\}  . \label{DefCRmax}%
\end{equation}
Its vector version is denoted by $\mathbf{CR}_{k,0}^{\max}(\mathcal{T}%
):=(\operatorname*{CR}_{k,0}^{\max}(\mathcal{T}))^{3}$. We also define%
\begin{equation}
S_{k,0}^{\prime}\left(  \mathcal{T}\right)  :=\left\{  v\in S_{k,0}\left(
\mathcal{T}\right)  \mid v(\mathbf{z})=0\quad\forall\mathbf{z}\in
\mathcal{V}\right\}  \label{DefSkprime}%
\end{equation}
as the subspace of $S_{k,0}\left(  \mathcal{T}\right)  $ consisting of
functions which vanishes at the vertices of the mesh.

\begin{remark}
\label{Remmaxcan}In two spatial dimensions, local basis functions for
Crouzeix-Raviart spaces have been defined in \cite{BaranCVD},
\cite{Ainsworth_Rankin}, \cite[for $p=4,6$]{ChaLeeLee}, \cite{ccss_2012},
\cite{Baran_Stoyan}, \cite{CCSS_CR_1}. It turns out that the non-conforming
Crouzeix-Raviart basis functions of \emph{odd }polynomial degree $k$ are
associated to the inner triangle edges while for \emph{even} polynomial degree
they are associated to the triangles in the mesh.

The situation is much more complicated in 3D. In \cite{CDS} local shape
functions are introduced which span the Crouzeix-Raviart space
$\operatorname*{CR}_{k,0}^{\max}\left(  \mathcal{T}\right)  $ and it was shown
that per inner facet $\mathcal{F}$ there exist $\left\lfloor \frac{k+2}%
{3}\right\rfloor $ linearly independent, non-conforming Crouzeix-Raviart
functions and, in addition, per simplex, there exist $\left\lfloor \frac{k}%
{2}\right\rfloor -\left\lfloor \frac{k-1}{2}\right\rfloor $ linearly
independent, non-conforming Crouzeix-Raviart functions. We say that any space
$V$ with $S_{k,0}\left(  \mathcal{T}\right)  \subsetneqq V\subset
\operatorname*{CR}_{k,0}^{\max}\left(  \mathcal{T}\right)  $ is 
\emph{a Crouzeix-Raviart space}.

A natural question is whether there is an analogous choice of $V$ as in two
dimensions: One Crouzeix-Raviart function per facet for odd polynomial degree
and one Crouzeix-Raviart function per tetrahedron for even polynomial degree.
In addition, these Crouzeix-Raviart functions should have a 
\textquotedblleft
similarly simple\textquotedblright\ representation as those in 2D, see
\cite[Def. 3.2]{CCSS_CR_1}. We call the space $S_{k,0}\left(  \mathcal{T}%
\right)  $, enriched by those functions, the \emph{basic Crouzeix-Raviart
space}. Since this space is smaller than $\operatorname*{CR}_{k,0}^{\max
}\left(  \mathcal{T}\right)  $ we have used the superscript \textquotedblleft%
$\max$\textquotedblright\ in (\ref{DefCRmax}).
\end{remark}

Next, we define the basic Crouzeix-Raviart functions on simplicial
meshes in 3D. Let $\alpha,\beta>-1$ and $n\in\mathbb{N}_{0}$. The \emph{Jacobi
polynomial} $P_{n}^{\left(  \alpha,\beta\right)  }$ is a polynomial of degree
$n$ such that
\[
\int_{-1}^{1}P_{n}^{\left(  \alpha,\beta\right)  }\left(  x\right)  \,q\left(
x\right)  \left(  1-x\right)  ^{\alpha}\left(  1+x\right)  ^{\beta}\,dx=0
\]
for all polynomials $q$ of degree less than $n$, and (cf. \cite[Table
18.6.1]{NIST:DLMF})%
\begin{equation}
P_{n}^{\left(  \alpha,\beta\right)  }\left(  1\right)  =\frac{\left(
\alpha+1\right)  _{n}}{n!},\qquad P_{n}^{\left(  \alpha,\beta\right)  }\left(
-1\right)  =\left(  -1\right)  ^{n}\frac{\left(  \beta+1\right)  _{n}}{n!}.
\label{Pnormalization}%
\end{equation}
Here, the \emph{shifted factorial} is defined by $\left(  a\right)
_{n}:=a\left(  a+1\right)  \ldots\left(  a+n-1\right)  $ for $n>0$ and
$\left(  a\right)  _{0}:=1$. Note that $P_{k}^{\left(  0,0\right)  }$ are the
Legendre polynomials (see \cite[18.7.9]{NIST:DLMF}) and we set $L_{k}%
:=P_{k}^{\left(  0,0\right)  }$. For later use, we state an orthogonality
relation on a tetrahedron for polynomials which are related to $P_{k}^{\left(
0,3\right)  }$.

\begin{lemma}
\label{LemOrthoMain}For a tetrahedron $K$ with barycentric 
coordinates
$\lambda_{K,\mathbf{y}}$, $\mathbf{y}\in\mathcal{V}\left(  K\right)  $, the
polynomial $P_{k}^{\left(  0,3\right)  }(1-2\lambda_{K,\mathbf{y}})$ is
orthogonal to $\mathbb{P}_{k-1}(K)$ with respect to the weight functions%
\[
\lambda_{K,\mathbf{z}},\quad\mathbf{z}\in\mathcal{V}\left(  K\right)
\backslash\left\{  \mathbf{y}\right\}  .
\]

\end{lemma}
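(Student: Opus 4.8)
The plan is to reduce the three‑dimensional weighted integral to a one‑dimensional Jacobi orthogonality relation by integrating over the slices on which $\lambda_{K,\mathbf{y}}$ is constant. Fix the vertex $\mathbf{y}$, and fix one further vertex $\mathbf{z}\in\mathcal{V}(K)\setminus\{\mathbf{y}\}$; I treat the single weight $\lambda_{K,\mathbf{z}}$ and, since $\mathbf{z}$ is arbitrary, this yields the assertion for each of the listed weight functions. By affine invariance of the construction (an affine map carries barycentric coordinates to barycentric coordinates, $\mathbb{P}_{k-1}(K)$ onto $\mathbb{P}_{k-1}(\hat{K})$, and contributes only a positive constant Jacobian) it suffices to argue on the reference tetrahedron, and by linearity it suffices to test against a monomial $q$ in the barycentric coordinates, say with exponent $a_{0}$ on $\lambda_{K,\mathbf{y}}$ and exponents $a_{\mathbf{w}}$ on the remaining $\lambda_{K,\mathbf{w}}$, of total degree $|a|=a_{0}+m\le k-1$, where $m:=\sum_{\mathbf{w}\neq\mathbf{y}}a_{\mathbf{w}}$.

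The central step is the change of variables adapted to the slicing. Setting $t:=\lambda_{K,\mathbf{y}}$ and writing the barycentric coordinates on the facet opposite $\mathbf{y}$ as $\lambda_{K,\mathbf{w}}=(1-t)\,\sigma_{\mathbf{w}}$ with $\sum_{\mathbf{w}\neq\mathbf{y}}\sigma_{\mathbf{w}}=1$, a direct determinant computation gives the Jacobian $(1-t)^{2}$, so the volume element factorizes. The monomial $q$ then contributes a factor $t^{a_{0}}(1-t)^{m}$ (times a monomial in the $\sigma_{\mathbf{w}}$), while the weight $\lambda_{K,\mathbf{z}}=(1-t)\sigma_{\mathbf{z}}$ contributes one further factor $(1-t)$. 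Hence the integral separates as
\[
\int_{K}P_{k}^{(0,3)}(1-2\lambda_{K,\mathbf{y}})\,q\,\lambda_{K,\mathbf{z}}
= C\int_{0}^{1}P_{k}^{(0,3)}(1-2t)\,t^{a_{0}}(1-t)^{m+3}\,dt,
\]
where $C>0$ collects the (positive, $t$‑independent) integral of the residual monomial in the $\sigma_{\mathbf{w}}$ over the unit $2$‑simplex.

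It remains to show that the one‑dimensional integral vanishes. Substituting $x=1-2t$ transforms it, up to a positive constant, into
\[
\int_{-1}^{1}P_{k}^{(0,3)}(x)\,(1-x)^{a_{0}}(1+x)^{m}\,(1+x)^{3}\,dx,
\]
in which the polynomial $(1-x)^{a_{0}}(1+x)^{m}$ has degree $a_{0}+m=|a|\le k-1<k$. Since $P_{k}^{(0,3)}$ is by definition orthogonal to every polynomial of degree less than $k$ with respect to the Jacobi weight $(1-x)^{0}(1+x)^{3}$, this integral is zero. This proves orthogonality for the weight $\lambda_{K,\mathbf{z}}$, and the lemma follows.

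The only delicate point is the exponent bookkeeping in the second step: the Jacobian power $(1-t)^{2}$ together with the single weight factor $(1-t)^{1}$ must combine to exactly the exponent $3$ of the Jacobi weight, leaving a residual test polynomial of degree $|a|\le k-1$. This is precisely why the statement uses a \emph{single} weight $\lambda_{K,\mathbf{z}}$ rather than a product of the three: a product would contribute $(1-t)^{3}$, raising the reduced degree to $|a|+2$, which can reach $k+1>k$, so the Jacobi orthogonality would no longer apply. It is also why the superscript $(0,3)$ — with $3$ the spatial dimension — is the correct Jacobi parameter here.
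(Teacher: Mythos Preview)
Your proof is correct and follows essentially the same route as the paper's. Both arguments reduce to the reference tetrahedron, test against monomials, integrate out the directions transverse to $\lambda_{K,\mathbf{y}}$ to obtain the one-dimensional integral $\int_{0}^{1}P_{k}^{(0,3)}(1-2t)\,t^{a_{0}}(1-t)^{m+3}\,dt$, and finish with the defining orthogonality of $P_{k}^{(0,3)}$; the only cosmetic difference is that the paper performs the transverse reduction by iterated Cartesian integration over $x_{3}$ and then $x_{2}$, whereas you use the Duffy-type slicing $\lambda_{K,\mathbf{w}}=(1-t)\sigma_{\mathbf{w}}$ with Jacobian $(1-t)^{2}$, which packages the same computation more geometrically.
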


The assertion is a particular case of \cite[Prop. 2.3.8]{dunkl2014orthogonal}.
Since the proof for our concrete case is very simple we give it here for completeness.%

\begin{proof}
Let $\mathbf{y},\mathbf{z}\in\mathcal{V}\left(  K\right)  $ with
$\mathbf{y}\neq\mathbf{z}$ and let $\chi_{K}:\hat{K}\rightarrow K$ be an
affine pullback such that $\lambda_{K,\mathbf{y}}\circ\chi_{K}\left(
\mathbf{x}\right)  =x_{1}$ and $\lambda_{K,\mathbf{z}}\circ\chi_{K}\left(
\mathbf{x}\right)  =x_{2}$. For $\boldsymbol{\alpha}
=\left(  \alpha_{j}\right)  _{j=1}^{3}\in\mathbb{N}_{0}^{3}$ with $\left\vert
\boldsymbol{\alpha}
\right\vert \leq k-1$, let 
$\hat{q}^{\boldsymbol{\alpha}} (\boldsymbol{x})
:=x_{1}^{\alpha_{1}}x_{2}^{\alpha_{2}}%
	x_{3}^{\alpha_{3}}$ and note that $\mathbb{P}_{k-1}(K)$ is spanned by the
lifted versions 
$\hat{q}^{\boldsymbol{\alpha}}\circ\chi_{K}^{-1}$. Then,
{\allowdisplaybreaks
\begin{align*}
\int_{K}P_{k}^{\left(  0,3\right)  }  &  \left(  1-2\lambda_{K,\mathbf{y}%
}\right)  \lambda_{K,\mathbf{z}} ~ \hat{q}^{\boldsymbol{\alpha}}
\circ\chi_{K}^{-1} \\
&  =\frac{\left\vert K\right\vert }{\left\vert \hat{K}\right\vert }\int%
_{0}^{1}\int_{0}^{1-x_{1}}\int_{0}^{1-x_{1}-x_{2}}P_{k}^{\left(  0,3\right)
}\left(  1-2x_{1}\right)  x_{1}^{\alpha_{1}}x_{2}^{\alpha_{2}+1}x_{3}%
^{\alpha_{3}}dx_{3}dx_{2}dx_{1}\\
&  =\frac{1}{\alpha_{3}+1}\frac{\left\vert K\right\vert }{\left\vert \hat
{K}\right\vert }\int_{0}^{1}\int_{0}^{1-x_{1}}P_{k}^{\left(  0,3\right)
}\left(  1-2x_{1}\right)  x_{1}^{\alpha_{1}}x_{2}^{\alpha_{2}+1}\left(
1-x_{1}-x_{2}\right)  ^{\alpha_{3}+1}dx_{2}dx_{1}\\
&  =\frac{\left(  \alpha_{2}+1\right)  !\alpha_{3}
!}{\left(  \alpha_{2}+\alpha_{3}+3\right)  !}\frac{\left\vert K\right\vert
}{\left\vert \hat{K}\right\vert }\int_{0}^{1}P_{k}^{\left(  0,3\right)
}\left(  1-2x_{1}\right)  \left(  1-x_{1}\right)  ^{3}x_{1}^{\alpha_{1}%
}\left(  1-x_{1}\right)  ^{\alpha_{2}+\alpha_{3}}dx_{1}\\
&  \overset{1-2x_{1}=t}{=}\frac{\left(  \alpha_{2}+1\right)  !\alpha_{3}
		!}{16\left(  \alpha_{2}+\alpha_{3}+3\right)  !}%
\frac{\left\vert K\right\vert }{\left\vert \hat{K}\right\vert }\int_{-1}%
^{1}P_{k}^{\left(  0,3\right)  }\left(  t\right)  \left(  t+1\right)
^{3}\underbrace{\left(  \frac{1-t}{2}\right)  ^{\alpha_{1}}\left(  \frac
{1+t}{2}\right)  ^{\alpha_{2}+\alpha_{3}}}_{\in\mathbb{P}_{k-1}(\mathbb{R}%
)}dt\\
&  =0,
\end{align*}}
by the orthogonality properties of the Jacobi polynomials.%
\end{proof}

We introduce univariate polynomials $Q_{k}\in\mathbb{P}_{k}$ by%
\begin{equation}
Q_{k}:=\frac{1}{k+1}\left(  L_{k+1}-L_{k}\right)  ^{\prime}. \label{defQk2}%
\end{equation}

\begin{definition}
Let $\mathcal{T}$ be a conforming simplicial finite element mesh in 3D.

\begin{enumerate}
\item For even $k\geq2$, and any $K\in\mathcal{T}$, the simplex-oriented
\emph{Crouzeix-Raviart basis function} $B_{k}^{\operatorname*{CR},K}%
\in\mathbb{P}_{k}\left(  \mathcal{T}\right)  $ is given by
\begin{equation}
B_{k}^{\operatorname*{CR},K}:=\left\{
\begin{array}
[c]{ll}%
\left(  \sum_{\mathbf{z}\in\mathcal{V}\left(  K\right)  }Q_{k}\left(
1-2\lambda_{K,\mathbf{z}}\right)  \right)  -1 & \text{on }K\text{,}\\
0 & \text{otherwise.}%
\end{array}
\right.  \label{def:CRtetrahedron}%
\end{equation}

\item For odd $k\geq1$, and any $F\in\mathcal{F}_{\Omega}$, the facet-oriented
\emph{Crouzeix-Raviart basis function} $B_{k}^{\operatorname*{CR},F}%
\in\mathbb{P}_{k}\left(  \mathcal{T}\right)  $ is given by%
\begin{equation}
B_{k}^{\operatorname*{CR},F}:=\left\{
\begin{array}
[c]{ll}%
Q_{k}\left(  1-2\lambda_{K,\mathbf{z}}\right)  & \text{for }K\in
\mathcal{T}_{F},\\
0 & \text{otherwise,}%
\end{array}
\right.  \label{def:CRfacet}%
\end{equation}
where $\lambda_{K,\mathbf{z}}$ denotes the barycentric coordinate for the
vertex $\mathbf{z}\in\mathcal{V}\left(  K\right)  $ opposite to $F$%
.\footnote{Since $\lambda_{K,\mathbf{z}}=0$ on $F$, the (constant) values of
$B_{k}^{\operatorname*{CR},F}$ on $F$ from both sides coincide and are given
by $Q_{k}\left(  1\right)  =1$. This implies, that $B_{k}^{\operatorname*{CR}%
,F}$ is continuous across $F$.}
\end{enumerate}
\end{definition}

\begin{theorem}
\label{thm:CR-tetrahedron}\quad

\begin{enumerate}[(a)]
\item Let $K\in\mathcal{T}$ and $k\geq2$ be even. The function
$B_{k}^{\operatorname*{CR},K}$ in (\ref{def:CRtetrahedron}) is $L^{2}\left(
F\right)  $-orthogonal to $\mathbb{P}_{k-1}\left(  F\right)  $ on any facet
$F$ of $K$ and belongs to $\operatorname*{CR}\nolimits_{k,0}^{\max}\left(
\mathcal{T}\right)  $.

\item Let $F\in\mathcal{F}_{\Omega}$ and $k\geq1$ be odd. The function
$B_{k}^{\operatorname*{CR},F}$ in (\ref{def:CRfacet}) is $L^{2}\left(
F\right)  $-orthogonal to $\mathbb{P}_{p-1}\left(  F^{\prime}\right)  $ on any
outer facet $F^{\prime}\in\partial\omega_{F}$, continuous across $F$, and
belongs to $\operatorname*{CR}\nolimits_{k,0}^{\max}\left(  \mathcal{T}%
\right)  $.
\end{enumerate}
\end{theorem}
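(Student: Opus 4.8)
The plan is to reduce both parts to a single planar orthogonality fact together with one elementary univariate identity for $Q_k$. Write $\mathbf{z}$ for the vertex of a tetrahedron opposite a facet $F$. Since $\lambda_{K,\mathbf{z}}=0$ on $F$ while the remaining three barycentric coordinates restrict to the barycentric coordinates $\lambda_{F,\mathbf{y}}$ of the triangle $F$, and since $Q_k(1)=1$, the restriction of $B_k^{\operatorname*{CR},K}$ to any facet collapses to
\[
B_k^{\operatorname*{CR},K}\big|_F=\sum_{\mathbf{y}\in\mathcal{V}(F)}Q_k\!\left(1-2\lambda_{F,\mathbf{y}}\right),
\]
while the restriction of $B_k^{\operatorname*{CR},F}$ (part (b)) to any outer facet $F'\in\partial\omega_F$ reduces to the \emph{single} term $Q_k(1-2\lambda_{F',\mathbf{z}})$, because the vertex $\mathbf{z}$ opposite $F$ lies on every other facet $F'$ of the same tetrahedron, so $\lambda_{K,\mathbf{z}}|_{F'}=\lambda_{F',\mathbf{z}}$. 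Hence everything follows once I establish the planar claim: \emph{for any triangle $F$ and any vertex $\mathbf{y}\in\mathcal{V}(F)$, the function $Q_k(1-2\lambda_{F,\mathbf{y}})$ is $L^2(F)$-orthogonal to $\mathbb{P}_{k-1}(F)$}, valid for all $k\ge1$ and hence covering both the even and odd case.

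The univariate core I would prove first is that, for every $p\in\mathbb{P}_k(\mathbb{R})$,
\[
\int_{-1}^1 Q_k(t)\,p(t)\,dt=\frac{2(-1)^k}{k+1}\,p(-1).
\]
This follows by integrating by parts in $Q_k=\tfrac1{k+1}(L_{k+1}-L_k)'$: the interior term $\int_{-1}^1(L_{k+1}-L_k)\,p'$ vanishes because $\deg p'\le k-1$ and both $L_{k+1}$ and $L_k$ are $L^2$-orthogonal to $\mathbb{P}_{k-1}$, while the boundary term survives only at $t=-1$ since $(L_{k+1}-L_k)(1)=0$ and $(L_{k+1}-L_k)(-1)=-2(-1)^k$, using $L_n(1)=1$, $L_n(-1)=(-1)^n$. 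The consequence I actually use is that $Q_k$ is $L^2(-1,1)$-orthogonal to $(1+t)\,\mathbb{P}_{k-1}(\mathbb{R})$, i.e. to every polynomial of degree $\le k$ that vanishes at $t=-1$. (This is the one-dimensional counterpart of the mechanism behind Lemma~\ref{LemOrthoMain}; one could instead route the planar claim through a $2$D analogue of that lemma with $P_k^{(0,2)}$, but the identity above is more direct.)

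To deduce the planar claim I would slice $F$ along the level sets of $\lambda_{F,\mathbf{y}}$: writing the two remaining barycentric coordinates as $(1-s)\sigma$ and $(1-s)(1-\sigma)$ with $s=\lambda_{F,\mathbf{y}}$ and $\sigma\in[0,1]$, Fubini gives
\[
\int_F Q_k(1-2\lambda_{F,\mathbf{y}})\,r\,dF=C\int_0^1 Q_k(1-2s)\,h(s)\,ds,\qquad h(s):=\int_{\text{slice}}r,
\]
with $C$ a nonzero geometric constant. A barycentric monomial on $F$ of total degree $\le k-1$, with exponent $a$ on $\lambda_{F,\mathbf{y}}$ and exponents $b,c$ on the other two coordinates, integrates over the slice to a multiple of $s^a(1-s)^{b+c+1}$; hence $h\in\mathbb{P}_k$ and $h$ is divisible by $(1-s)$, i.e. $h(1)=0$. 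Substituting $t=1-2s$ turns $h$ into an element of $(1+t)\,\mathbb{P}_{k-1}$, so the integral vanishes by the univariate orthogonality. This slicing reduction — in particular verifying that the slice integral always carries the vanishing factor $(1-s)$, equivalently $p(-1)=0$ — is the one genuinely delicate step; everything after it is bookkeeping.

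Finally I would assemble the parts. For (a), $B_k^{\operatorname*{CR},K}$ is supported on $K$, so across every facet its jump equals $\pm B_k^{\operatorname*{CR},K}|_F=\pm\sum_{\mathbf{y}\in\mathcal{V}(F)}Q_k(1-2\lambda_{F,\mathbf{y}})\in\mathbb{P}_{k,k-1}^{\perp}(F)$ by the planar claim, and on boundary facets the same computation gives the required orthogonality directly; thus $B_k^{\operatorname*{CR},K}\in\operatorname*{CR}_{k,0}^{\max}(\mathcal{T})$. For (b), continuity across $F$ is immediate since both adjacent pieces equal $Q_k(1)=1$ on $F$; each outer facet $F'$ carries the single term $Q_k(1-2\lambda_{F',\mathbf{z}})$, which lies in $\mathbb{P}_{k,k-1}^{\perp}(F')$ by the planar claim. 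This simultaneously yields the outer-facet statement and, together with the vanishing jump across $F$ and the zero values outside $\omega_F$, verifies every jump condition, so $B_k^{\operatorname*{CR},F}\in\operatorname*{CR}_{k,0}^{\max}(\mathcal{T})$.
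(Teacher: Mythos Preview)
Your proof is correct and follows essentially the same route as the paper: restrict to a facet, slice along level sets of one barycentric coordinate so that the facet integral collapses to a univariate integral of $Q_k(1-2s)$ against a polynomial $h\in\mathbb{P}_k$ with $h(1)=0$, then integrate by parts using $Q_k=\tfrac{1}{k+1}(L_{k+1}-L_k)'$ together with $(L_{k+1}-L_k)(1)=0$. The only difference is packaging: you first isolate the univariate identity $\int_{-1}^1 Q_k\,p=\tfrac{2(-1)^k}{k+1}p(-1)$ and the standalone ``planar claim'' that each single term $Q_k(1-2\lambda_{F,\mathbf{y}})\in\mathbb{P}_{k,k-1}^{\perp}(F)$, then assemble (a) and (b) from that; the paper instead chooses an explicit affine pullback to the reference triangle and carries out the integration by parts inline for a generic monomial, arriving at the same function $g\in\mathbb{P}_k$ with $g(1)=0$ (your $h$). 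Your formulation has the mild advantage that the planar claim immediately serves both parts without repeating the computation.
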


\begin{proof}
\textbf{(a): }Let $k\geq2$ be 
even. Let 
$F\in\mathcal{F}_{\Omega}$ with
$F\subset\partial K$ and let $\mathbf{z}\in\mathcal{V}\left(  K\right)  $
denote the vertex opposite to $F$. Recall that the Legendre polynomial
satisfies (cf. \cite[Combine 18.9.19, 18.7.9 and Table 18.6.1]{NIST:DLMF})%
\begin{equation}
L_{k}^{\prime}(\pm1)=\left(  \pm1\right)  ^{k-1}\binom{k+1}{2}.
\label{Lendpoints}%
\end{equation}
Since $\lambda_{K,\mathbf{z}}=0$ on $F$, the function $B_{k}%
^{\operatorname*{CR},K}$ can be expressed by%
\begin{align*}
\left.  {B_{k}^{\operatorname*{CR},K}}\right\vert _{F}  &  ={\left(
\sum_{\mathbf{y}\in\mathcal{V}\left(  F\right)  }\left.  Q_{k}\left(
1-2\lambda_{K,\mathbf{y}}\right)  \right\vert _{F}\right)  }+\frac
{L_{k+1}^{\prime}\left(  1\right)  -L_{k}^{\prime}\left(  1\right)  }{k+1}-1\\
&  =\sum_{\mathbf{y}\in\mathcal{V}\left(  F\right)  }Q_{k}\left(  {1-2}\left.
{\lambda_{K,\mathbf{y}}}\right\vert _{F}\right)  .
\end{align*}
For $\mathbf{y}\in\mathcal{V}\left(  F\right)  $, let $\chi_{K,\mathbf{y}%
}:\hat{K}\rightarrow K$ denote an affine bijection with $\chi_{K}:\hat
{F}\rightarrow F$ for $\hat{F}:=\left\{  \mathbf{x}\in\hat{K}\mid
x_{3}=0\right\}  $ and $\chi_{K,\mathbf{y}}\left(  1,0,0\right)  =\mathbf{y}$.
Hence, it suffices to show that
\begin{align*}
I_{k}^{
\boldsymbol{\mu}%
}  = 0, \qquad \forall%
\boldsymbol{\mu}%
\in\mathbb{N}_{0}^{3}\quad\text{with\quad}\left\vert
\boldsymbol{\mu}%
\right\vert \leq k-1,
\end{align*}
where
\begin{align*}
I_{k}^{
	\boldsymbol{\mu}%
} &:=\int_{0}^{1}\int_{0}^{1-x_{1}}Q_{k}\left(  1-2x_{1}\right)  \left(
1-x_{1}-x_{2}\right)  ^{\mu_{1}}x_{1}^{\mu_{2}}x_{2}^{\mu_{3}}dx_{2}dx_{1}\\
&  =\int_{0}^{1}Q_{k}\left(  1-2x_{1}\right)  g\left(  x_{1}\right)  dx_{1},\\
g\left(  x_{1}\right)  &:=x_{1}^{\mu_{2}}\left(  \int_{0}^{1-x_{1}}\left(
1-x_{1}-x_{2}\right)  ^{\mu_{1}}x_{2}^{\mu_{3}}dx_{2}\right)  .
\end{align*}
It is easy to see that $g\in\mathbb{P}_{\mu_{2}+\mu_{1}+\mu_{3}+1}%
\subset\mathbb{P}_{k}$ and $g\left(  1\right)  =0$. Therefore, 
we get%
\begin{align*}
I_{k}^{
\boldsymbol{\mu}
}  &  =\frac{1}{k+1}\int_{0}^{1}\left(  L_{k+1}-L_{k}\right)  ^{\prime}\left(
1-2x\right)  g(x)dx\\
&  =\frac{1}{2\left(  k+1\right)  }\int_{0}^{1}\left(  L_{k+1}-L_{k}\right)
\left(  1-2x\right)  \underbrace{g^{\prime}(x)}_{\in\mathbb{P}_{k-1}%
}dx-\left.  \frac{1}{2\left(  k+1\right)  }\left(  L_{k+1}-L_{k}\right)
\left(  1-2x\right)  g(x)\right\vert _{x=0}^{x=1}\\
&  =0,
\end{align*}
by the orthogonality of the Legendre polynomials and by the 
properties
$L_{n}(1)=1$ for all $n\in\mathbb{N}_{0}$ and $g(1)=0.$ This proves (a).

\textbf{(b): }Let $k\geq1$ be odd. Let $F\in\mathcal{F}_{\Omega}$ and
$K\subset\mathcal{T}_{F}$. The vertex of $K$ opposite to $F$ is denoted by
$\mathbf{z}$. Note that $\lambda_{K,\mathbf{z}}\neq0$ on any 
$F^{\prime
}\subset\partial\omega_{F}\cap K$. The proof of orthogonality follows from a
repetition of the arguments as in the proof of (a).%
\end{proof}

We now define the space
\[
B_{k}^{\operatorname{nc}}\left(  \mathcal{T}\right)  :=\left\{
\begin{array}
[c]{ll}%
\operatorname{span}\left\{  B_{k}^{\operatorname{CR},K}\mid K\in
\mathcal{T}\right\}  & \text{if }k\text{ is even,}\\
\operatorname{span}\left\{  B_{k}^{\operatorname{CR},F}\mid F\in
\mathcal{F}_{\Omega}\right\}  & \text{if }k\text{ is odd.}%
\end{array}
\right.
\]

\begin{definition}\label{DefCanCR}
The scalar \emph{basic Crouzeix-Raviart space} of order
$k$ for conforming simplicial finite element meshes $\mathcal{T}$ in 3D is
given by (see (\ref{DefSkprime}))%
\begin{equation}
\operatorname*{CR}\nolimits_{k,0}\left(  \mathcal{T}\right)  :=B_{k}%
^{\operatorname{nc}}\left(  \mathcal{T}\right)  +\left\{
\begin{array}
[c]{ll}%
S_{k,0}\left(  \mathcal{T}\right)  & \text{if }k\text{ is even,}\\
S_{k,0}^{\prime}\left(  \mathcal{T}\right)  & \text{if }k\text{ is odd.}%
\end{array}
\right.  \label{defCRcan}%
\end{equation}
The \emph{basic Crouzeix-Raviart Stokes element} is given by
$\mathbf{V}_{h}:=\mathbf{CR}_{k,0}\left(  \mathcal{T}\right)  :=\left(
\operatorname*{CR}_{k,0}\left(  \mathcal{T}\right)  \right)  ^{3}$ 
and
$M_{h}:=\mathbb{P}_{k-1,0}\left(  \mathcal{T}\right)  $. The 
norm in \(\mathbf{V}_{h}\) is given by the broken \(H^1\)-seminorm,
	\begin{align*}
	\|\mathbf{v}\|_{\mathbf{V}_{h}} := 
	\sqrt{\sum\limits_{K\in\mathcal{T}} \int_{K} 
		\left|\nabla\mathbf{v}\right|^2},
	\end{align*}
which is a norm owing to the discrete  Poincaré-Friedrichs inequality 
\cite[Theorem (10.6.12)]{scottbrenner}.
In this case, the
bilinear forms $a_{h}\left(  \cdot,\cdot\right)  $, $b_{h}\left(  \cdot
,\cdot\right)  $ in (\ref{Stokesdiscweak}) are given by%
\begin{equation*}%
\begin{array}
[c]{ll}%
a_{h}\left(  \mathbf{u},\mathbf{v}\right)  :=%
{\displaystyle\sum\limits_{K\in\mathcal{T}}}
{\displaystyle\int_{K}}
\left\langle \nabla\mathbf{u},\nabla\mathbf{v}\right\rangle  & \mathbf{\forall
u},\mathbf{v}\in\mathbf{V}_{h},\\
b_{h}\left(  q,\mathbf{v}\right)  :=%
{\displaystyle\sum\limits_{K\in\mathcal{T}}}
{\displaystyle\int_{K}}
q\operatorname{div}\mathbf{v} & \forall\left(  q,\mathbf{v}\right)  \in
M_{h}\times\mathbf{V}_{h}.
\end{array}
\end{equation*}

\end{definition}

\begin{remark}
For $k=1$ and $F\in\mathcal{F}_{\Omega}$, we obtain explicitly%
\[
B_{1}^{\operatorname*{CR},F}:=%
\begin{cases}
1-3\lambda_{K,\mathbf{z}} & \text{on }K\in\mathcal{T}_{F},\\
0 & \text{otherwise,}%
\end{cases}
\]
where $\mathbf{z}\in\mathcal{V}\left(  K\right)  $ is the vertex opposite to
$F$. This function is the basis function from the original paper
\cite[(5.1)]{CrouzeixRaviart}.
\end{remark}

\begin{lemma}
For any $k$, the sum in (\ref{defCRcan}) is direct and we have%
\[
S_{k,0}\left(  \mathcal{T}\right)  \subset\operatorname*{CR}\nolimits_{k,0}%
\left(  \mathcal{T}\right)  \subset\operatorname*{CR}\nolimits_{k,0}^{\max
}\left(  \mathcal{T}\right)  .
\]
\end{lemma}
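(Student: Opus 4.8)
The plan is to prove three things separately — the outer inclusion $\operatorname{CR}_{k,0}(\mathcal{T})\subset\operatorname{CR}_{k,0}^{\max}(\mathcal{T})$, the inner inclusion $S_{k,0}(\mathcal{T})\subset\operatorname{CR}_{k,0}(\mathcal{T})$, and directness of the sum in (\ref{defCRcan}) — and throughout to separate the straightforward even case from the genuinely harder odd case. The outer inclusion is immediate for every $k$: since $\operatorname{CR}_{k,0}^{\max}(\mathcal{T})$ is a linear space it suffices to check the two summands. The space $B_k^{\operatorname{nc}}(\mathcal{T})$ lies in $\operatorname{CR}_{k,0}^{\max}(\mathcal{T})$ by Theorem \ref{thm:CR-tetrahedron}, while any $v\in S_{k,0}(\mathcal{T})$ (hence any $v\in S_{k,0}'(\mathcal{T})\subset S_{k,0}(\mathcal{T})$) is globally continuous with vanishing boundary trace, so that $[v]_F=0\in\mathbb{P}_{k,k-1}^{\perp}(F)$ on inner facets and $v|_F=0\in\mathbb{P}_{k,k-1}^{\perp}(F)$ on boundary facets.

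The tool for the other two parts is a facet-trace identity. For a tetrahedron $K$, a vertex $\mathbf{z}\in\mathcal{V}(K)$, and a facet $F$ of $K$, the restriction $Q_k(1-2\lambda_{K,\mathbf{z}})|_F$ equals the constant $Q_k(1)=1$ when $\mathbf{z}\notin\mathcal{V}(F)$ (because $\lambda_{K,\mathbf{z}}|_F=0$) and equals $Q_k(1-2\lambda_{F,\mathbf{z}})$ when $\mathbf{z}\in\mathcal{V}(F)$, where $\lambda_{F,\mathbf{z}}:=\lambda_{K,\mathbf{z}}|_F$ is the two-dimensional barycentric coordinate of $F$ and is independent of the adjacent $K$. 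For even $k$, writing $w=\sum_K c_KB_k^{\operatorname{CR},K}$ this gives $[w]_F=(c_{K_1}-c_{K_2})\Phi_F$ across $F=K_1\cap K_2$, with $\Phi_F:=\sum_{\mathbf{y}\in\mathcal{V}(F)}Q_k(1-2\lambda_{F,\mathbf{y}})$, and one checks $\Phi_F\neq0$ (e.g. its vertex values equal $k+3$). For odd $k$, writing $w=\sum_{F\in\mathcal{F}_\Omega}c_FB_k^{\operatorname{CR},F}$ and noting that on $K$ the coefficient of $Q_k(1-2\lambda_{K,\mathbf{z}})$ is $c_F$ with $F$ the facet of $K$ opposite $\mathbf{z}$, the same identity yields $[w]_G=\sum_{\mathbf{z}\in\mathcal{V}(G)}(c_{F_{\mathbf{z}}^a}-c_{F_{\mathbf{z}}^b})\,Q_k(1-2\lambda_{G,\mathbf{z}})$ across $G=K_a\cap K_b$, where $F_{\mathbf{z}}^{a},F_{\mathbf{z}}^{b}$ are the facets of $K_a,K_b$ opposite $\mathbf{z}$ and the apex terms cancel.

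For even $k$ both remaining claims follow at once: $S_{k,0}(\mathcal{T})$ is literally a summand of $\operatorname{CR}_{k,0}(\mathcal{T})$, and $[w]_F=(c_{K_1}-c_{K_2})\Phi_F=0$ together with connectedness of the mesh forces all $c_K$ equal, after which the one-sided trace $c_K\Phi_F=0$ on any boundary facet forces the common value to vanish, giving directness. The odd case carries the real content. Here I would use that the vertex-evaluation matrix of $\{Q_k(1-2\lambda_{F,\mathbf{z}})\}_{\mathbf{z}\in\mathcal{V}(F)}$ is $J-(k+2)I$ (all-ones minus $(k+2)$ times identity), with determinant $(1-k)(k+2)^2$; hence these three facet traces are linearly independent for odd $k\geq3$, the single exceptional dependence occurring precisely at $k=1$, where $S_{k,0}'(\mathcal{T})=\{0\}$ renders directness trivial and the inclusion is the classical statement that the conforming $P_1$ space sits inside the nonconforming one.

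The main obstacle is the odd case with $k\geq3$, which I would organise as the single statement that the vertex-evaluation map $\operatorname{ev}\colon B_k^{\operatorname{nc}}(\mathcal{T})\cap S_{k,0}(\mathcal{T})\to\mathbb{R}^{\mathcal{V}_\Omega}$, $w\mapsto(w(\mathbf{z}))_{\mathbf{z}\in\mathcal{V}_\Omega}$, is an isomorphism: its kernel is exactly $B_k^{\operatorname{nc}}(\mathcal{T})\cap S_{k,0}'(\mathcal{T})$, so injectivity is directness, while surjectivity produces for each interior vertex $\mathbf{z}_0$ a continuous, boundary-vanishing $w_{\mathbf{z}_0}\in B_k^{\operatorname{nc}}(\mathcal{T})$ with $w_{\mathbf{z}_0}(\mathbf{z})=\delta_{\mathbf{z},\mathbf{z}_0}$, whence $v-\sum_{\mathbf{z}_0}v(\mathbf{z}_0)w_{\mathbf{z}_0}\in S_{k,0}'(\mathcal{T})$ for every $v\in S_{k,0}(\mathcal{T})$ and the inner inclusion follows. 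The difficulty is that a facet function is continuous only across its own facet, so membership in the domain of $\operatorname{ev}$ means solving all the constraints $[w]_G=0$; by the jump formula and the linear independence above these reduce to the edge-local equalities $c_{F_{\mathbf{z}}^a}=c_{F_{\mathbf{z}}^b}$, which tie together facets two apart in the fan around each edge and therefore impose a parity structure on the coefficient field $c\colon\mathcal{F}_\Omega\to\mathbb{R}$ rather than forcing it constant. Proving that this parity-constrained system, augmented by the boundary condition $w|_{\partial\Omega}=0$, has solution space of dimension exactly $|\mathcal{V}_\Omega|$ and realises every prescribed pattern of vertex values — equivalently, constructing $w_{\mathbf{z}_0}$ explicitly as a corrector propagating outward from $\omega_{\mathbf{z}_0}$ until absorbed by the boundary — is the step I expect to demand the most care.
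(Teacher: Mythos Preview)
Your treatment of the outer inclusion and of the even case is fine; the paper simply cites \cite[Theorem~33]{CDS} for even $k$, so your self-contained jump argument there is a bonus. Where you diverge from the paper is the odd case $k\geq3$, and in two respects.

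For directness, the paper takes a far shorter local route than your global parity analysis. Given $u\in S_{k,0}'(\mathcal{T})\cap B_k^{\operatorname{nc}}(\mathcal{T})$, pick a tetrahedron $K$ with a boundary facet $F_1$. Since $F_1\notin\mathcal{F}_\Omega$ there is no facet function associated with it, so $u|_K=\sum_{i=2}^4\alpha_iQ_k(1-2\lambda_{K,\mathbf{z}_i})$ has only three terms. Evaluating at $\mathbf{z}_2,\mathbf{z}_3,\mathbf{z}_4$ (where $u$ vanishes because $u\in S_{k,0}'$) yields a $3\times3$ system whose matrix is exactly your $J-(k+2)I$, with determinant $(1-k)(k+2)^2\neq0$; hence $u|_K=0$. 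Each inner facet of $K$ now has vanishing coefficient, so every adjacent tetrahedron is in the same position $K$ was, and a boundary-inward induction finishes. This uses precisely the linear-algebraic fact you isolated, but one tetrahedron at a time; the global system of jump constraints and its ``parity structure'' never enters. Your route is not wrong, but you leave it incomplete at exactly the point where the paper's argument is already done.

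On the inclusion $S_{k,0}(\mathcal{T})\subset\operatorname{CR}_{k,0}(\mathcal{T})$ for odd $k$, you are right that it is not automatic---the conforming summand in~(\ref{defCRcan}) is $S_{k,0}'(\mathcal{T})$, not $S_{k,0}(\mathcal{T})$---and your reduction to surjectivity of the vertex-evaluation map is the correct formulation. The paper's proof, however, addresses only directness and gives no argument for this inclusion; so here your proposal identifies a genuine omission rather than departing from the paper, though you do not close the gap either.
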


\begin{proof}
The proof that the sum in (\ref{defCRcan}) for even $k$ is direct can be found
in \cite[Theorem 33]{CDS}. For odd $k$, the proof in \cite[Theorem 33]{CDS}
can be adapted: For $k=1$, we have $S_{1,0}^{\prime}\left(
\mathcal{T}\right)  =\left\{  0\right\}  $ and the statement is trivial. 

It
remains to consider odd $k\geq3$. We need to show that $S_{k,0}^{\prime
}\left(  \mathcal{T}\right)  \cap B_{k}^{\operatorname{nc}}\left(
\mathcal{T}\right)  =\left\{  0\right\}  $. Assume that $u\in
S_{k,0}^{\prime}\left(  \mathcal{T}\right)  \cap B_{k}^{\operatorname{nc}%
}\left(  \mathcal{T}\right)  $. By definition,
$u\in 
C^{0}(\Omega)$ and
$u$ vanishes on the boundary. Moreover, $u(\mathbf{z})=0$ for all
$\mathbf{z}\in\mathcal{V}$.
We now consider a tetrahedron $K\in\mathcal{T}$ with facets $F_{i}$ (opposite
to the vertices $\mathbf{z}_{i}\in\mathcal{V}\left(  K\right)  $, $1\leq
i\leq4$) which has one facet that lies on the boundary, say 
$F_{1}\in\mathcal{F}_{\partial\Omega}$. This
implies that $u$ vanishes on $F_{1}$. Since $u\in B_{k}^{\operatorname{nc}%
}\left(  \mathcal{T}\right)  $ for odd $k$ we have%
\[
\left.  u\right\vert _{K}=\sum_{i=2}^{4}\alpha_{i}Q_{k}\left(  1-2\lambda
_{K,\mathbf{z}_{i}}\right)  \quad\text{for some }\alpha_{2},\alpha_{3}%
,\alpha_{4}\in\mathbb{R}.
\]
This leads to the system
\begin{align*}
0=u\left(  \mathbf{z}_{2}\right)   &  =\alpha_{1}Q_{k}\left(  -1\right)
+\alpha_{2}Q_{k}\left(  1\right)  +\alpha_{3}Q_{k}\left(  1\right)  ,\\
0=u\left(  \mathbf{z}_{3}\right)   &  =\alpha_{1}Q_{k}\left(  1\right)
+\alpha_{2}Q_{k}\left(  -1\right)  +\alpha_{3}Q_{k}\left(  1\right)  ,\\
0=u\left(  \mathbf{z}_{4}\right)   &  =\alpha_{1}Q_{k}\left(  1\right)
+\alpha_{2}Q_{k}\left(  1\right)  +\alpha_{3}Q_{k}\left(  -1\right)  ,
\end{align*}
since $u\left(  \mathbf{z}_{i}\right)  =0$ for $i=2,3,4$.  Moreover, we
have (for odd $k$)
\begin{align*}
Q_{k}\left(  \pm1\right)   &  =\frac{1}{k+1}\left(  L_{k+1}-L_{k}\right)
^{\prime}\left(  \pm1\right)  \overset{\text{(\ref{Lendpoints})}}{=}\frac
{1}{k+1}\left(  \left(  \pm1\right)  ^{k}\binom{k+2}{2}-\left(  \pm1\right)
^{k-1}\binom{k+1}{2}\right) \\
&  =\left\{
\begin{array}
[c]{ll}
1 & \text{for }+1,\\
-\left(  k+1\right)  & \text{for }-1
\end{array}
\right.
\end{align*}
and, in turn, $\boldsymbol{\alpha}
=\left(  \alpha_{2},\alpha_{3},\alpha_{4}\right)^\intercal  $ is the solution 
of
\[
\left[
\begin{array}
[c]{ccc}%
-\left(  k+1\right)  & 1 & 1\\
1 & -\left(  k+1\right)  & 1\\
1 & 1 & -\left(  k+1\right)
\end{array}
\right]
\boldsymbol{\alpha}
=\left(
\begin{array}
[c]{c}%
0\\
0\\
0
\end{array}
\right)  .
\]
The determinant of this matrix is $-\left(  k-1\right)  \left(  k+2\right)
^{2}$; since $k\geq3$ the matrix is regular and $\boldsymbol{\alpha}
=\left(  0,0,0\right)  $ follows so that $\left.  u\right\vert _{K}=0$. From
an induction argument, we conclude that $u=0$.%
\end{proof}

%%%%%%%%%%%%%%%%%%%%%%%%%%%%%%%%%%%%%%%%%%%%%%%%%%%%%%%%%%%%%%%%%%%%%%%%%%%%%%%%%%
\section{Inf-sup stability for the quadratic velocity space}
\label{InfSup2}
%%%%%%%%%%%%%%%%%%%%%%%%%%%%%%%%%%%%%%%%%%%%%%%%%%%%%%%%%%%%%%%%%%%%%%%%%%%%%%%%%%

We start this section with some remarks on the macroelement technique (see
\cite{Stenberg_macro}, \cite{Stenberg_marco_new}) which we employ for the
analysis of the inf-sup stability.

\begin{definition}
\label{DefNspace}A macroelement $\mathcal{M}$ is a connected set of elements
$K\in\mathcal{T}$. For a macroelement $\mathcal{M}$, the spaces
$N_{k,\mathcal{M}}^{\operatorname*{CR}}$ and $N_{k,\mathcal{M}}$ are the
orthogonal complements in $\mathbb{P}_{k-1}\left(  \mathcal{M}\right)  $ of
the images $\operatorname*{div}\left(  \mathbf{CR}_{k,0}\left(  \mathcal{M}%
\right)  \right)  $ and $\operatorname*{div}\mathbf{S}_{k,0}\left(
\mathcal{M}\right)  $:%
\begin{align*}
N_{k,\mathcal{M}}^{\operatorname*{CR}}  &  =\left\{  p\in\mathbb{P}%
_{k-1}\left(  \mathcal{M}\right)  \mid\forall\mathbf{v}\in\mathbf{CR}%
_{k,0}\left(  \mathcal{M}\right)  :\left(  p,\operatorname*{div}%
\mathbf{v}\right)  _{L^{2}(\operatorname*{dom}\mathcal{M})}=0\right\}  ,\\
N_{k,\mathcal{M}}  &  =\left\{  p\in\mathbb{P}_{k-1}\left(  \mathcal{M}%
\right)  \mid\forall\mathbf{v}\in\mathbf{S}_{k,0}\left(  \mathcal{M}\right)
:(p,\operatorname*{div}\mathbf{v})_{L^{2}(\operatorname*{dom}\mathcal{M}%
)}=0\right\}  .
\end{align*}
Non-zero elements in $N_{k,\mathcal{M}}\cap L_{0}^{2}\left(  \Omega\right)  $
are \emph{critical pressures} for $\mathcal{M}$.
\end{definition}

A direct consequence of \cite[Thm. 2.1]{Stenberg_marco_new} is the following
proposition for quadratic velocity spaces.

\begin{proposition}
\label{thm:macroelement} Let $\mathcal{T}$ be a regular finite element
simplicial mesh on a bounded polyhedral domain $\Omega\subset\mathbb{R}^{3}$.
Let $k=2$ and consider the macroelements consisting of one simplex, i.e.
$\mathcal{M}=\left\{  K\right\}  \subset\mathcal{T}$. If
\begin{equation}
\dim N_{2,\left\{  K\right\}  }^{\operatorname*{CR}}=1,\qquad\forall
K\in\mathcal{T}, \label{N2Kdim}
\end{equation}
then the basic Crouzeix-Raviart element for the Stokes problem (cf. Def.
\ref{DefCanCR}) is inf-sup stable.
\end{proposition}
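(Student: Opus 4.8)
The plan is to deduce the statement from Stenberg's macroelement theorem \cite[Thm. 2.1]{Stenberg_marco_new}, applied to the covering of $\mathcal{T}$ by the single-simplex macroelements $\mathcal{M}=\{K\}$. That theorem reduces the global inf-sup stability of the pair $(\mathbf{CR}_{2,0}(\mathcal{T}),\mathbb{P}_{1,0}(\mathcal{T}))$ to three ingredients: (i) the macroelements cover $\mathcal{T}$ and split into finitely many equivalence classes under affine maps; (ii) the local macroelement condition, namely that for each $\mathcal{M}=\{K\}$ the space $N_{2,\{K\}}^{\operatorname*{CR}}$ of Definition \ref{DefNspace} contains only constants; and (iii) a connectivity condition guaranteeing that the only globally admissible macroelement-wise pressure is the (excluded) global constant. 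First I would record that (i) is immediate: every $K\in\mathcal{T}$ is an affine image of the reference tetrahedron $\hat{K}$, the barycentric coordinates and hence $B_{2}^{\operatorname*{CR},K}$ are affine-invariant, and the rank of $\operatorname{div}(\mathbf{CR}_{2,0}(\{K\}))$ in $\mathbb{P}_{1}(K)$ is unchanged under the affine pullback; thus there is a single equivalence class and the resulting constants depend only on the shape-regularity of $\mathcal{T}$.

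Next I would check (ii). The key preliminary observation is that the constants always lie in $N_{2,\{K\}}^{\operatorname*{CR}}$: testing with $\mathbf{v}=B_{2}^{\operatorname*{CR},K}\mathbf{e}_{j}$ gives $\int_{K}\operatorname{div}\mathbf{v}=\int_{\partial K}B_{2}^{\operatorname*{CR},K}\,n_{j}$, which vanishes because by Theorem \ref{thm:CR-tetrahedron}(a) the function $B_{2}^{\operatorname*{CR},K}$ is $L^{2}(F)$-orthogonal to $\mathbb{P}_{1}(F)\ni 1$ on each facet $F$. Consequently the hypothesis $\dim N_{2,\{K\}}^{\operatorname*{CR}}=1$ is exactly the assertion that $N_{2,\{K\}}^{\operatorname*{CR}}$ equals the constants, equivalently that $\operatorname{div}(\mathbf{CR}_{2,0}(\{K\}))$ exhausts the mean-zero subspace of $\mathbb{P}_{1}(K)$. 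This is precisely the local macroelement condition, so (ii) holds under the assumption of the proposition.

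The substance of the argument, and the step I expect to be the main obstacle, is the passage from the local condition to the global one, i.e. condition (iii). Here the plan is to let $p\in\mathbb{P}_{1,0}(\mathcal{T})$ be a global spurious pressure, $(p,\operatorname{div}\mathbf{v})=0$ for all $\mathbf{v}\in\mathbf{CR}_{2,0}(\mathcal{T})$. Since the non-conforming generators $B_{2}^{\operatorname*{CR},K}$ are supported on the single simplex $K$, testing against $B_{2}^{\operatorname*{CR},K}\mathbf{e}_{j}$ localizes and forces $p|_{K}\in N_{2,\{K\}}^{\operatorname*{CR}}$; by (ii) each $p|_{K}$ is therefore constant. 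It then remains to exploit orthogonality against the continuous component $\mathbf{S}_{2,0}(\mathcal{T})\subset\mathbf{CR}_{2,0}(\mathcal{T})$: an integration by parts turns $(p,\operatorname{div}\mathbf{v})$ into a sum over interior facets of the jumps $[p]_{F}$ weighted by the normal fluxes $\int_{F}\mathbf{v}\cdot\mathbf{n}_{F}$, and the connectivity content of \cite[Thm. 2.1]{Stenberg_marco_new} is that these relations, propagated through the mesh, force all jumps to vanish, so that $p$ is a global constant and hence $p=0$ by the mean-zero normalization.

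Finally, with (i)--(iii) in place, the cited theorem yields a uniform inf-sup constant $\gamma>0$, the finiteness of the equivalence class guaranteeing independence of the mesh size. I want to stress that (iii) is the genuinely delicate point: the non-conforming bubbles only control the element-interior (piecewise-linear) part of the pressure, so the elimination of the remaining piecewise-constant modes must come entirely from the continuous velocities and the global connectivity of the facet-flux relations. This is the heart of the macroelement argument and must be verified rather than taken for granted.
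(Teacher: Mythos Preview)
Your proposal is correct and takes the same route as the paper: the paper does not give an independent proof of this proposition at all but simply states it as ``a direct consequence of \cite[Thm.~2.1]{Stenberg_marco_new}'', and you likewise reduce everything to Stenberg's macroelement theorem. Your write-up is in fact more explicit than the paper's, correctly identifying that the hypothesis \eqref{N2Kdim} furnishes the local null-space condition, that single-simplex macroelements form a single affine equivalence class, and that the remaining global control of piecewise-constant pressures (your point (iii)) is the extra hypothesis in Stenberg's theorem, which here amounts to the classical stability of $(\mathbf{S}_{2,0}(\mathcal{T}),\mathbb{P}_{0,0}(\mathcal{T}))$ in three dimensions.
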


In the remaining part of this section, we prove the inf-sup stability of
$\left(  \mathbf{CR}_{2,0}\left(  \mathcal{T}\right)  ,\mathbb{P}_{1,0}\left(
\mathcal{T}\right)  \right)  $ for the case $k=2$ by showing (\ref{N2Kdim}).
Using $P_{1}^{\left(  0,3\right)  }\left(  x\right)  
=\left(  5x-3\right)
/2$ and $P_{1}^{\left(  0,3\right)  }\left(  1-2\lambda_{K,\mathbf{z}}\right)
=1-5\lambda_{K,\mathbf{z}}$, it is easy to verify that the 
polynomials
\[
P_{1}^{\left(  0,3\right)  }\left(  1-2\lambda_{K,\mathbf{z}}\right)
,\quad\mathbf{z}\in\mathcal{V}\left(  K\right)
\]
form a basis for $\mathbb{P}_{k-1}(K)=\mathbb{P}_{1}(K)$.

The following lemma will be used in Theorem \ref{thm:stabilityk2} and can 
be 
easily proved
using a transformation to the reference tetrahedron and the orthogonality
properties of $P_{1}^{\left(  0,3\right)  }(x)$ (see Lemma 
\ref{LemOrthoMain}).

\begin{lemma}
\label{lem:orthog}For $\mathbf{y},\mathbf{z}\in\mathcal{V}\left(  K\right)  $,
it holds
\begin{equation}
\int_{K}P_{1}^{\left(  0,3\right)  }\left(  1-2\lambda_{K,\mathbf{y}}\right)
\lambda_{K,\mathbf{z}}=\left\{
\begin{array}
[c]{ll}%
-\frac{\left\vert K\right\vert }{4} & \mathbf{y}=\mathbf{z},\\
0 & \mathbf{y}\neq\mathbf{z}.
\end{array}
\right.  \label{P0,3prop}%
\end{equation}

\end{lemma}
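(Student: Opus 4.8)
The plan is to treat the two cases separately, since the off-diagonal case is essentially Lemma \ref{LemOrthoMain} specialized to $k=1$, while the diagonal case reduces to a single explicit one-dimensional integral.

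First, for $\mathbf{y}\neq\mathbf{z}$, I would invoke Lemma \ref{LemOrthoMain} with $k=1$: it states that $P_{1}^{(0,3)}(1-2\lambda_{K,\mathbf{y}})$ is orthogonal to $\mathbb{P}_{0}(K)$ with respect to the weight $\lambda_{K,\mathbf{z}}$ for every $\mathbf{z}\in\mathcal{V}(K)\setminus\{\mathbf{y}\}$. Choosing the constant test function $q\equiv 1\in\mathbb{P}_{0}(K)$ yields
\[
\int_{K}P_{1}^{(0,3)}\left(1-2\lambda_{K,\mathbf{y}}\right)\lambda_{K,\mathbf{z}}=0,
\]
which is exactly the claimed value in the off-diagonal case; no further computation is needed here.

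For the diagonal case $\mathbf{y}=\mathbf{z}$, I would use the explicit representation $P_{1}^{(0,3)}(1-2\lambda_{K,\mathbf{y}})=1-5\lambda_{K,\mathbf{y}}$ recorded just before the lemma, so that
\[
\int_{K}P_{1}^{(0,3)}\left(1-2\lambda_{K,\mathbf{y}}\right)\lambda_{K,\mathbf{y}}=\int_{K}\lambda_{K,\mathbf{y}}-5\int_{K}\lambda_{K,\mathbf{y}}^{2}.
\]
I would then insert the standard barycentric monomial integration formula on a tetrahedron, $\int_{K}\lambda_{K,\mathbf{y}}^{a}=\frac{6\,a!}{(a+3)!}\,\left\vert K\right\vert$, which gives $\int_{K}\lambda_{K,\mathbf{y}}=\left\vert K\right\vert/4$ and $\int_{K}\lambda_{K,\mathbf{y}}^{2}=\left\vert K\right\vert/10$. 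Combining, $\left\vert K\right\vert/4-5\cdot\left\vert K\right\vert/10=-\left\vert K\right\vert/4$, as asserted. Alternatively, following the hint, one can pull back by an affine bijection $\chi_{K}:\hat{K}\to K$ with $\lambda_{K,\mathbf{y}}\circ\chi_{K}=x_{1}$, integrate out $x_{2},x_{3}$ to produce the factor $(1-x_{1})^{2}/2$, and evaluate $\frac{1}{2}\int_{0}^{1}(1-5x_{1})\,x_{1}\,(1-x_{1})^{2}\,dx_{1}=-\tfrac{1}{24}$, which after multiplication by $\left\vert K\right\vert/\left\vert\hat{K}\right\vert=6\left\vert K\right\vert$ again gives $-\left\vert K\right\vert/4$.

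I do not expect any genuine obstacle: the lemma is elementary, and the only point requiring care is the bookkeeping of constants, namely the normalization $\left\vert\hat{K}\right\vert=1/6$ and the exact form of the barycentric integration formula, so that the final value comes out as $-\left\vert K\right\vert/4$ rather than a misnormalized multiple of it.
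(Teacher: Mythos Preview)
Your proposal is correct and follows exactly the approach the paper indicates: the paper does not spell out a proof but states that the lemma ``can be easily proved using a transformation to the reference tetrahedron and the orthogonality properties of $P_{1}^{(0,3)}(x)$ (see Lemma~\ref{LemOrthoMain}),'' which is precisely what you do---Lemma~\ref{LemOrthoMain} for $\mathbf{y}\neq\mathbf{z}$ and a direct one-dimensional computation (via pullback or the barycentric monomial formula) for $\mathbf{y}=\mathbf{z}$.
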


The goal of this section is to prove the stability of the pair $\left(
\mathbf{CR}_{2,0}\left(  \mathcal{T}\right)  ,\mathbb{P}_{1,0}\left(
\mathcal{T}\right)  \right)  $ using the macroelement technique (Proposition
\ref{thm:macroelement}), where each macroelement consists of a single
tetrahedron $K\in\mathcal{T}$.

\begin{theorem}\label{thm:stabilityk2}
The basic Crouzeix-Raviart Stokes element for \(k = 2\) is inf-sup 
stable 
with an inf-sup constant \(\gamma>0\) independent of the mesh width \(h\).
\end{theorem}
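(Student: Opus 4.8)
The plan is to reduce global inf-sup stability to the local, element-wise dimension count (\ref{N2Kdim}) via Proposition \ref{thm:macroelement}, and then to verify that count by an explicit computation on a single tetrahedron. By that proposition it suffices to prove $\dim N_{2,\{K\}}^{\operatorname{CR}} = 1$ for every $K\in\mathcal{T}$. First I would pin down the local velocity space $\mathbf{CR}_{2,0}(\{K\})$. On the single-element macroelement $\mathcal{M} = \{K\}$, any continuous quadratic vanishing on $\partial K$ must vanish identically, since all ten $\mathbb{P}_2$-Lagrange nodes (four vertices, six edge midpoints) lie on $\partial K$; hence $\mathbf{S}_{2,0}(\{K\}) = \{\mathbf{0}\}$ and $\mathbf{CR}_{2,0}(\{K\}) = \operatorname{span}\{B\,\mathbf{e}_1, B\,\mathbf{e}_2, B\,\mathbf{e}_3\}$, where $B := B_2^{\operatorname{CR},K}$ and $\mathbf{e}_1,\mathbf{e}_2,\mathbf{e}_3$ is the standard basis of $\mathbb{R}^3$. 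Since $\operatorname{div}(B\,\mathbf{e}_i) = \partial_i B$, the image is $\operatorname{div}\mathbf{CR}_{2,0}(\{K\}) = \operatorname{span}\{\partial_1 B,\partial_2 B,\partial_3 B\}\subset\mathbb{P}_1(K)$, and as $N_{2,\{K\}}^{\operatorname{CR}}$ is its $L^2(K)$-orthogonal complement in $\mathbb{P}_1(K)$ (of dimension $4$), we get $\dim N_{2,\{K\}}^{\operatorname{CR}} = 4 - \dim\operatorname{span}\{\partial_i B\}$. The theorem therefore reduces to showing that the three components of $\nabla B$ are linearly independent.

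To prove this I would make $B$ explicit. Evaluating (\ref{defQk2}) gives $Q_2(1-2\lambda) = 10\lambda^2 - 8\lambda + 1$, and summing (\ref{def:CRtetrahedron}) over the four vertices while using $\sum_{\mathbf{z}}\lambda_{K,\mathbf{z}} = 1$ collapses $B$ to the compact form $B = 10\sum_{\mathbf{z}\in\mathcal{V}(K)}\lambda_{K,\mathbf{z}}^2 - 5$. Then $\nabla B = 20\sum_{\mathbf{z}}\lambda_{K,\mathbf{z}}\,\nabla\lambda_{K,\mathbf{z}}$ has the constant Hessian $\nabla^2 B = 20\sum_{\mathbf{z}}(\nabla\lambda_{K,\mathbf{z}})(\nabla\lambda_{K,\mathbf{z}})^{\intercal}$, which is symmetric positive definite: $\mathbf{a}^{\intercal}(\nabla^2 B)\,\mathbf{a} = 20\sum_{\mathbf{z}}(\nabla\lambda_{K,\mathbf{z}}\cdot\mathbf{a})^2$ vanishes only when $\mathbf{a}$ is orthogonal to all four gradients $\nabla\lambda_{K,\mathbf{z}}$, which span $\mathbb{R}^3$. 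If some nonzero $\mathbf{a}\in\mathbb{R}^3$ gave $\mathbf{a}\cdot\nabla B\equiv 0$, its gradient $(\nabla^2 B)\mathbf{a}$ would vanish, forcing $\mathbf{a} = \mathbf{0}$; hence $\partial_1 B,\partial_2 B,\partial_3 B$ are linearly independent, $\dim\operatorname{span}\{\partial_i B\} = 3$, and $\dim N_{2,\{K\}}^{\operatorname{CR}} = 1$. Equivalently, transforming to the reference tetrahedron turns the partials into $20\bigl(2x_i + \sum_{j\ne i}x_j - 1\bigr)$, whose independence is read off a $3\times 4$ coefficient matrix of rank $3$.

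The conceptual steps are all short, so the step I would treat most carefully is the identification of $\mathbf{CR}_{2,0}(\{K\})$---in particular the fact that $\mathbf{S}_{2,0}(\{K\}) = \{\mathbf{0}\}$, which is what makes the local velocity space exactly the three-dimensional span of the $B\,\mathbf{e}_i$; everything else is then forced by the rigid structure $B = 10\sum_{\mathbf{z}}\lambda_{K,\mathbf{z}}^2 - 5$. As a sanity check consistent with the macroelement mechanism, note that $B$ is $L^2(F)$-orthogonal to constants on each facet $F$ by Theorem \ref{thm:CR-tetrahedron}(a), so the constant pressure obeys $\int_K\partial_i B = \int_{\partial K}B\,n_i = 0$ and hence lies in $N_{2,\{K\}}^{\operatorname{CR}}$; together with the dimension count this identifies $N_{2,\{K\}}^{\operatorname{CR}}$ with the constants, which are annihilated by the global mean-zero constraint on $M_h$---exactly the situation covered by Proposition \ref{thm:macroelement}.
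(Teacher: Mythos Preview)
Your proof is correct and follows the same global strategy as the paper---reduce to Proposition~\ref{thm:macroelement} and verify $\dim N_{2,\{K\}}^{\operatorname{CR}}=1$ by showing that the three functions $\operatorname{div}(B_2^{\operatorname*{CR},K}\mathbf{t})$, $\mathbf{t}\in\mathbb{R}^3$, span a three-dimensional subspace of $\mathbb{P}_1(K)$---but the local verification is carried out differently. The paper selects the Jacobi basis $P_1^{(0,3)}(1-2\lambda_{K,\mathbf{y}})$ for $\mathbb{P}_1(K)$, invokes Lemma~\ref{lem:orthog}, and computes the $3\times4$ Gram matrix explicitly to read off its rank. You instead collapse $B_2^{\operatorname*{CR},K}$ to the closed form $10\sum_{\mathbf{z}}\lambda_{K,\mathbf{z}}^2-5$ and argue that the constant Hessian $20\sum_{\mathbf{z}}(\nabla\lambda_{K,\mathbf{z}})(\nabla\lambda_{K,\mathbf{z}})^{\intercal}$ is positive definite, which forces the three partials to be independent. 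Your route is shorter and entirely elementary for $k=2$, bypassing the Jacobi-polynomial apparatus; the paper's route, on the other hand, exercises exactly the orthogonality machinery (Lemma~\ref{LemOrthoMain}, Lemma~\ref{lem:orthog}) that is reused for general $k$ in Sections~\ref{SecCritConf}--\ref{SecCRstab}, so it pays dividends later even though it is heavier here.
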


\begin{proof}

By a pullback to the reference element, it is straightforward to verify that
for $k=2$ the Crouzeix Raviart basis functions for a tetrahedron satisfy%
\[
B_{2}^{\operatorname*{CR},K}=\frac{5}{3}\left(  \sum_{\mathbf{y}\in
\mathcal{V}\left(  K\right)  }L_{2}\left(  1-2\lambda_{K,\mathbf{y}}\right)
-1\right)  .
\]
By definition, 
$B_{2}^{\operatorname*{CR},K}\mathbf{t}\in\mathbf{CR}_{k,0}%
(\mathcal{T})$ for any constant vector $\mathbf{t}\in\mathbb{R}^{3}$. We will
show that the space
\[
N_{2,K}^{\operatorname*{CR}}=\left\{  p\in\mathbb{P}_{1}\left(  K\right)
~\Big|~\forall\mathbf{t}\in\mathbb{R}^{3}:\left(  
p,\operatorname*{div}%
B_{2}^{\operatorname*{CR},K}\mathbf{t}\right)  _{L^{2}(K)}=0\right\}
\]
is one-dimensional. Using the relation (see \cite[8.9.15]{NIST:DLMF})%
\[
L_{n}^{\prime}=\frac{n+1}{2}P_{n-1}^{\left(  1,1\right)  },\qquad\forall
n\in\mathbb{N}_{0},
\]
we get for $\mathbf{y}\in\mathcal{V}\left(  K\right)  $%
\begin{align}
\int_{K}P_{1}^{\left(  0,3\right)  }\left(  1-2\lambda_{K,\mathbf{y}}\right)
&\operatorname{div}\left(  B_{2}^{\operatorname*{CR},K}\mathbf{t}\right) 
=-5\sum_{\mathbf{z}\in\mathcal{V}\left(  K\right)  }\partial_{\mathbf{t}%
}\lambda_{K,\mathbf{z}}\int_{K}P_{1}^{\left(  0,3\right)  }\left(
1-2\lambda_{K,\mathbf{y}}\right)  P_{1}^{\left(  1,1\right)  }\left(
1-2\lambda_{K,\mathbf{z}}\right)  . \label{pki1}%
\end{align}
Let $\chi_{K}:\hat{K}\rightarrow K$ denote an affine pullback and
$\mathbf{\hat{y}}:=\chi_{K}^{-1}\left(  \mathbf{y}\right)  $, $\mathbf{\hat
{z}}:=\chi_{K}^{-1}\left(  \mathbf{z}\right)  $, and $\lambda_{\mathbf{\hat
{z}}}:=\lambda_{K,\mathbf{z}}\circ\chi_{K}$. Then,%
\begin{align}
\begin{split}
\int_{K}P_{1}^{\left(  0,3\right)  }\left(  1-2\lambda_{K,\mathbf{y}}\right)
P_{1}^{\left(  1,1\right)  }\left(  1-2\lambda_{K,\mathbf{z}}\right)   &
=\frac{\left\vert K\right\vert }{\left\vert \hat{K}\right\vert }\int_{\hat{K}%
}P_{1}^{\left(  0,3\right)  }\left(  1-2\lambda_{\mathbf{\hat{y}}}\right)
P_{1}^{\left(  1,1\right)  }\left(  1-2\lambda_{\mathbf{\hat{z}}}\right)
\\
&  =6\left\vert K\right\vert \int_{\hat{K}}P_{1}^{\left(  0,3\right)  }\left(
1-2\lambda_{\mathbf{\hat{y}}}\right)  P_{1}^{\left(  1,1\right)  }\left(
1-2\lambda_{\mathbf{\hat{z}}}\right)  .\label{pki2}
\end{split}
\end{align}
For a tetrahedron $K$, we fix a vertex $\mathbf{p}\in\mathcal{V}\left(
K\right)  $ and set $\mathbf{t}_{\mathbf{v}}:=\mathbf{v}-\mathbf{p}$,
$\mathbf{v}\in\mathcal{V}\left(  K\right)  \backslash\left\{  \mathbf{p}%
\right\}  $. Then, it is easy to verify that%
\[
\partial_{\mathbf{t}_{\mathbf{v}}}\lambda_{K,\mathbf{z}}=\left\{
\begin{array}
[c]{ll}%
-1 & \mathbf{z}=\mathbf{p},\\
\delta_{\mathbf{v},\mathbf{z}} & \mathbf{z}\in\mathcal{V}\left(  K\right)
\backslash\left\{  \mathbf{p}\right\},
\end{array}
\right.  \quad\text{for all }\mathbf{v}\in\mathcal{V}\left(  K\right)
\backslash\left\{  \mathbf{p}\right\}  .
\]
We combine this with (\ref{pki1}), (\ref{pki2}), and obtain for any
$\mathbf{v}\in\mathcal{V}\left(  K\right)  \backslash\left\{  \mathbf{p}%
\right\}  $%
\begin{align*}
  \int_{K}P_{1}^{\left(  0,3\right)  }\left(  1-2\lambda_{K,\mathbf{y}%
}\right) & \operatorname{div}\left(  B_{2}^{\operatorname*{CR},K}%
\mathbf{t}_{\mathbf{v}}\right) 
=-30\left\vert K\right\vert\!\!\sum
_{\mathbf{z}\in\mathcal{V}\left(  K\right)  }\partial_{\mathbf{t}_{\mathbf{v}%
}}\lambda_{K,\mathbf{z}}\int_{\hat{K}}P_{1}^{\left(  0,3\right)  }\left(
1-2\lambda_{\mathbf{\hat{y}}}\right)  P_{1}^{\left(  1,1\right)  }\left(
1-2\lambda_{\mathbf{\hat{z}}}\right) \\
& =30\left\vert K\right\vert \left(  \int_{\hat{K}}P_{1}^{\left(
0,3\right)  }\left(  1-2\lambda_{\mathbf{\hat{y}}}\right)  \left(
P_{1}^{\left(  1,1\right)  }\left(  1-2\lambda_{\mathbf{\hat{p}}}\right)
-P_{1}^{\left(  1,1\right)  }\left(  1-2\lambda_{\mathbf{\hat{v}}}\right)
\right)  \right)  .
\end{align*}
The difference in the integrand can be simplified by using
$P_{1}^{\left(  1,1\right)  }\left(  x\right)  =2x$:%
\[
P_{1}^{\left(  1,1\right)  }\left(  1-2\lambda_{\mathbf{\hat{p}}}\right)
-P_{1}^{\left(  1,1\right)  }\left(  1-2\lambda_{\mathbf{\hat{v}}}\right)
=4\left(  \lambda_{\mathbf{\hat{v}}}-\lambda_{\mathbf{\hat{p}}}\right)  .
\]
Using Lemma \ref{lem:orthog}, we have for any $\mathbf{v}\in\mathcal{V}\left(
K\right)  \backslash\left\{  \mathbf{p}\right\}  $%
\begin{align*}
\int_{K}P_{1}^{\left(  0,3\right)  }\left(  1-2\lambda_{K,\mathbf{y}}\right)
\operatorname{div}(B_{2}^{\operatorname*{CR},K}\mathbf{t}_{\mathbf{v}})  &
=120\left\vert K\right\vert \int_{\hat{K}}P_{1}^{\left(  0,3\right)  }\left(
1-2\lambda_{\mathbf{\hat{y}}}\right)  \left(  \lambda_{\mathbf{\hat{v}}%
}-\lambda_{\mathbf{\hat{p}}}\right) \\
=  &
\begin{cases}
-120\left\vert K\right\vert \int_{\hat{K}}P_{1}^{\left(  0,3\right)  }\left(
1-2\lambda_{\mathbf{\hat{y}}}\right)  \lambda_{\mathbf{\hat{y}}} &
\mathbf{p}=\mathbf{y},\\
120\left\vert K\right\vert \int_{\hat{K}}P_{1}^{\left(  0,3\right)  }\left(
1-2\lambda_{\mathbf{\hat{y}}}\right)  \lambda_{\mathbf{\hat{y}}} &
\mathbf{p}\neq\mathbf{y\quad}\text{and\quad}\mathbf{v}=\mathbf{y},\\
0 & \text{otherwise}%
\end{cases}
\\
\overset{\text{(\ref{P0,3prop})}}{=}  &
\begin{cases}
5\left\vert K\right\vert  & \mathbf{p}=\mathbf{y},\\
-5\left\vert K\right\vert  & \mathbf{p}\neq\mathbf{y\quad}\text{and\quad
}\mathbf{v}=\mathbf{y},\\
0 & \text{otherwise.}%
\end{cases}
\end{align*}
This implies that the matrix $\left(  \left(  \operatorname*{div}%
B_{2}^{\operatorname*{CR},K}\mathbf{t}_{\mathbf{v}},P_{1}^{\left(  0,3\right)
}\left(  1-2\lambda_{K,\mathbf{y}}\right)  \right)  _{L^{2}\left(  K\right)
}\right)  _{\substack{\mathbf{v}\in\mathcal{V}\left(  K\right)  \backslash
\left\{  \mathbf{p}\right\}  \\\mathbf{y}\in\mathcal{V}\left(  K\right)  }}$
is given by%
\[
5\left\vert K\right\vert
\begin{pmatrix}
1 & -1 & 0 & 0\\
1 & 0 & -1 & 0\\
1 & 0 & 0 & -1
\end{pmatrix}
.
\]
This matrix has full rank so that $N_{2,\{K\}}^{\operatorname*{CR}}$ is
one-dimensional, containing only the constant pressures. Inf-sup stability 
follows by the macroelement technique (Proposition \ref{thm:macroelement}).
\end{proof}

\begin{remark}
If $\mathcal{F}_{\Omega}\neq\emptyset$, then $\mathbf{CR}_{2,0}\left(
\mathcal{T}\right)  \subsetneq\mathbf{CR}_{2,0}^{\max}\left(  \mathcal{T}%
\right)  $ since $\mathbf{CR}_{2,0}^{\max}\left(  \mathcal{T}\right)  $ also
contains one non-conforming Crouzeix-Raviart functions associated with each
facet $F\in\mathcal{F}_{\Omega}$ (see Remark \ref{Remmaxcan}).

We have shown that it suffices to enrich $\mathbf{S}_{2,0}(\mathcal{T})$ with 
certain local
Crouzeix-Raviart functions (one scalar one per tetrahedron multiplied by three
linearly independent constant vectors in $\mathbb{R}^{3}$) in order to
stabilize the pair $\left(  \mathbf{S}_{2,0}(\mathcal{T}),\mathbb{P}%
_{1,0}\left(  \mathcal{T}\right)  \right)  $.
\end{remark}
%%%%%%%%%%%%%%%%%%%%%%%%%%%%%%%%%%%%%%%%%%%%%%%%%%%%%%%%%%%%%%%%%%%%%%%%%%%%%%%%%%
\section{Critical pressures for \(\left(  \mathbf{S}_{k,0}\left(
\mathcal{T}\right)  ,\mathbb{P}_{k-1,0}\left(  \mathcal{T}\right)  \right)  \)
in 3D}
\label{SecCritConf}
%%%%%%%%%%%%%%%%%%%%%%%%%%%%%%%%%%%%%%%%%%%%%%%%%%%%%%%%%%%%%%%%%%%%%%%%%%%%%%%%%%

In two dimensions, the existence of critical
pressures (or also called 
\emph{spurious} pressures in the literature) is 
related to the 
existence of critical vertices
\cite{ScottVogelius}. In this case, a vertex $\mathbf{z}$ is
\textit{critical} if all edges connected to $\mathbf{z}$ lie on at most two
straight lines; in \cite{ScottVogelius} the following dimension formula is
proved for $k\geq4$ and two-dimensional triangulations (recall Def.
\ref{DefNspace})
\begin{equation}
\dim N_{k,\mathcal{T}}=1+\#\left\{  \mathbf{z}^{\prime}\in\mathcal{V}_{\Omega
}\mid\mathbf{z}^{\prime}\text{ is a critical vertex in }\mathcal{T}\right\}
. \label{eq:dimformula}%
\end{equation}
In \cite[Def. 6.3]{neilan2015discrete}, \textit{critical edges} for conforming
simplicial meshes in dimension $3$ are introduced in analogy to
\textit{critical vertices} in two dimensions.

\begin{definition}
\label{CritEdge}Let $\mathcal{T}$ be a conforming simplicial finite element
mesh and let $\mathcal{E}$ denote the set of edges in $\mathcal{T}$. An edge
$E\in\mathcal{E}$ is \emph{critical in }$\mathcal{T}$ if all facets
$F\in\mathcal{F}$ with $E\subset F$ lie in at most two flat planes.
\end{definition}

An analogous dimension formula to (\ref{eq:dimformula}) is not known for
conforming simplicial meshes in dimension $d\geq3$. In this section, we
discuss the existence of critical pressures in the presence of \emph{critical
edges} (cf. Def. \ref{CritEdge}). In Section \ref{SecCRstab}, we then prove
that the non-conforming Crouzeix-Raviart functions remove these critical 
pressures.

\begin{remark}
At the current stage of research, a complete description of all critical
pressures is still open and, hence, our result can be interpreted in the way
that an inf-sup stable Crouzeix-Raviart space should contain the basic
Crouzeix-Raviart space while the inf-sup stability of basic
Crouzeix-Raviart elements for the Stokes equation is still not fully understood.
\end{remark}

There are exactly two types of critical edges that can 
appear 
in a
tetrahedral mesh.

\begin{enumerate}
\item[a.] The critical edge $E\in\mathcal{E}_{\Omega}$ is an inner edge of
$\mathcal{T}$. In this case the edge patch $\omega_{E}$ consists of exactly
four tetrahedra (see Fig. \ref{fig:4patch} for an illustration).
\begin{figure}[ptb]
\begin{center}
	\includegraphics[scale=0.7]{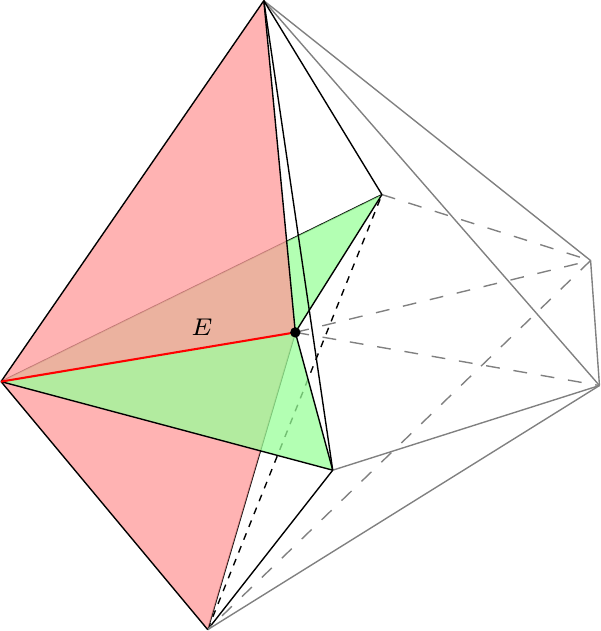}
\end{center}
\caption{Inner critical edge $E$ in a nodal patch. Red 
and green 
coloured facets lie in one plane, respectively.}
\label{fig:4patch}%
\end{figure}

\item[b.] The critical edge $E\in\mathcal{E}_{\partial\Omega}$ is an outer
edge of $\mathcal{T}$ and the edge patch $\omega_{E}$ consists of either 
one, two
or three tetrahedra (see Fig. \ref{fig:edgepatch} for an illustration).
\begin{figure}[ptb]
\begin{center}
	\includegraphics[scale=0.7]{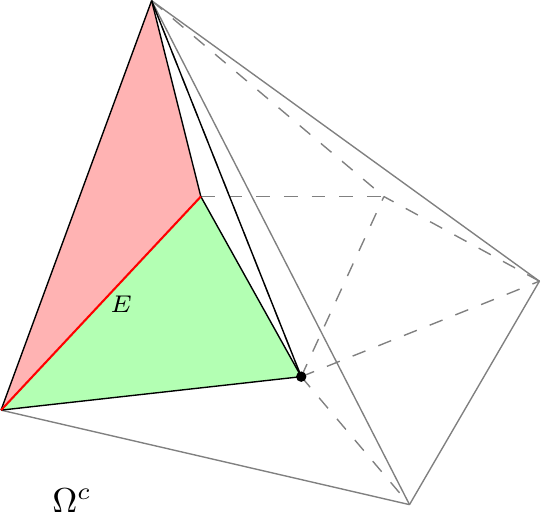}\qquad
	\includegraphics[scale=0.7]{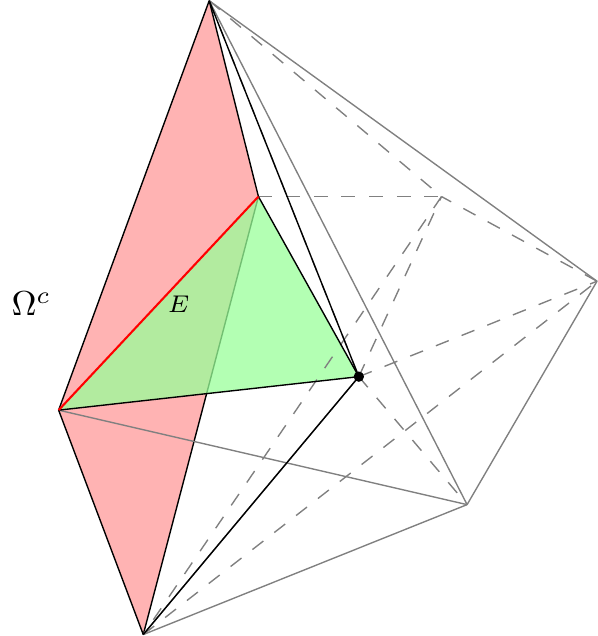}\qquad
	\includegraphics[scale=0.7]{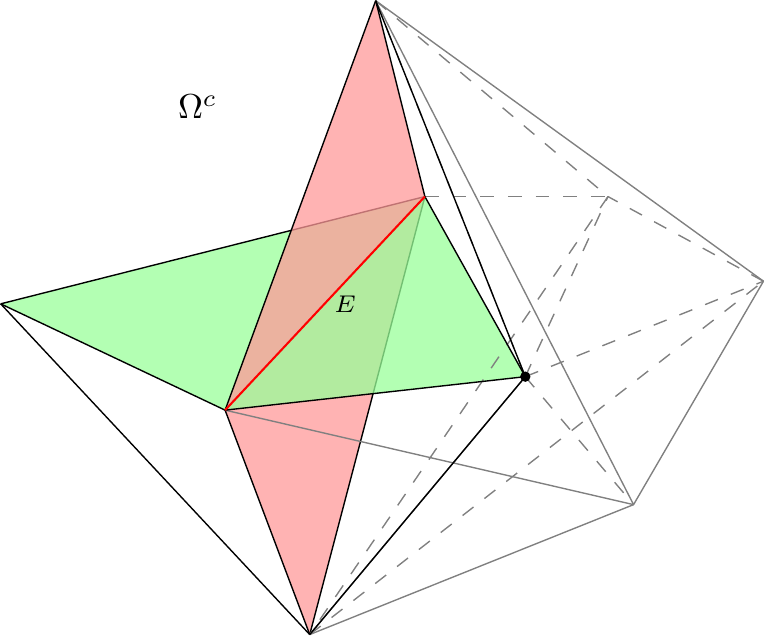}
\end{center}
\caption{The three cases of possible critical edges $E$ at the boundary of 
a 
mesh. The edge patch \(\omega_E\) consist of one, two or three 
tetrahedra (from left to right). Red and green coloured
facets lie in one plane, respectively.}%
\label{fig:edgepatch}%
\end{figure}
\end{enumerate}

We now study the existence of critical pressures for those two
types of critical edges.

\subsection{Critical pressure for inner critical 
edges}\label{sect:criticalpressure-inner}

Let $E\in\mathcal{E}_{\Omega}$ be a critical edge of the mesh $\mathcal{T}$.
In the following, we construct pressures $p_{k-1}^{E,\mathbf{p}}%
\in\mathbb{P}_{k-1,0}(\mathcal{T})$, $\mathbf{p}\in\mathcal{V}\left(
E\right)  $, supported on $\omega_{E}$ such that
\begin{equation}
\left(  p_{k-1}^{E,\mathbf{p}},\operatorname{div}\mathbf{v}\right)
_{L^{2}\left(  \Omega\right)  }=0\quad\forall\mathbf{v}\in\mathbf{S}%
_{k,0}\left(  \mathcal{T}\right)  . \label{zerocond}%
\end{equation}

\begin{notation}
\label{NotE}For a set $M\subset\mathbb{R}^{3}$, we denote its convex hull by
$\left[  M\right]  $. Let $E\in\mathcal{E}_{\Omega}$ be a critical 
edge. This implies
that $E$ is shared by exactly four tetrahedra $K_{j}\in\mathcal{T}$, $1\leq 
j\leq4$ (see Figure \ref{fig:4patch}). We employ a cyclic numbering convention 
and
denote $K_{4+1}:=K_{1}$ and $K_{1-1}:=K_{4}.$ We assume the numbering
convention that $K_{i}$, $K_{i+1}$ share a facet (denoted by $F_{i}$) and
$F_{1},F_{3}$ lie in one plane and $F_{2},F_{4}$ lie in one other plane.
\end{notation}

\begin{definition}
[Critical pressure for inner edges]\label{PressCritic}Assume the setting as in
Notation \ref{NotE} and let $k\geq1$. Assume that there is some critical edge
$E\in\mathcal{E}_{\Omega}$. For $\mathbf{p}\in\mathcal{V}\left(  E\right)  $,
the \emph{critical pressure} $p_{k-1}^{E,\mathbf{p}}\in\mathbb{P}%
_{k-1,0}\left(  \mathcal{T}\right)  $ is given by%
\[
p_{k-1}^{E,\mathbf{p}}:=\sum_{i=1}^{4}\frac{\left(  -1\right)  ^{i}%
}{\left\vert K_{i}\right\vert }\chi_{K_{i}}P_{k-1}^{\left(  0,3\right)
}\left(  1-2\lambda_{K_{i},\mathbf{p}}\right)  ,
\]
where $\chi_{K_{i}}$ denotes the characteristic function for $K_{i}$.
\end{definition}

Next, we verify that (\ref{zerocond}) is satisfied. The restriction of
$\mathbf{S}_{k,0}\left(  \mathcal{T}\right)  $ to $K\in\mathcal{T}$ belongs to
the span of
\[%
 \boldsymbol{\lambda}%
_{K}^{%
\boldsymbol{\mu}%
}\mathbf{w}_{\ell}\qquad\left\{
\begin{array}
[c]{l}%
\text{for }\mathbf{w}_{\ell}\in\mathbb{R}^{3},\ 1\leq\ell\leq
3,\ \text{linearly independent,}\quad
\\
\boldsymbol{\mu}%
=\left(  \mu_{\mathbf{v}}\right)  _{\mathbf{v}\in\mathcal{V}\left(  K\right)
}\in\mathbb{N}_{0}^{4}\text{, }\left\vert
\boldsymbol{\mu}%
\right\vert =k.
\end{array}
\right.
\]
For $\mathbf{p}\in\mathcal{V}\left(  E\right)  $, we have for $K_{i}$ as in
Notation \ref{NotE}%
\begin{align}
&  \int_{K_{i}}p_{k-1}^{E,\mathbf{p}}\operatorname{div}\left(
 \boldsymbol{\lambda}%
_{K_{i}}^{%
\boldsymbol{\mu}%
}\mathbf{w}_{\ell}\right)  =\int_{K_{i}}p_{k-1}^{E,\mathbf{p}}\partial
_{\mathbf{w}_{\ell}}%
 \boldsymbol{\lambda}%
_{K_{i}}^{%
\boldsymbol{\mu}%
}\nonumber  =\frac{\left(  -1\right)  ^{i}}{\left\vert K_{i}\right\vert 
}\int_{K_{i}%
}P_{k-1}^{\left(  0,3\right)  }\left(  1-2\lambda_{K_{i},\mathbf{p}}\right)
\partial_{\mathbf{w}_{\ell}}%
 \boldsymbol{\lambda}%
_{K_{i}}^{%
\boldsymbol{\mu}%
}\nonumber\\
&  =\frac{\left(  -1\right)  ^{i}}{\left\vert K_{i}\right\vert }%
\sum_{\mathbf{v}\in\mathcal{V}\left(  K_{i}\right)  }\mu_{\mathbf{v}}%
\partial_{\mathbf{w}_{\ell}}\lambda_{K_{i},\mathbf{v}}\int_{K_{i}}%
P_{k-1}^{\left(  0,3\right)  }\left(  1-2\lambda_{K_{i},\mathbf{p}}\right)
 \boldsymbol{\lambda}%
_{K_{i}}^{%
\boldsymbol{\mu}%
-\mathbf{e}_{\mathbf{v}}^{K_{i}}}\nonumber\\
&  =\frac{\left(  -1\right)  ^{i}}{\left\vert K_{i}\right\vert }\left(
\int_{K_{i}}P_{k-1}^{\left(  0,3\right)  }(1-2\lambda_{K_{i},\mathbf{p}%
})\lambda_{K_{i},\mathbf{p}}^{k-1}\right)  \times\left\{
\begin{array}
[c]{ll}%
k\partial_{\mathbf{w}_{\ell}}\lambda_{K_{i},\mathbf{p}} &
\boldsymbol{\mu}%
=k\mathbf{e}_{p}^{K_{i}},\\
\partial_{\mathbf{w}_{\ell}}\lambda_{K_{i},\mathbf{y}} & \left\{
\begin{array}
[c]{l}%
\boldsymbol{\mu}%
=\left(  k-1\right)  \mathbf{e}_{p}^{K_{i}}+\mathbf{e}_{\mathbf{y}}^{K_{i}},\\
\mathbf{y}\in\mathcal{V}\left(  K_{i}\right)  \backslash\left\{
\mathbf{p}\right\}  ,
\end{array}
\right. \\
0 & \text{otherwise,}%
\end{array}
\right.  \label{eq:intTi}%
\end{align}
where the last equality follows by the orthogonality of 
$P^{\left(  0,3\right)
}(1-2\lambda_{K_{i},\mathbf{p}})$ with respect to the weights $\lambda
_{K_{i},\mathbf{y}}$, $\mathbf{y}\in\mathcal{V}\left(  K\right)
\backslash\left\{  \mathbf{p}\right\}  $ (cf. Lem. \ref{LemOrthoMain}).

\begin{remark}
\label{RemTypes}We distinguish between four types of basis functions of
$\mathbf{S}_{k,0}\left(  \mathcal{T}\right)  $:

\begin{enumerate}[(a)]
\item basis function supported on one tetrahedron $K\in\mathcal{T}$, 
\label{casea}
\item basis functions supported on $\omega_{F}$ for some facet
$F\in\mathcal{F}_{\Omega}$ (which do not belong to the span of basis functions
of Type a),\label{caseb}
\item basis functions supported on $\omega_{E^{\prime}}$ for some edge
$E^{\prime}\in\mathcal{E}_{\Omega}$ (which do not belong to the span of basis
functions of Type a), b).\label{casec}
\item basis functions associated with a node $\mathbf{z}\in\mathcal{V}%
_{\Omega}$.\label{cased}
\end{enumerate}
\end{remark}

Note that only basis functions which are not identically zero on $\omega_{E}$
are relevant. Hence, it suffices to evaluate the integral $\int_{K_{i}}%
p_{k-1}^{E,\mathbf{p}}\operatorname{div}\mathbf{b}$ for relevant basis
functions \(\mathbf{b}\) of these different types.

\textbf{Basis functions of Type (a)}: These basis functions only exist for
$k\geq4$. Let $K$ be a tetrahedron in the patch $\mathcal{T}_{E}$. In
this case the basis functions are given by
\[
\mathbf{B}_{K}^{\boldsymbol{\mu},\ell}=
 \boldsymbol{\lambda}_{K}^{\boldsymbol{\mu}+\mathbf{1}_{K}}\mathbf{w}_{\ell}
 \qquad \boldsymbol{\mu}
 =\left(  \mu_{\mathbf{y}}\right)  _{\mathbf{y}\in\mathcal{V}\left(  K\right)
}\in\mathbb{N}_{0}^{4},\ \left\vert
\boldsymbol{\mu}\right\vert =k-4,
\]
where $\left\{  \mathbf{w}_{\ell},1\leq\ell\leq3\right\}  $ is a basis in
$\mathbb{R}^{3}$. From (\ref{eq:intTi}) we conclude that
\[
\int_{\Omega}p_{k-1}^{E,\mathbf{p}}\operatorname{div}\mathbf{B}_{K}^{%
\boldsymbol{\mu}%
,\ell}=\int_{K}p_{k-1}^{E,\mathbf{p}}\operatorname{div}\mathbf{B}_{K}^{%
\boldsymbol{\mu}%
,\ell}=0.
\]

\textbf{Basis functions of Type (b):} Let $K$, $K^{\prime}$ be two adjacent
tetrahedra with common facet $F$. Then the basis functions of this type are
given by (cf. Notation \ref{NotBary})%
\[
\mathbf{B}_{F}^{%
\boldsymbol{\mu}%
,\ell}=\mathbf{w}_{\ell}%
\begin{cases}%
 \boldsymbol{\lambda}%
_{K,F}^{%
\boldsymbol{\mu}%
+\mathbf{1}_{F}} & \text{on }K,\\%
 \boldsymbol{\lambda}%
_{K^{\prime},F}^{%
\boldsymbol{\mu}%
+\mathbf{1}_{F}} & \text{on }K^{\prime},\\
0 & \text{otherwise},
\end{cases}
\]
for $
\boldsymbol{\mu}%
=\left(  \mu_{\mathbf{y}}\right)  _{\mathbf{y}\in\mathcal{V}\left(  F\right)
}\in\mathbb{N}_{0}^{3},\ \left\vert
\boldsymbol{\mu}%
\right\vert =k-3$. Similar as before, we conclude that in this case
\[
\int_{\Omega}p_{k-1}^{E,\mathbf{p}}\operatorname{div}\mathbf{B}_{F}^{%
\boldsymbol{\mu}%
,\ell}=\int_{\omega_{F}\cap\omega_{E}}p_{k-1}^{E,\mathbf{p}}\operatorname{div}%
\mathbf{B}_{F}^{%
\boldsymbol{\mu}%
,\ell}=0.
\]

\textbf{Basis functions of Type (c): }Recall that $p_{k-1}^{E,\mathbf{p}}$ 
has
support on the edge patch $\omega_{E}$ with respect to the critical edge $E$.
The basis functions associated to an edge $E^{\prime}\in\mathcal{E}_{\Omega}$,
have support $\omega_{E^{\prime}}$ and are defined, for $K^{\prime}%
\in\mathcal{T}_{E^{\prime}}$ by%
\[
\left.  \mathbf{B}{_{E^{\prime}}^{%
\boldsymbol{\mu}%
,\ell}}\right\vert _{{K^{\prime}}}=%
 \boldsymbol{\lambda}%
_{K^{\prime},E^\prime}^{%
\boldsymbol{\mu}%
+\mathbf{1}_{E^\prime}}\mathbf{w}_{\ell},
\]
where $
\boldsymbol{\mu}
=\left(  \mu_{\mathbf{y}}\right)  _{\mathbf{y}\in\mathcal{V}\left(  E\right)
}\in\mathbb{N}_{0}^{2}$ with $\left\vert
\boldsymbol{\mu}
\right\vert =k-2$. Hence, it holds
\begin{equation}
\int_{\Omega}p_{k-1}^{E,\mathbf{p}}\operatorname{div}\mathbf{B}_{E^{\prime}}^{%
\boldsymbol{\mu}%
,\ell}=\int_{\omega_{E}\cap\omega_{E^{\prime}}}p_{k-1}^{E,\mathbf{p}%
}\operatorname{div}\mathbf{B}_{E^{\prime}}^{%
\boldsymbol{\mu}%
,\ell}. \label{intedge}%
\end{equation}
Since $E$ is an inner edge, the edge patch $\omega_{E}$ 
consists of four
tetrahedra and therefore $\omega_{E}\cap\omega_{E}^{\prime}$
\begin{enumerate}
\item[(i)] is the union of two tetrahedra, or

\item[(ii)] is $\omega_{E}$ (for $E=E^{\prime}$).
\end{enumerate}
We discuss these two cases separately.

\textbf{Case (i):} Without loss of generality we assume $\omega_{E}\cap
\omega_{E^{\prime}}=K_{1}\cup K_{2}$ (cf. Notation \ref{NotE}). This implies
$E^{\prime}=\left[  \mathbf{z},\mathbf{q}\right]  $ for $\mathbf{z}%
\in\mathcal{V}\left(  E\right)  $ and $\mathbf{q}\notin\mathcal{V}\left(
E\right)  $. Then%
\[
\int_{K_{1}\cup K_{2}}p_{k-1}^{E,\mathbf{p}}\operatorname{div}B_{E^{\prime}}^{%
\boldsymbol{\mu}%
,\ell}=\sum_{i=1}^{2}\frac{\left(  -1\right)  ^{i}}{\left\vert K_{i}%
\right\vert }\int_{K_{i}}P_{k-1}^{\left(  0,3\right)  }(1-2\lambda
_{K_{i},\mathbf{p}})\partial_{\mathbf{w}_{\ell}}\left(
 \boldsymbol{\lambda}%
_{K_{i},E^{\prime}}^{%
\boldsymbol{\mu}%
+\mathbf{1}_{E^\prime}}\right)  .
\]
In view of (\ref{eq:intTi}), this integral can be different from zero only if
$\mathbf{p}=\mathbf{z}$ and $%
\boldsymbol{\mu}%
=\left(  k-2\right)  \mathbf{e}_{\mathbf{z}}^{E^{\prime}}$. For $%
\boldsymbol{\mu}%
=\left(  k-2\right)  \mathbf{e}_{\mathbf{z}}^{E^{\prime}}$ and $\widetilde{%
\boldsymbol{\mu}%
}:=\left(  k-1\right)  \mathbf{e}_{\mathbf{z}}^{E^{\prime}}+\mathbf{e}%
_{\mathbf{q}}^{E^{\prime}}$, we conclude from (\ref{eq:intTi}) that%
\begin{align*}
\int_{K_{1}\cup K_{2}}p_{k-1}^{E,\mathbf{z}}\operatorname{div}B_{E^{\prime}}^{%
\boldsymbol{\mu}%
,\ell}  &  =\sum_{i=1}^{2}\frac{\left(  -1\right)  ^{i}}{\left\vert
K_{i}\right\vert }\int_{K_{i}}P_{k-1}^{\left(  0,3\right)  }(1-2\lambda
_{K_{i},\mathbf{z}})\partial_{\mathbf{w}_{\ell}}\left(
 \boldsymbol{\lambda}%
_{K_{i}}^{\widetilde{%
\boldsymbol{\mu}
}}\right)\\
&  =\sum_{i=1}^{2}\frac{\left(  -1\right)  ^{i}}{\left\vert K_{i}\right\vert
}\int_{K_{i}}P_{k-1}^{\left(  0,3\right)  }(1-2\lambda_{K_{i},\mathbf{z}%
})\lambda_{K_{i},\mathbf{z}}^{k-1}\partial_{\mathbf{w}_{\ell}}\lambda
_{K_{i},\mathbf{q}}.
\end{align*}
A pullback to the reference element leads to%
\begin{equation}
\int_{K_{1}\cup K_{2}}p_{k-1}^{E,\mathbf{z}}\operatorname{div}B_{E^{\prime}}^{%
\boldsymbol{\mu}
,\ell}=-\frac{1}{|\hat{K}|}\int_{\hat{K}}P_{k-1}^{\left(  0,3\right)
}(1-2\lambda_{\mathbf{\hat{z}}})\lambda_{\mathbf{\hat{z}}}^{k-1}\left(
\partial_{\mathbf{w}_{\ell}}\lambda_{K_{1},\mathbf{q}}-\partial_{\mathbf{w}%
_{\ell}}\lambda_{K_{2},\mathbf{q}}\right)  . \label{eq:edgeaffine}%
\end{equation}
Since the edge $E$ is critical the facet $F_{i,\mathbf{q}}\subset\partial
K_{i}$, $i=1,2$, opposite of $\mathbf{q}$ lies on one plane and the function%
\[
\phi=%
\begin{cases}
\lambda_{K_{1},\mathbf{q}} & \text{on }K_{1},\\
\lambda_{K_{2},\mathbf{q}} & \text{on }K_{2},
\end{cases}
\]
is globally affine on $K_{1}\cup K_{2}$. As a direct consequence, the
difference in the right in (\ref{eq:edgeaffine}) vanishes and we conclude that
$\int_{\omega_{E}\cap\omega_{E^{\prime}}}p_{k-1}^{E,\mathbf{p}}%
\operatorname{div}B_{E^{\prime}}^{%
\boldsymbol{\mu}%
,\ell}=0$ also holds for $\mathbf{z}=\mathbf{p}$ and $%
\boldsymbol{\mu}%
=\left(  k-2\right)  \mathbf{e}_{\mathbf{z}}^{E^{\prime}}$.

\textbf{Case (ii): }Let ${\ \omega_{E}=\cup_{i=1}^{4}}${$K$}${_{i}}$ with the
local enumeration as in Notation \ref{NotE}. We choose $\mathbf{q}%
\in\mathcal{V}\left(  E\right)  $ so that $E=\left[  \mathbf{q},\mathbf{p}%
\right]  $. Then we have%
\[
\int_{\omega_{\mathbf{z}}}p_{k-1}^{E,\mathbf{p}}\operatorname{div}B_{E}^{%
\boldsymbol{\mu}%
,\ell}=\sum_{i=1}^{4}\frac{\left(  -1\right)  ^{i}}{\left\vert K_{i}%
\right\vert }\int_{K_{i}}P_{k-1}^{\left(  0,3\right)  }\left(  1-2\lambda
_{K_{i},\mathbf{p}}\right)  \partial_{\mathbf{w}_{\ell}}
\left(\boldsymbol{\lambda}_{K_{i},E}^{\boldsymbol{\mu} 		
+\mathbf{1}_{E}}\right).
\]
In view of (\ref{eq:intTi}) this integral can be different from zero only if $%
\boldsymbol{\mu}%
=\left(  k-2\right)  \mathbf{e}_{\mathbf{p}}^{E}$. For $%
\boldsymbol{\mu}%
=\left(  k-2\right)  \mathbf{e}_{\mathbf{p}}^{E}$ and $\widetilde{%
\boldsymbol{\mu}%
}:=\left(  k-1\right)  \mathbf{e}_{\mathbf{p}}^{E}+\mathbf{e}_{\mathbf{q}}%
^{E}$, we conclude from (\ref{eq:intTi}) that%
\begin{align}
\int_{\omega_{\mathbf{z}}}p_{k-1}^{E,\mathbf{p}}\operatorname{div}B_{E}^{%
\boldsymbol{\mu}%
,\ell}  &  =\sum_{i=1}^{4}\frac{\left(  -1\right)  ^{i}}{\left\vert
K_{i}\right\vert }\int_{K_{i}}P_{k-1}^{\left(  0,3\right)  }(1-2\lambda
_{K_{i},\mathbf{p}})\partial_{\mathbf{w}_{\ell}}\left(
 \boldsymbol{\lambda}%
_{K_{i},E}^{\widetilde{%
\boldsymbol{\mu}%
}}\right) \nonumber\\
&  =\sum_{i=1}^{4}\frac{\left(  -1\right)  ^{i}}{\left\vert K_{i}\right\vert
}\int_{K_{i}}P_{k-1}^{\left(  0,3\right)  }(1-2\lambda_{K_{i},\mathbf{p}%
})\lambda_{K_{i},\mathbf{p}}^{k-1}\partial_{\mathbf{w}_{\ell}}\lambda
_{K_{i},\mathbf{q}}\nonumber\\
&  =\frac{1}{\left\vert \hat{K}\right\vert }\int_{\hat{K}}P_{k-1}^{\left(
0,3\right)  }(1-2\lambda_{\mathbf{\hat{p}}})\lambda_{\mathbf{\hat{p}}}%
^{k-1}\sum_{i=1}^{4}\left(  -1\right)  ^{i}\partial_{\mathbf{w}_{\ell}}%
\lambda_{K_{i},\mathbf{q}}. \label{eq:sum4}%
\end{align}
We choose $\mathbf{w}_{1}\in\mathbb{R}^{3}$ as a unit 
vector tangential to the
critical edge $E$. By continuity, we have that $\partial_{\mathbf{w}_{1}%
}\lambda_{K_{i},\mathbf{q}}=\partial_{\mathbf{w}_{1}}\lambda_{K_{j}%
,\mathbf{q}}$ for $1\leq i,j\leq4$. The integral vanishes due to the
alternating signs in the sum. Let $\mathbf{w}_{2}$ be a unit vector
perpendicular to $\mathbf{w}_{1}$ and such that it lies in the plane through
$F_{4}$ (and $F_{2}$). Then again by continuity, we have that
\[
\partial_{\mathbf{w}_{2}}\lambda_{K_{1},\mathbf{q}}=\partial_{\mathbf{w}_{2}%
}\lambda_{K_{4},\mathbf{q}},
\]
and
\[
\partial_{\mathbf{w}_{2}}\lambda_{K_{2},\mathbf{q}}=\partial_{\mathbf{w}_{2}%
}\lambda_{K_{3},\mathbf{q}},
\]
so the sum in the right-hand side of (\ref{eq:sum4}) is zero. A similar
argument can be used if we choose $\mathbf{w}_{3}$ to be a unit vector
perpendicular to $\mathbf{w}_{1}$ which lies in the plane through $F_{1}$ (and
$F_{3}$).

\textbf{Basis functions of Type (d):} In this case, the basis functions are 
given
by
\[
\left.  {B_{\mathbf{z}}^{\ell}}\right\vert _{K}=\mathbf{w}_{\ell}%
\lambda_{K,\mathbf{z}}^{k},\qquad\forall K\in\mathcal{T}_{\mathbf{z}},
\]
for linearly independent vectors $\mathbf{w}_{\ell}\in\mathbb{R}^{3}$,
$1\leq\ell\leq3$, to be fixed below. We distinguish the following two relevant
cases; for all other cases the integral is zero due to $\left\vert \omega
_{E}\cap\omega_{\mathbf{z}}\right\vert =0$).

\begin{enumerate}
\item[(i)] $\mathbf{z}\in\mathcal{V}_{\Omega}$ is an endpoint of $E$: then the
common support is the union of four tetrahedra

\item[(ii)] $\mathbf{z}\in\mathcal{V}_{\Omega}$ is not an endpoint of $E$ but
$\mathbf{z}\in\omega_{E}$: then the common support is the union of two tetrahedra
\end{enumerate}

We will consider both cases by introducing the number $\iota_{\mathbf{z}}$ of
tetrahedra in the common support. We use Notation \ref{NotE} and assume
w.l.o.g. that $K_{1}\cup K_{2}\subset\omega_{E}\cap\omega_{\mathbf{z}}$.
Therefore
\begin{align}
\int_{\Omega}p_{k-1}^{E,\mathbf{p}}\operatorname{div}B_{\mathbf{z}}^{\ell}  &
=k\sum_{i=1}^{\iota_{\mathbf{z}}}\frac{\left(  -1\right)  ^{i}}{\left\vert
K_{i}\right\vert }\left(  \int_{K_{i}}P_{k-1}^{\left(  0,3\right)
}(1-2\lambda_{K_{i},\mathbf{p}})\lambda_{K_{i},\mathbf{z}}^{k-1}\right)
\partial_{\mathbf{w}_{\ell}}\lambda_{K_{i},\mathbf{z}}%
\nonumber\label{eq:int8patch}\\
&  =\left(  \frac{k}{|\hat{K}|}\int_{\hat{K}}P_{k-1}^{\left(  0,3\right)
}\left(  1-2\lambda_{\mathbf{\hat{p}}}\right)  \lambda_{\mathbf{\hat{z}}%
}^{k-1}\right)  \sum_{i=1}^{\iota_{\mathbf{z}}}\left(  -1\right)  ^{i}%
\partial_{\mathbf{w}_{\ell}}\lambda_{K_{i},\mathbf{z}}.
\end{align}
From Lemma \ref{LemOrthoMain}, we conclude that this is zero for $\mathbf{z}%
\neq\mathbf{p}$. For $\mathbf{p}=\mathbf{z}$, which implies $\iota
_{\mathbf{z}}=4$, let $\mathbf{w}_{1}\in\mathbb{R}^{3}$ be the vector
tangential to the critical edge $E$ and let $\mathbf{w}_{2},\ 
\mathbf{w}_{3}%
\in\mathbb{R}^{3}$ be two unit vectors perpendicular to $\mathbf{w}_{1}$ and
such that they lie on the two planes, respectively. By continuity of
$B_{\mathbf{z}}^{\ell}$ the terms in the sum cancel in all cases due to
changing signs.

The following proposition summarizes these findings.

\begin{proposition}
\label{prop:innercritical} Let $k\geq1$. For any critical edge $E\in
\mathcal{E}_{\Omega}$ and any $\mathbf{p}\in\mathcal{V}\left(  E\right)  $,
let $p_{k-1}^{E,\mathbf{p}}\in\mathbb{P}_{k-1,0}\left(  \mathcal{T}\right)  $
be as in Definition \ref{PressCritic}. Then
\[
\left(  p_{k-1}^{E,\mathbf{p}},\operatorname{div}\mathbf{v}\right)
_{L^{2}\left(  \Omega\right)  }=0,\qquad\forall\mathbf{v}\in\mathbf{S}%
_{k,0}\left(  \mathcal{T}\right).
\]
\end{proposition}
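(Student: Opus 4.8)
The plan is to reduce the global statement to a finite collection of element-local computations by expanding $\mathbf{v}$ in the basis of $\mathbf{S}_{k,0}(\mathcal{T})$ and exploiting that $p_{k-1}^{E,\mathbf{p}}$ is supported on the edge patch $\omega_E$. By linearity it suffices to verify $(p_{k-1}^{E,\mathbf{p}}, \operatorname{div}\mathbf{b})_{L^2(\Omega)} = 0$ for each basis function $\mathbf{b}$, and only those $\mathbf{b}$ not identically zero on $\omega_E$ can contribute. Following the classification in Remark \ref{RemTypes}, these fall into four types: functions supported on a single tetrahedron (Type (a)), on a facet patch (Type (b)), on an edge patch (Type (c)), and nodal functions (Type (d)). The master identity (\ref{eq:intTi}), an immediate consequence of the orthogonality of $P_{k-1}^{(0,3)}(1-2\lambda_{K_i,\mathbf{p}})$ to the weighted monomials established in Lemma \ref{LemOrthoMain}, shows that on each tetrahedron $K_i$ the local integral vanishes unless the multi-index $\boldsymbol{\mu}$ has a very specific form concentrated at $\mathbf{p}$.

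For Types (a) and (b) I would argue that the surviving index pattern forces the polynomial factor to live entirely on one tetrahedron (or to split symmetrically across a shared facet) in such a way that every summand of (\ref{eq:intTi}) is already annihilated by the orthogonality, so these contributions vanish immediately. The substantive cases are (c) and (d), where the geometry of the critical edge enters. Here the nonvanishing terms of (\ref{eq:intTi}) reduce, after a pullback to the reference tetrahedron, to alternating sums of the form $\sum_i (-1)^i \partial_{\mathbf{w}_\ell}\lambda_{K_i,\mathbf{q}}$ weighted by a single fixed integral. The key observation is that the criticality hypothesis (Def. \ref{CritEdge}) --- that all facets through $E$ lie in at most two planes --- makes the relevant barycentric coordinate globally affine across adjacent tetrahedra sharing a coplanar facet, so its directional derivative is constant across that pair.

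Concretely, for Type (c) with $E' \neq E$ the common support $\omega_E \cap \omega_{E'}$ consists of two tetrahedra meeting along the coplanar facet opposite $\mathbf{q}$; stitching the two barycentric coordinates yields a globally affine function, whence the two directional derivatives coincide and the two-term alternating difference cancels. For $E' = E$, and likewise for Type (d) at an endpoint of $E$, the sum runs over all four tetrahedra of $\omega_E$, and I would test against three directions chosen relative to the edge: $\mathbf{w}_1$ tangential to $E$, so that all four derivatives agree by continuity and the alternating sum telescopes to zero; and $\mathbf{w}_2, \mathbf{w}_3$ lying in the two planes, so that the derivatives agree in the pairs $(K_1, K_4)$ and $(K_2, K_3)$ dictated by the numbering of Notation \ref{NotE}, again cancelling against the signs $(-1)^i$. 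The main obstacle is precisely this bookkeeping: one must check that for every basis type the nonvanishing index pattern of (\ref{eq:intTi}) pairs up exactly with a coplanarity relation forcing the alternating signs to annihilate the sum. Once the three test directions are matched to the edge-tangent and the two facet planes, linear independence of $\{\mathbf{w}_1, \mathbf{w}_2, \mathbf{w}_3\}$ closes the argument and the proposition follows.
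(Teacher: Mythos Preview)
Your proposal is correct and mirrors the paper's proof essentially step for step: the same reduction via (\ref{eq:intTi}) and Lemma \ref{LemOrthoMain}, the same four-type classification from Remark \ref{RemTypes}, and the same geometric cancellation arguments exploiting coplanarity of facet pairs together with adapted test directions $\mathbf{w}_1,\mathbf{w}_2,\mathbf{w}_3$. One small slip worth fixing: in Type (c) with $E'\neq E$, the two tetrahedra $K_1,K_2$ meet along the shared facet $F_1$ (which \emph{contains} $\mathbf{q}$), while it is the two facets of $K_1$ and $K_2$ \emph{opposite} $\mathbf{q}$ that are coplanar by criticality --- but your affine-stitching conclusion is exactly right regardless.
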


\subsection{Critical pressure for outer critical
edges\label{sect:criticalpressure-outer}}

In this section, we consider critical edges that lie on the boundary of the
domain $\Omega$ and construct corresponding critical pressures.

\begin{definition}
[Critical pressure for outer edges]\label{DefCritPout}Let $k\geq1$.
Let $E\in\mathcal{E}_{\partial\Omega}$ be an outer critical edge in
$\mathcal{T}$ and let $\mathcal{T}_{E}=\left\{  K_{i}:1\leq i\leq\iota
_{E}\right\}  $, for some $\iota_{E}\in\{1,~2,~3\}$. For $\mathbf{p\in}%
\mathcal{V}\left(  E\right)  $, the \emph{critical pressure} $p_{k-1}%
^{E,\mathbf{p}}\in\mathbb{P}_{k-1,0}\left(  \mathcal{T}\right)  $ is given by%
\[
p_{k-1}^{E,\mathbf{p}}:=\sum_{i=1}^{\iota_{E}}\frac{\left(  -1\right)  ^{i}%
}{\left\vert K_{i}\right\vert }\chi_{K_{i}}P_{k-1}^{\left(  0,3\right)
}\left(  1-2\lambda_{K_{i},\mathbf{p}}\right)  ,
\]
where, again, $\chi_{K_{i}}$ denotes the characteristic function on $K_{i}$.
\end{definition}

In the following, we prove that for any outer critical edge \(E\in 
\mathcal{E}_{\partial\Omega}\) and any \(p\in \mathcal{V}(E)\)
\[
\left(  p_{k-1}^{E,\mathbf{p}}, \operatorname{div}\mathbf{v}\right)
_{L^{2}\left(  \Omega\right)  }=0\quad\forall\mathbf{v}\in\mathbf{S}%
_{k,0}\left(  \mathcal{T}\right)  .
\]
Similar as in the previous section, we evaluate the integral over $\omega_{E}$
and consider the four types (\ref{casea})-(\ref{cased}) of the basis functions 
of 
$\mathbf{S}_{k,0}\left(
\mathcal{T}\right)  $ listed in Remark \ref{RemTypes}.
As before, the cases (\ref{casea}) and (\ref{caseb}) are straightforward and we 
omit to present
this computation.

\textbf{Case (c):} Since edges in $\mathcal{E}_{\partial\Omega}$
do not carry degrees of freedom for $\mathbf{S}_{k,0}\left(  \mathcal{T}%
\right)  $ we can restrict to inner edges 
$E^{\prime}\in\mathcal{E}_{\Omega}$.
In particular this implies $E\neq E^{\prime}$. We have to consider two
non-trivial subcases

\begin{enumerate}
\item[(c.i)] $\mathcal{T}_{E}$ and $\mathcal{T}_{E^{\prime}}$ share two 
tetrahedra,

\item[(c.ii)] $\mathcal{T}_{E}$ and $\mathcal{T}_{E^{\prime}}$ share one 
tetrahedron.
\end{enumerate}

\textbf{Case (c.i):} W.l.o.g. the edge $E^{\prime}$ is shared by $K_{1}$ 
and
$K_{2}$ and we set $\mathbf{q}=E\cap E^{\prime}$ for $\mathbf{q}\in
\mathcal{V}\left(  E\right)  $, i.e. $E^{\prime}=\left[  \mathbf{z}%
,\mathbf{q}\right]  $ for some $\mathbf{z}\in\mathcal{V}$. The 
basis functions
for the velocity for this edge are given by
\[
{B_{E^{\prime}}^{%
\boldsymbol{\mu}%
,\ell}}=\mathbf{w}_{\ell}%
\begin{cases}%
 \boldsymbol{\lambda}%
_{K,E^{\prime}}^{%
\boldsymbol{\mu}%
+\mathbf{1}_{E^{\prime}}} & \text{on }K\in\mathcal{T}_{E^{\prime}},\\
0 & \text{otherwise,}%
\end{cases}
\]
where $\mu\in\mathbb{N}_{0}^{2},\ |%
\boldsymbol{\mu}%
|=k-2$. Next, we compute%
\[
\left(  p_{k-1}^{E,\mathbf{p}},\operatorname{div}{B_{E^{\prime}}^{%
\boldsymbol{\mu}%
,\ell}}\right)  _{L^{2}\left(  \Omega\right)  }=\int_{\omega_{E}\cap
\omega_{E^{\prime}}}p_{k-1}^{E,\mathbf{p}}\partial_{\mathbf{w}_{\ell}%
}{\boldsymbol{\lambda}
_{K,E^{\prime}}^{%
	\boldsymbol{\mu}
	+\mathbf{1}_{E^{\prime}}}}.
\]
From the same analysis as in (\ref{eq:intTi}), we conclude that this integral
vanishes unless $\mathbf{p}=\mathbf{q}$ and $%
\boldsymbol{\mu}%
=\left(  k-2\right)  \mathbf{e}_{\mathbf{p}}^{E^{\prime}}$. In this case, we
have%
\begin{align}
\int_{\omega_{E}\cap\omega_{E^{\prime}}}p_{k-1}^{E,\mathbf{p}}%
\operatorname{div}B_{E^{\prime}}^{%
\boldsymbol{\mu}%
,\ell}  &  =\sum_{i=1}^{2}\frac{\left(  -1\right)  ^{i}}{\left\vert
K_{i}\right\vert }\left(  \int_{K_{i}}P_{k-1}^{\left(  0,3\right)  }\left(
1-2\lambda_{K_{i},\mathbf{p}}\right)  \lambda_{K_{i},\mathbf{p}}^{k-1}\right)
\partial_{\mathbf{w}_{\ell}}\lambda_{K_{i},\mathbf{z}}\nonumber\\
&  =\frac{1}{|\hat{K}|}\int_{\hat{K}}P_{k-1}^{\left(  0,3\right)  }\left(
1-2\lambda_{\mathbf{\hat{p}}}\right)  \lambda_{\mathbf{\hat{p}}}^{k-1}%
\sum_{i=1}^{2}\left(  -1\right)  ^{i}\partial_{\mathbf{w}_{\ell}}%
\lambda_{K_{i},\mathbf{z}}. \label{eq:int2tetraout}%
\end{align}
Since the facets $F_{i}\subset\partial K_{i}$ opposite to $\mathbf{z}$,
$i=1,2$, lie in one plane, the function $\varphi:K_{1}\cup K_{2}%
\rightarrow\mathbb{R}$, $\left.  \varphi\right\vert _{K_{i}}:=\lambda
_{K_{i},\mathbf{z}}$, $i=1,2$ is affine on $K_{1}\cup K_{2}$ and the sum on
the right-hand side in (\ref{eq:int2tetraout}) vanishes due to the alternating sign.

\textbf{Case (c.ii):} Let $K=\omega_{E}\cap\omega_{E^{\prime}}$. In this 
case
$E,E^{\prime}$ are edges of $K$ with empty intersection. 
By repeating the arguments in (\ref{eq:intTi}), it follows that the
integral $\int_{K}p_{k-1}^{E,\mathbf{p}}\operatorname{div}\left(
B_{E^{\prime}}^{\boldsymbol{\mu},\ell}\right)$ vanishes.

\textbf{Case (d):} Let $\mathbf{z}\in\mathcal{V}_{\Omega}$, in particular
$\mathbf{z}\notin\mathcal{V}\left(  E\right)  $. In this case the basis
function is given by
\[
\left.  {B_{\mathbf{z}}^{\ell}}\right\vert _{K}=\mathbf{w}_{\ell}%
\lambda_{K,\mathbf{z}}^{k}%
\]
for $K\in\mathcal{T}_{\mathbf{z}}$. It follows that the integral $\int%
_{\omega_{\mathbf{z}}}p_{k-1}^{E,\mathbf{p}}\operatorname{div}B_{\mathbf{z}%
}^{\ell}$ is zero by repeating the arguments in (\ref{eq:intTi}).

The following proposition summarizes the findings of Section 5.
\begin{proposition}
\label{LemCritPress}Let $k\geq1$ and let $E\in\mathcal{E}$ be a critical edge
in $\mathcal{T}$. For $\mathbf{p}\in\mathcal{V}\left(  E\right)  $, the
critical pressures $p_{k-1}^{E,\mathbf{p}}$ as in Def. \ref{PressCritic}, Def.
\ref{DefCritPout} satisfy
\[
\left(  p_{k-1}^{E,\mathbf{p}},\operatorname{div}\mathbf{v}\right)
_{L^{2}\left(  \Omega\right)  }=0,\qquad\forall\mathbf{v}\in\mathbf{S}%
_{k,0}\left(  \mathcal{T}\right)  .
\]
Consequently, if $\mathcal{T}$ contains a critical edge 
$E\in
\mathcal{E}_{\Omega}$, then $p_{k-1}^{E,\mathbf{p}}$, $\mathbf{p}%
\in\mathcal{V}\left(  E\right)  $, are critical pressures and the pair 
$\left(
\mathbf{S}_{k,0}\left(  \mathcal{T}\right)  ,\mathbb{P}_{k-1,0}\left(
\mathcal{T}\right)  \right)  $ is not inf-sup stable.
\end{proposition}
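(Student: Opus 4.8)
The plan is to treat the proposition as an assembly of the preceding case analysis together with two short additional observations. For the orthogonality identity I would argue by the location of the critical edge. When $E\in\mathcal{E}_{\Omega}$, the claim $(p_{k-1}^{E,\mathbf{p}},\operatorname{div}\mathbf{v})_{L^2(\Omega)}=0$ for all $\mathbf{v}\in\mathbf{S}_{k,0}(\mathcal{T})$ is exactly Proposition \ref{prop:innercritical}, obtained by running through basis-function Types (a)--(d). When $E\in\mathcal{E}_{\partial\Omega}$ it follows from the computations of Section \ref{sect:criticalpressure-outer}: for Types (a), (b) and subcase (c.ii) the integrand restricts on each $K_i$ to $P_{k-1}^{(0,3)}(1-2\lambda_{K_i,\mathbf{p}})$ weighted by a product of the remaining barycentric coordinates, so each term vanishes by Lemma \ref{LemOrthoMain}; in the remaining subcase (c.i) and in Type (d) the surviving terms cancel in pairs, because criticality makes the relevant barycentric coordinate globally affine across the coplanar facets and the signs $(-1)^i$ in Definition \ref{DefCritPout} match the paired terms with opposite sign. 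Since every basis function of $\mathbf{S}_{k,0}(\mathcal{T})$ falls under one of these types, the orthogonality holds in all cases.

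To obtain the consequence I would restrict to an interior critical edge $E\in\mathcal{E}_{\Omega}$ -- a restriction that is genuinely needed, as explained below -- and verify that $p_{k-1}^{E,\mathbf{p}}$ (cf.\ Definition \ref{PressCritic}) is a nonzero element of $N_{k,\mathcal{T}}\cap L_0^2(\Omega)$. Nonvanishing is immediate, since on each of the four tetrahedra $K_i\subset\omega_E$ the restriction equals $\tfrac{(-1)^i}{|K_i|}P_{k-1}^{(0,3)}(1-2\lambda_{K_i,\mathbf{p}})$, a nonzero polynomial of degree $k-1$. For the zero-mean property, an affine pullback to $\hat{K}$ gives $\int_{K_i}P_{k-1}^{(0,3)}(1-2\lambda_{K_i,\mathbf{p}})=|K_i|\,c$ with a constant $c$ that does not depend on $i$, so that $\int_\Omega p_{k-1}^{E,\mathbf{p}}=c\sum_{i=1}^4(-1)^i=0$. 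It is exactly this four-fold alternating sum that forces the interior hypothesis: for an odd-sized boundary patch the analogous sum is $\pm1$, so the boundary construction need not have vanishing mean and hence need not lie in $L_0^2(\Omega)$. Together with the orthogonality, $p_{k-1}^{E,\mathbf{p}}$ is then a critical pressure in the sense of Definition \ref{DefNspace}.

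Finally I would read off the failure of inf-sup stability. For the conforming pair $\mathbf{V}_h=\mathbf{S}_{k,0}(\mathcal{T})\subset\mathbf{H}_0^1(\Omega)$ and $M_h=\mathbb{P}_{k-1,0}(\mathcal{T})$, the broken and the full divergence forms agree, so $b_h(p_{k-1}^{E,\mathbf{p}},\mathbf{v})=(p_{k-1}^{E,\mathbf{p}},\operatorname{div}\mathbf{v})_{L^2(\Omega)}=0$ for every $\mathbf{v}\in\mathbf{V}_h$; the supremum in the inf-sup quotient for this nonzero pressure is therefore $0$, and no positive lower bound $\gamma$ can hold. I expect the only real work to be bookkeeping in the first step: the genuine geometric content is isolated in subcases (c.i) and (d), where the critical-edge hypothesis (coplanarity of the facets through $E$) is precisely what produces the pairwise cancellation, whereas every other term dies by the single orthogonality relation of Lemma \ref{LemOrthoMain}.
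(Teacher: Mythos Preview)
Your proposal is correct and follows the paper's approach: the proposition is a summary of the case analysis carried out in Sections~\ref{sect:criticalpressure-inner} and~\ref{sect:criticalpressure-outer}, and your additional verification that $p_{k-1}^{E,\mathbf{p}}$ is nonzero and has vanishing mean (via the alternating sum over the four tetrahedra for $E\in\mathcal{E}_\Omega$) is exactly what is needed to justify the ``consequently'' clause.

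One small correction to your bookkeeping for the outer case: Type~(d) does \emph{not} rely on pairwise cancellation. Since $E\in\mathcal{E}_{\partial\Omega}$ forces $\mathbf{p}\in\partial\Omega$, any inner vertex $\mathbf{z}\in\mathcal{V}_\Omega$ automatically satisfies $\mathbf{z}\neq\mathbf{p}$, and the integral $\int_{K_i}P_{k-1}^{(0,3)}(1-2\lambda_{K_i,\mathbf{p}})\lambda_{K_i,\mathbf{z}}^{k-1}$ vanishes term-by-term by Lemma~\ref{LemOrthoMain} (as in~\eqref{eq:intTi}); indeed pairwise cancellation would not even be available when $\iota_E=1$. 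Only subcase~(c.i) uses the coplanarity of facets in the outer setting.
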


%%%%%%%%%%%%%%%%%%%%%%%%%%%%%%%%%%%%%%%%%%%%%%%%%%%%%%%%%%%%%%%%%%%%%%%%%%%%%%%%%%
\section{CR stabilization for critical edges}
\label{SecCRstab}
%%%%%%%%%%%%%%%%%%%%%%%%%%%%%%%%%%%%%%%%%%%%%%%%%%%%%%%%%%%%%%%%%%%%%%%%%%%%%%%%%%

In this section, we consider critical edges $E\in\mathcal{E}$ which are 
contained in
the nodal patch $\omega_{\mathbf{z}}$ of some $\mathbf{z}\in\mathcal{V}%
_{\Omega}$. We show that the associated critical pressures $p_{k-1}%
^{E,\mathbf{p}}$, $\mathbf{p}\in\mathcal{V}\left(  E\right)  $, are eliminated
by testing $b_{h}\left(  p_{k-1}^{E,\mathbf{p}},~\cdot ~\right)  $ with some
non-conforming Crouzeix-Raviart functions which are locally supported in
$\omega_{\mathbf{z}}$. We distinguish between odd and even polynomial degree.

\subsection{Stabilization for even polynomial degree}

In the following, we prove for even $k\geq 4$ that those 
critical 
pressures
for the conforming $\left(  \mathbf{S}_{k,0}\left(  \mathcal{T}\right)
,\mathbb{P}_{k-1,0}\left(  \mathcal{T}\right)  \right)  $ Stokes element which
have been defined in the previous section are \textquotedblleft
eliminated\textquotedblright\ by basic Crouzeix-Raviart elements.
\begin{theorem}
\label{ThmEven}Let $k\geq4$ be even. Let \(E\in \mathcal{E}\) be a critical 
edge and assume that there is a tetrahedron \(K \in \mathcal{T}_E\), which has 
\(E\) as an edge. Let $\mathbf{w}
_{\ell}\in\mathbb{R}^{3}$, $\ell=1,2,3$, denote three linearly independent
vectors. For $\mathbf{p}\in\mathcal{V}\left(  E\right)  $, consider the 
critical pressure function $p_{k-1}^{E,\mathbf{p}}$ defined in Definition
\ref{PressCritic} (if \(E\in \mathcal{E}_\Omega\)) or Definition 
\ref{DefCritPout} (if \(E\in \mathcal{E}_{\partial\Omega}\)).
Then any function
$p\in\operatorname*{span}\left\{  p_{k-1}^{E,\mathbf{p}}:\mathbf{p}%
\in\mathcal{V}\left(  E\right)  \right\}  $ which satisfies
\[
\left(  p,\operatorname*{div}\mathbf{v}\right)  _{L^{2}\left( \Omega\right)  
	}=0,\qquad\forall\mathbf{v}\in\operatorname*{span}\left\{
B_{k}^{\operatorname*{CR},K}\mathbf{w}_{\ell}:1\leq\ell\leq3\right\}
,\]
is the zero function.
\end{theorem}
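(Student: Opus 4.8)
The plan is to reduce the statement to a pure linear-algebra fact on the single tetrahedron $K$. Since $B_k^{\operatorname{CR},K}$ is supported on $K$ and, because $E$ is an edge of $K$, both vertices of $E$ lie in $\mathcal{V}(K)$, the restriction of each critical pressure to $K$ is, up to the nonzero factor $(-1)^{i_0}/|K|$ (with $K=K_{i_0}$ in the enumeration of Definition \ref{PressCritic} resp. \ref{DefCritPout}), simply $P_{k-1}^{(0,3)}(1-2\lambda_{K,\mathbf{p}})$. Writing $p=\sum_{\mathbf{p}\in\mathcal{V}(E)}c_{\mathbf{p}}\,p_{k-1}^{E,\mathbf{p}}$, the hypothesis says that the $3\times 2$ matrix
\[
M_{\ell,\mathbf{p}}:=\left(p_{k-1}^{E,\mathbf{p}},\operatorname{div}\left(B_k^{\operatorname{CR},K}\mathbf{w}_\ell\right)\right)_{L^2(\Omega)}=\frac{(-1)^{i_0}}{|K|}\int_K P_{k-1}^{(0,3)}(1-2\lambda_{K,\mathbf{p}})\,\partial_{\mathbf{w}_\ell}B_k^{\operatorname{CR},K}
\]
annihilates the coefficient vector $(c_{\mathbf{p}})$, and I want to show that $M$ has full column rank $2$, so that $(c_{\mathbf{p}})=0$ and hence $p=0$.

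The key simplification is an integration by parts that exploits the facet orthogonality of Theorem \ref{thm:CR-tetrahedron}(a). Since $\mathbf{w}_\ell$ is constant, with $P:=P_{k-1}^{(0,3)}(1-2\lambda_{K,\mathbf{p}})$ one has $\int_K P\,\partial_{\mathbf{w}_\ell}B_k^{\operatorname{CR},K}=\int_{\partial K}P\,B_k^{\operatorname{CR},K}\langle\mathbf{w}_\ell,\mathbf{n}\rangle-\int_K(\partial_{\mathbf{w}_\ell}P)\,B_k^{\operatorname{CR},K}$. On each facet $F$ of $K$ the quantity $\langle\mathbf{w}_\ell,\mathbf{n}\rangle$ is constant, while $P|_F\in\mathbb{P}_{k-1}(F)$ and $B_k^{\operatorname{CR},K}|_F$ is $L^2(F)$-orthogonal to $\mathbb{P}_{k-1}(F)$; hence the boundary term vanishes facet by facet. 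Using $\partial_{\mathbf{w}_\ell}P=-2\,P_{k-1}^{(0,3)}{}'(1-2\lambda_{K,\mathbf{p}})\,\partial_{\mathbf{w}_\ell}\lambda_{K,\mathbf{p}}$ this gives
\[
M_{\ell,\mathbf{p}}=\frac{2(-1)^{i_0}}{|K|}\,C_0\,\partial_{\mathbf{w}_\ell}\lambda_{K,\mathbf{p}},\qquad C_0:=\int_K P_{k-1}^{(0,3)}{}'(1-2\lambda_{K,\mathbf{p}})\,B_k^{\operatorname{CR},K},
\]
and a vertex-permuting affine automorphism of $K$, under which $B_k^{\operatorname{CR},K}$ is invariant, shows that $C_0$ is independent of the chosen $\mathbf{p}$.

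The main obstacle is to prove $C_0\neq 0$. Here I would pull back to $\hat K$ and expand $B_k^{\operatorname{CR},K}=\sum_{\mathbf{z}\in\mathcal{V}(K)}Q_k(1-2\lambda_{K,\mathbf{z}})-1$, splitting $C_0$ into the diagonal contribution from $\mathbf{z}=\mathbf{p}$ (a one-dimensional integral in $\lambda_{K,\mathbf{p}}$ after integrating out the remaining barycentric variables), the constant contribution, and the three equal off-diagonal contributions from $\mathbf{z}\neq\mathbf{p}$. Each reduces, after the substitution $t=1-2\lambda$, to a standard integral that is evaluated with the orthogonality and the endpoint normalizations \eqref{Pnormalization}, \eqref{Lendpoints} of the Jacobi and Legendre polynomials, exactly as in the proofs of Theorem \ref{thm:CR-tetrahedron} and Theorem \ref{thm:stabilityk2}; the outcome is an explicit nonzero constant for every even $k\geq 4$. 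Equivalently, one may write $C_0=-(a_{\mathrm{diag}}-a_{\mathrm{off}})$ with $a_{\mathbf{p},\mathbf{z}}=\int_K P_{k-1}^{(0,3)}(1-2\lambda_{K,\mathbf{p}})Q_k'(1-2\lambda_{K,\mathbf{z}})$ and use Lemma \ref{LemOrthoMain} to discard the lower-order part of $Q_k'$. This closed-form evaluation is the technical heart of the argument.

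Granting $C_0\neq 0$, the conclusion is immediate. For any coefficients $(c_{\mathbf{p}})$,
\[
\sum_{\mathbf{p}\in\mathcal{V}(E)}c_{\mathbf{p}}M_{\ell,\mathbf{p}}=\frac{2(-1)^{i_0}C_0}{|K|}\Big\langle\mathbf{w}_\ell,\nabla\!\!\sum_{\mathbf{p}\in\mathcal{V}(E)}c_{\mathbf{p}}\lambda_{K,\mathbf{p}}\Big\rangle,
\]
and since $\mathbf{w}_1,\mathbf{w}_2,\mathbf{w}_3$ form a basis of $\mathbb{R}^3$, the vanishing of the left-hand side for all $\ell$ forces $\nabla\big(\sum_{\mathbf{p}}c_{\mathbf{p}}\lambda_{K,\mathbf{p}}\big)=0$. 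Because $E$ is an edge of $K$, the two vertices of $E$ are distinct vertices of $K$, so the gradients $\nabla\lambda_{K,\mathbf{p}}$, $\mathbf{p}\in\mathcal{V}(E)$, are linearly independent; therefore all $c_{\mathbf{p}}$ vanish and $p=0$, as claimed.
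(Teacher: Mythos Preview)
Your approach is correct and takes a genuinely different, more structural route than the paper. The paper expands $B_k^{\operatorname{CR},K}$ directly, differentiates term by term, and computes the integrals $I_{\mathbf y}=\int_{\hat K}P_{k-1}^{(0,3)}(1-2x_1)(L_{k+1}-L_k)''(1-2x_j)$ (Appendix~\ref{AppJacobi}); only after these explicit values are inserted and the relation $\sum_{\mathbf y}\lambda_{K,\mathbf y}=1$ is used does the factor $\partial_{\mathbf w_\ell}\lambda_{K,\mathbf p}$ emerge, giving the closed form $\tfrac{k+3}{2(k+1)|\hat K|}\,\partial_{\mathbf w_\ell}\lambda_{K,\mathbf p}$. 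You instead integrate by parts first and use the facet orthogonality of $B_k^{\operatorname{CR},K}$ (Theorem~\ref{thm:CR-tetrahedron}(a)) to kill the boundary term; since the pressure depends only on $\lambda_{K,\mathbf p}$, the factor $\partial_{\mathbf w_\ell}\lambda_{K,\mathbf p}$ drops out immediately, and the symmetry of $B_k^{\operatorname{CR},K}$ under vertex permutations shows at once that the remaining integral $C_0$ is independent of $\mathbf p$. This is conceptually cleaner and explains \emph{why} the Gram matrix has the rank-one-per-column structure $M_{\ell,\mathbf p}=c\,\partial_{\mathbf w_\ell}\lambda_{K,\mathbf p}$, whereas in the paper this comes out only at the end of the calculation. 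The price is the same: to conclude you must still show $C_0\neq 0$, and that evaluation is exactly the content of the paper's Appendix~\ref{AppJacobi} computation (your diagonal/off-diagonal split is the counterpart of the two cases in $I_{\mathbf y}$). So the two proofs differ in where the work is placed---structure first versus computation first---but rely on the same analytic core.
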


\begin{proof}
We choose $K=K_{1}$, where $K_{1}$ is as in Definition \ref{PressCritic} and
Definition \ref{DefCritPout}, respectively. Then,%
\begin{align*}
  \int_{\Omega}p_{k-1}^{E,\mathbf{p}}\operatorname{div}&\left(
B_{k}^{\operatorname*{CR},K}\mathbf{w}_{\ell}\right)  =-\frac{1}{\left\vert
K\right\vert }\int_{K}P_{k-1}^{\left(  0,3\right)  }\left(  1-2\lambda
_{K,\mathbf{p}}\right)  \operatorname{div}\left( \! \left(  \sum_{\mathbf{y}%
\in\mathcal{V}\left(  K\right)  }\!Q_{k}\!\left(  1-2\lambda_{K,\mathbf{y}%
}\right)  -1\right)\!  \mathbf{w}_{\ell}\right) \\
& =\frac{-1}{\left(  k+1\right)  \left\vert K\right\vert }\int%
_{K}P_{k-1}^{\left(  0,3\right)  }(1-2\lambda_{K,\mathbf{p}}%
)\operatorname{div}\left(  \left(  \sum_{\mathbf{y}\in\mathcal{V}\left(
K\right)  }\left(  L_{k+1}-L_{k}\right)  ^{\prime}\left(  1-2\lambda
_{K,\mathbf{y}}\right)  \right)  \mathbf{w}_{\ell}\right) \\
& =\frac{2}{\left(  k+1\right)  \left\vert K\right\vert }\int%
_{K}P_{k-1}^{\left(  0,3\right)  }\left(  1-2\lambda_{K,\mathbf{p}}\right)
\left(  \sum_{\mathbf{y}\in\mathcal{V}\left(  K\right)  }\left(  L_{k+1}%
-L_{k}\right)  ^{\prime\prime}\left(  1-2{\lambda}_{K,\mathbf{y}}\right)
\partial_{\mathbf{w}_{\ell}}\lambda_{K,\mathbf{y}}\right) \\
& =\frac{2}{\left(  k+1\right)  \left\vert K\right\vert }%
\sum_{\mathbf{y}\in\mathcal{V}\left(  K\right)  }\partial_{\mathbf{w}_{\ell}%
}\lambda_{K,\mathbf{y}}\int_{K}P_{k-1}^{\left(  0,3\right)  }\left(
1-2\lambda_{K,\mathbf{p}}\right)  \left(  \left(  L_{k+1}-L_{k}\right)
^{\prime\prime}\left(  1-2{\lambda}_{K,\mathbf{y}}\right)  \right)  .
\end{align*}
We employ an affine pullback $\chi_{K,\mathbf{y}}:\hat{K}\rightarrow K$ to the
reference element which depends on the summation index $\mathbf{y}%
\in\mathcal{V}\left(  K\right)  $ such that $\chi_{K,\mathbf{y}}^{-1}\left(
\mathbf{p}\right)  =\mathbf{\hat{p}}=\left(  1,0,0\right)  ^{T}$ and therefore
$\lambda_{K,\mathbf{p}}\circ\chi_{K,\mathbf{y}}\left(  \mathbf{x}\right)
=x_{1}$. For $\mathbf{y}\in\mathcal{V}\left(  K\right)  \backslash\left\{
\mathbf{p}\right\}  $, we require in addition that $\chi_{K,\mathbf{y}}$
satisfies $\chi_{K,\mathbf{y}}^{-1}\left(  \mathbf{y}\right)  =\mathbf{\hat
{y}}=\left(  0,1,0\right)  ^{T}$ and $\lambda_{K,\mathbf{y}}\circ
\chi_{K,\mathbf{y}}\left(  \mathbf{x}\right)  =x_{2}$. Then,
\[
\int_{\Omega}p_{k-1}^{E,\mathbf{p}}\operatorname{div}\left(
B_{k}^{\operatorname*{CR},K}\mathbf{w}_{\ell}\right)  =\frac{2}{k+1}\frac
{1}{\left\vert \hat{K}\right\vert }\sum_{\mathbf{y}\in\mathcal{V}\left(
K\right)  
}\partial_{\mathbf{w}_{\ell}}\lambda_{K,\mathbf{y}} 
I_{\mathbf{y}},
\]
with%
\begin{equation}
I_{\mathbf{y}}:=%
\begin{cases}
\int_{\hat{K}}P_{k-1}^{(0,3)}(1-2x_{1})\left(  L_{k+1}-L_{k}\right)
^{\prime\prime}\left(  1-2x_{1}\right)  & \qquad\mathbf{y}=\mathbf{p},\\
\int_{\hat{K}}P_{k-1}^{\left(  0,3\right)  }\left(  1-2x_{1}\right)  \left(
L_{k+1}-L_{k}\right)  ^{\prime\prime}\left(  1-2{x}_{2}\right)  &
\qquad\mathbf{y}\in\mathcal{V}\left(  K\right)  \backslash\left\{
\mathbf{p}\right\}  .
\end{cases}
\label{DefIy}%
\end{equation}
These integrals are computed in Appendix 
\ref{AppJacobi}. From there, we get
\begin{equation}
I_{\mathbf{y}}=\left\{
\begin{array}
[c]{ll}%
\frac{k+1}{4} & \mathbf{y}=\mathbf{p},\\
\frac{1}{2}\left(  -1\right)  ^{k-1} & \mathbf{y}\in\mathcal{V}\left(
K\right)  \backslash\left\{  \mathbf{p}\right\}  .
\end{array}
\right.  \label{Iyvalue}%
\end{equation}
Hence, by taking into account that $k$ is even, we have%
\[
\int_{\Omega}p_{k-1}^{E,\mathbf{p}}\operatorname{div}\left(
B_{k}^{\operatorname*{CR},K}\mathbf{w}_{\ell}\right)  =\frac{1}{k+1}\frac
{1}{\left\vert \hat{K}\right\vert }\left(  \frac{k+1}{2}\partial
_{\mathbf{w}_{\ell}}\lambda_{K,\mathbf{p}}-\sum_{\mathbf{y}\in\mathcal{V}%
\left(  K\right)  \backslash\left\{  \mathbf{p}\right\}  }\partial
_{\mathbf{w}_{\ell}}\lambda_{K,\mathbf{y}}\right)  .
\]
We use $\sum_{\mathbf{y}\in\mathcal{V}\left(  K\right)  }\lambda
_{k,\mathbf{y}}=1$, and obtain%
\[
\int_{\Omega}p_{k-1}^{E,\mathbf{p}}\operatorname{div}\left(
B_{k}^{\operatorname*{CR},K}\mathbf{w}_{\ell}\right)  =\frac{k+3}{k+1}\frac
{1}{2\left\vert \hat{K}\right\vert }\partial_{\mathbf{w}_{\ell}}%
\lambda_{K,\mathbf{p}}.
\]
The assertion is proved if there exist two vectors $\mathbf{s},\mathbf{t}%
\in\mathbb{R}^{3}$ such that the Gram's matrix \(\mathbf{m}=\left(
\partial_{\mathbf{r}}\lambda_{K,\mathbf{p}}\right)  _{\substack{\mathbf{p}%
\in\mathcal{V}\left(  E\right)  \\\mathbf{r}\in\left\{  \mathbf{s}%
,\mathbf{t}\right\}  }}\in\mathbb{R}^{2\times2}\) is regular. Let
$\mathbf{v}\in\mathcal{V}\left(  K\right)  \backslash\mathcal{V}\left(
E\right)  $ and $E=\left[  \mathbf{p},\mathbf{q}\right]  $. We choose
$\mathbf{s}=\mathbf{q}-\mathbf{p}$ and $\mathbf{t}=\mathbf{q}-\mathbf{v}$ to
obtain $\partial_{\mathbf{s}}\lambda_{K,\mathbf{p}}=-1$, $\partial
_{\mathbf{s}}\lambda_{K,\mathbf{q}}=1$, $\partial_{\mathbf{t}}\lambda
_{K,\mathbf{p}}=0$, $\partial_{\mathbf{t}}\lambda_{K,\mathbf{q}}=1$. This
implies that $\mathbf{m}$ is regular and the theorem is proved.%
\end{proof}

\subsection{Stabilization for odd polynomial degree}

In this section, we prove an analogue to Theorem \ref{ThmEven} for odd
$k$. However, we need more Crouzeix-Raviart functions in order to eliminate
the critical pressures as can be seen from the following theorem.

\begin{theorem}
Let $k\geq3$ be odd. Let \(E\in \mathcal{E}\) be a critical edge and assume 
that there is a tetrahedron
\(K\in \mathcal{T}_E\) such that there are two facets \(F,G\in 
\mathcal{F}(K)\) which satisfy 
 \[F,G\in 
 \mathcal{F}_\Omega, \qquad E\subset F,\qquad E\not\subset G.\]
 For
$\mathbf{p}\in\mathcal{V}\left(  E\right)  $, consider the critical pressure 
functions $p_{k-1}^{E,\mathbf{p}}$ defined in Definition \ref{PressCritic}  (if 
\(E\in 
\mathcal{E}_{\Omega}\)) or
Definition \ref{DefCritPout} (if \(E\in\mathcal{E}_{\partial\Omega}\)). For any 
$F^\prime\in\mathcal{F}_{\Omega}$, let
$\mathbf{w}_{\ell}^{F^\prime}\in\mathbb{R}^{3}$, $1\leq\ell\leq3$, denote some 
basis
in $\mathbb{R}^{3}$. Then, any function $p\in\operatorname*{span}\left\{
p_{k-1}^{E,\mathbf{p}}:\mathbf{p}\in\mathcal{V}\left(  E\right)  \right\}  $
which satisfies
\[
\left(  p,\operatorname*{div}\mathbf{v}\right)  _{L^{2}\left(  \Omega\right)  
}=0\quad\forall\mathbf{v}\in\operatorname*{span}\left\{
B_{k}^{\operatorname*{CR},F^\prime}\mathbf{w}_{\ell}^{F^\prime}:1\leq\ell\leq 3,
F^\prime\in\{F,G\}\right\}
\]
is the zero function.
\end{theorem}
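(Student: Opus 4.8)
The plan is to imitate the even-degree argument of Theorem \ref{ThmEven}, but to exploit the fact that for odd $k$ the non-conforming functions are attached to \emph{facets} rather than to the simplex, which is exactly why two facets $F$ and $G$ of the distinguished tetrahedron $K$ are needed. Write $E=\left[\mathbf{p}_{1},\mathbf{p}_{2}\right]$ and, without loss of generality, let $G$ be the interior facet of $K$ opposite the endpoint $\mathbf{p}_{2}$, so that $\mathbf{p}_{2}$ is the vertex of $K$ opposite $G$, whereas $F\supset E$ and the vertex of $K$ opposite $F$ is distinct from $\mathbf{p}_{1},\mathbf{p}_{2}$. A general element of the pressure space is $p=a\,p_{k-1}^{E,\mathbf{p}_{1}}+b\,p_{k-1}^{E,\mathbf{p}_{2}}$, and since the $\mathbf{w}_{\ell}^{F^{\prime}}$ form bases, the hypothesis is equivalent to $\left(p,\operatorname{div}\left(B_{k}^{\operatorname{CR},F^{\prime}}\mathbf{w}\right)\right)_{L^{2}\left(\Omega\right)}=0$ for every $\mathbf{w}\in\mathbb{R}^{3}$ and every $F^{\prime}\in\left\{F,G\right\}$. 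The goal is to convert these two vanishing conditions into a $2\times2$ homogeneous system for $\left(a,b\right)$ with non-vanishing determinant. The argument is uniform for $E\in\mathcal{E}_{\Omega}$ and $E\in\mathcal{E}_{\partial\Omega}$, since only the local structure of $\omega_{E}$ near $K$ enters.

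First I would compute the local pairings. On a tetrahedron $K^{\prime}\in\mathcal{T}_{E}$ one has $B_{k}^{\operatorname{CR},F^{\prime}}|_{K^{\prime}}=Q_{k}\left(1-2\lambda_{K^{\prime},\mathbf{z}^{\prime}}\right)$ with $\mathbf{z}^{\prime}$ the vertex of $K^{\prime}$ opposite $F^{\prime}$, so that $\operatorname{div}\left(B_{k}^{\operatorname{CR},F^{\prime}}\mathbf{w}\right)=-\tfrac{2}{k+1}\left(L_{k+1}-L_{k}\right)^{\prime\prime}\left(1-2\lambda_{K^{\prime},\mathbf{z}^{\prime}}\right)\partial_{\mathbf{w}}\lambda_{K^{\prime},\mathbf{z}^{\prime}}$. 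Inserting $p_{k-1}^{E,\mathbf{p}}|_{K^{\prime}}$ and pulling back to $\hat{K}$ reduces each summand to the integrals $I_{\mathbf{y}}$ already evaluated in (\ref{DefIy})--(\ref{Iyvalue}); here $k$ is odd, so $\left(-1\right)^{k-1}=1$ and $I_{\mathbf{y}}=\tfrac{k+1}{4}$ when $\mathbf{z}^{\prime}=\mathbf{p}$ while $I_{\mathbf{y}}=\tfrac12$ when $\mathbf{z}^{\prime}\neq\mathbf{p}$. This yields, per simplex $K_{i}$, a contribution $\tfrac{\left(-1\right)^{i+1}}{2\left\vert\hat{K}\right\vert}\partial_{\mathbf{w}}\lambda_{K_{i},\mathbf{p}}$ in the coincident case $\mathbf{z}^{\prime}=\mathbf{p}$ and $\tfrac{\left(-1\right)^{i+1}}{\left(k+1\right)\left\vert\hat{K}\right\vert}\partial_{\mathbf{w}}\lambda_{K_{i},\mathbf{z}^{\prime}}$ in the non-coincident case.

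Next I would assemble the two facet sums, carefully tracking supports and signs. For $G$ only $K$ contributes, because the second tetrahedron sharing $G$ contains $\mathbf{p}_{1}$ but not $\mathbf{p}_{2}$, hence not $E$, so $p_{k-1}^{E,\mathbf{p}}$ vanishes there; since the vertex opposite $G$ is $\mathbf{p}_{2}$, pairing against $p_{k-1}^{E,\mathbf{p}_{2}}$ falls in the coincident case and against $p_{k-1}^{E,\mathbf{p}_{1}}$ in the non-coincident case, so $\left(p,\operatorname{div}\left(B_{k}^{\operatorname{CR},G}\mathbf{w}\right)\right)$ is proportional to $\left(\tfrac{a}{k+1}+\tfrac{b}{2}\right)\partial_{\mathbf{w}}\lambda_{K,\mathbf{p}_{2}}$. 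For $F$, by contrast, both adjacent tetrahedra lie in $\omega_{E}$ and are consecutive simplices in the patch enumeration (Notation \ref{NotE} or Def.\ \ref{DefCritPout}), hence carry opposite signs; moreover the vertices opposite $F$ are distinct from $\mathbf{p}_{1},\mathbf{p}_{2}$, so \emph{both} pressures fall in the non-coincident case and the pairing is the \emph{same} for $\mathbf{p}_{1}$ and $\mathbf{p}_{2}$, giving $\left(p,\operatorname{div}\left(B_{k}^{\operatorname{CR},F}\mathbf{w}\right)\right)=\left(a+b\right)c_{F}\left(\mathbf{w}\right)$.

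Finally, since $\nabla\lambda_{K,\mathbf{p}_{2}}\neq\mathbf{0}$, the $G$-condition forces $\tfrac{a}{k+1}+\tfrac{b}{2}=0$; and because the two simplices sharing $F$ enter with opposite signs, the two opposite-vertex gradients (which point into opposite half-spaces across $F$) combine into a nonzero multiple of the facet normal rather than cancelling, so $c_{F}\not\equiv0$ and the $F$-condition forces $a+b=0$. The resulting $2\times2$ system has determinant $\tfrac12-\tfrac1{k+1}=\tfrac{k-1}{2\left(k+1\right)}\neq0$ for $k\geq3$, whence $a=b=0$ and $p=0$. I expect the main obstacle to be organizational rather than analytic: one must identify correctly, for each facet, which adjacent simplices actually meet $\operatorname{supp}p_{k-1}^{E,\mathbf{p}}$ and with which signs, and verify $c_{F}\not\equiv0$ — which is precisely why the hypothesis requires $F$ to be an \emph{interior} facet through $E$ (so two oppositely signed simplices of $\omega_{E}$ meet along it). It is exactly the inability of $F$ alone to separate $\mathbf{p}_{1}$ from $\mathbf{p}_{2}$ (its row is constant in $\mathbf{p}$) that makes the second facet $G$ indispensable.
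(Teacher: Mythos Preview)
Your proposal is correct and follows essentially the same approach as the paper's proof: you reduce the pairings $\left(p_{k-1}^{E,\mathbf{p}},\operatorname{div}\left(B_{k}^{\operatorname{CR},F'}\mathbf{w}\right)\right)$ to the integrals $I_{\mathbf{y}}$ from (\ref{DefIy})--(\ref{Iyvalue}), observe that the $F$-pairing is independent of $\mathbf{p}$ (so $F$ alone yields only $a+b=0$) while the $G$-pairing distinguishes the two endpoints, and conclude via a nonsingular $2\times2$ system whose determinant vanishes only for $k=1$. The paper organizes the endgame slightly differently---it picks specific test directions $\mathbf{s}$ (the unit normal to $F$) and $\mathbf{t}$ (an edge vector at $\mathbf{y}$) to write an explicit Gram matrix $\left(\begin{smallmatrix}\theta & 1\\ \theta & (k+1)/2\end{smallmatrix}\right)$---whereas you argue abstractly that $\nabla\lambda_{K,\mathbf{p}_{2}}\neq\mathbf{0}$ and that the combination $\nabla\lambda_{K_{1},\mathbf{v}_{1}}-\nabla\lambda_{K_{2},\mathbf{v}_{2}}$ is a nonzero normal to $F$; these are equivalent formulations of the same rank argument.
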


\begin{proof}
We will use the two facets \(F, G\) with corresponding
Crouzeix-Raviart functions as test functions and derive the assertion. First,
let $F\in\mathcal{F}_{\Omega}$ be such that $E\subset F$. We recall,
that $B_{k}^{\operatorname*{CR},F}$ has support on two tetrahedra $K_{1}%
,K_{2}$ and $K_{1}\cup K_{2}= \omega_{E}\cap \omega_{F}$. For
$\mathbf{s}\in\mathbb{R}^{3}\backslash\left\{  0\right\}  $, we compute%
\[
\left(  p_{k-1}^{E,\mathbf{p}},\operatorname*{div}\left(  B_{k}%
^{\operatorname*{CR},F}\mathbf{s}\right)  \right)  _{L^{2}\left(
\Omega\right)  }=\sum_{i=1}^{2}\frac{\left(  -1\right)  ^{i}%
}{\left\vert K_{i}\right\vert }\int_{K_{i}}P_{k-1}^{\left(  0,3\right)
}\left(  1-2\lambda_{K_{i},\mathbf{p}}\right)  \operatorname{div}\left(
Q_{k}(1-2\lambda_{K_{i},\mathbf{v}_{i}})\mathbf{s}\right)  ,
\]
where $\mathbf{v}_{i}$ is the vertex in $\mathcal{V}\left(  K_{i}\right)  $
opposite to $F$. For a single summand we get%
\begin{align*}
\int_{K_{i}}  &  P_{k-1}^{\left(  0,3\right)  }(1-2\lambda_{K_{i},\mathbf{p}%
})\operatorname{div}\left(  Q_{k}\left(  1-2\lambda_{K_{i},\mathbf{v}_{i}%
}\right)  \mathbf{s}\right) \\
&  =-2\left(  \partial_{\mathbf{s}}\lambda_{K_{i},\mathbf{v}_{i}}\right)
\int_{K_{i}}P_{k-1}^{\left(  0,3\right)  }\left(  1-2\lambda_{K_{i}%
,\mathbf{p}}\right)  Q_{k}^{\prime}\left(  1-2\lambda_{K_{i},\mathbf{v}_{i}%
}\right) \\
&  =-\frac{2}{k+1}\left(  \partial_{\mathbf{s}}\lambda_{K_{i},\mathbf{v}_{i}%
}\right)  \frac{|K_{i}|}{|\hat{K}|}\int_{\hat{K}}P_{k-1}^{\left(  0,3\right)
}\left(  1-2\lambda_{\mathbf{\hat{p}}}\right)  \left(  L_{k+1}-L_{k}\right)
^{\prime\prime}\left(  1-2\lambda_{\mathbf{\hat{v}}}\right)  ,
\end{align*}
where we used an affine pullback $\chi_{K_{i}}:\hat{K}\rightarrow K_{i}$ which
satisfies $\chi_{K_{i}}^{-1}\left(  \mathbf{p}\right)  =\mathbf{\hat{p}%
}=\left(  1,0,0\right)  ^{T}$ and $\chi_{K_{i}}^{-1}\left(  \mathbf{v}%
_{i}\right)  =\mathbf{\hat{v}}=\left(  0,1,0\right)  ^{T}$. Recall the
computations (\ref{DefIy}), (\ref{Iyvalue}) from the previous section. Then,%
\[
\int_{\hat{K}}P_{k-1}^{\left(  0,3\right)  }\left(  1-2\lambda_{\mathbf{\hat
{p}}}\right)  \left(  L_{k+1}-L_{k}\right)  ^{\prime\prime}\left(
1-2\lambda_{\mathbf{\hat{v}}}\right)  =\frac{1}{2}\left(  -1\right)  ^{k-1}.
\]
The combination of these computations leads to%
\begin{align*}
\left(  p_{k-1}^{E,\mathbf{p}},\operatorname*{div}B_{k}^{\operatorname*{CR}%
,F}\mathbf{s}\right)  _{L^{2}\left(  \omega_{\mathbf{z}}\right)  }  &
=\sum_{i=1}^{2}\frac{\left(  -1\right)  ^{i}}{\left\vert K_{i}\right\vert
}\int_{K_{i}}P_{k-1}^{\left(  0,3\right)  }\left(  1-2\lambda_{K_{i}%
,\mathbf{p}}\right)  \operatorname{div}\left(  Q_{k}(1-2\lambda_{K_{i}%
,\mathbf{v}_{i}})\mathbf{s}\right) \\
&  =%
\frac{\left(  -1\right)  ^{k-1}}{\left(  k+1\right)  \left\vert \hat
{K}\right\vert }\partial_{\mathbf{s}}\left(  \lambda_{K_{1},\mathbf{v}_{1}%
}-\lambda_{K_{2},\mathbf{v}_{2}}\right)  .
\end{align*}%
Since the right-hand side does not depend on $\mathbf{p}$, the test functions
$B_{k}^{\operatorname*{CR},F}\mathbf{s}$, $\mathbf{s}\in\mathbb{R}^{3}$, are
not sufficient to eliminate both critical functions $p_{k-1}^{E,\mathbf{p}}$,
$\mathbf{p}\in\mathcal{V}\left(  E\right)  $.

Next, we choose a facet for another Crouzeix-Raviart function. Let 
$K:=K_{1}$ and $G\in\mathcal{F}_{\Omega}\backslash
\left\{  F\right\}  $ be an inner facet which satisfies \(E\not\subset G\). 
This 
implies
\[\mathcal{T}_{G}\cap\mathcal{T}_{E}=\left\{
K\right\}  .
\]
Let $\mathbf{y}$ denote the vertex in $K$ opposite to $G$ and hence
$\mathbf{y}\in\mathcal{V}\left(  E\right)  $. Next, we compute%
\[
\left(  p_{k-1}^{E,\mathbf{p}},\operatorname*{div}B_{k}^{\operatorname*{CR}%
,G}\mathbf{t}\right)  _{L^{2}\left( \Omega\right)  }=-\frac
{1}{\left\vert K\right\vert }\int_{K}P_{k-1}^{\left(  0,3\right)  }\left(
1-2\lambda_{K,\mathbf{p}}\right)  \operatorname{div}\left(  Q_{k}\left(
1-2\lambda_{K,\mathbf{y}}\right)  \mathbf{t}\right)  .
\]
For $\mathbf{p}=\mathbf{y}$, we employ an affine transform $\chi_{K}:\hat
{K}\rightarrow K$ with $\chi_{K}^{-1}\left(  \mathbf{p}\right)  =\mathbf{\hat
{p}}=\left(  1,0,0\right)  ^{T}$. Then,
\[
\left(  p_{k-1}^{E,\mathbf{p}},\operatorname*{div}B_{k}^{\operatorname*{CR}%
,G}\mathbf{t}\right)  _{L^{2}\left( \Omega\right)  }=\frac
{2}{\left\vert \hat{K}\right\vert }\left(  \partial_{\mathbf{t}}%
\lambda_{K,\mathbf{y}}\right)  \int_{\hat{K}}P_{k-1}^{\left(  0,3\right)
}\left(  1-2\lambda_{\mathbf{\hat{p}}}\right)  Q_{k}^{\prime}\left(
1-2\lambda_{\mathbf{\hat{p}}}\right)  \overset{\text{(\ref{Iyvalue})}}{=}%
\frac{1}{2\left\vert \hat{K}\right\vert }\left(  \partial_{\mathbf{t}}%
\lambda_{K,\mathbf{y}}\right)  .
\]%
For $\mathbf{p}\neq\mathbf{y}$, we employ an affine transform $\chi_{K}:\hat
{K}\rightarrow K$ with $\chi_{K}^{-1}\left(  \mathbf{p}\right)  =\mathbf{\hat
{p}}=\left(  1,0,0\right)  ^{T}$ and $\chi_{K}^{-1}\left(  \mathbf{y}\right)
=\mathbf{\hat{y}}=\left(  0,1,0\right)  ^{T}$. Then,
\begin{align*}
\left(  p_{k-1}^{E,\mathbf{p}},\operatorname*{div}B_{k}^{\operatorname*{CR}%
,G}\mathbf{t}\right)  _{L^{2}\left( \Omega\right)  }&=\frac
{2}{\left\vert \hat{K}\right\vert }\left(  \partial_{\mathbf{t}}%
\lambda_{K,\mathbf{y}}\right)  \int_{\hat{K}}P_{k-1}^{\left(  0,3\right)
}\left(  1-2\lambda_{\mathbf{\hat{p}}}\right)  Q_{k}^{\prime}\left(
1-2\lambda_{\mathbf{\hat{y}}}\right)  \\
&\overset{\text{(\ref{Iyvalue})}}{=}%
\frac{1}{\left(  k+1\right)  \left\vert \hat{K}\right\vert }\left(
\partial_{\mathbf{t}}\lambda_{K,\mathbf{y}}\right)  .
\end{align*}%
We define the Gram's matrix%
\[
\mathbf{m}= 
\begin{bmatrix}
\partial_{\mathbf{s}}\left(  \lambda_{K_{1},\mathbf{v}_{1}}-\lambda
_{K_{2},\mathbf{v}_{2}}\right)  &\partial_{\mathbf{t}}%
\lambda_{K,\mathbf{y}}\\
\partial_{\mathbf{s}}\left(  \lambda_{K_{1},\mathbf{v}_{1}}-\lambda
_{K_{2},\mathbf{v}_{2}}\right)  & \frac{k+1}{2}\partial_{\mathbf{t}%
}\lambda_{K,\mathbf{y}}
\end{bmatrix},
\]
and choose $\mathbf{s}$ as the unit vector which is orthogonal to the facet
$F$ and points into $K_{2}$. In this way, $\partial_{\mathbf{s}}\left(
\lambda_{K_{1},\mathbf{v}_{1}}-\lambda_{K_{2},\mathbf{v}_{2}}\right)
=:\theta<0$. We choose $\mathbf{t}:=\mathbf{y}-\mathbf{u}$ for some
$\mathbf{u}\in\mathcal{V}\left(  K\right)  \backslash\left\{  \mathbf{y}%
\right\}  $. Hence, $\partial_{\mathbf{t}}\lambda_{K,\mathbf{y}}=1$ and%
\[
\mathbf{m}=\begin{bmatrix}
\theta & 1\\
\theta & \frac{k+1}{2}%
\end{bmatrix}
\]
Since $k\geq3$ this matrix is non-singular and this implies the claim.%
\end{proof}

%%%%%%%%%%%%%%%%%%%%%%%%%%%%%%%%%%%%%%%%%%%%%%%%%%%%%%%%%%%%%%%%%%%%%%%%%%%%%%%%%%
\appendix
%%%%%%%%%%%%%%%%%%%%%%%%%%%%%%%%%%%%%%%%%%%%%%%%%%%%%%%%%%%%%%%%%%%%%%%%%%%%%%%%%%

%%%%%%%%%%%%%%%%%%%%%%%%%%%%%%%%%%%%%%%%%%%%%%%%%%%%%%%%%%%%%%%%%%%%%%%%%%%%%%%%%%
\section{Non-conforming Crouzeix-Raviart functions in higher
	dimensions}
\label{AppHigherD}
%%%%%%%%%%%%%%%%%%%%%%%%%%%%%%%%%%%%%%%%%%%%%%%%%%%%%%%%%%%%%%%%%%%%%%%%%%%%%%%%%%

The construction of non-conforming Crouzeix-Raviart functions in three
dimensions is based on the definition of the univariate polynomial $Q_{k}%
\in\mathbb{P}_{k}\left(  \left[  -1,1\right]  \right)  $ in (\ref{defQk2}). In
this section, we generalize this construction to arbitrary dimension
$d\geq2$ and $Q_{k}$ in (\ref{defQk2}) will be a special case for $d=3$.

Let $K\subset\mathbb{R}^{d}$ be a closed simplex with vertices $\mathbf{z}%
_{i}$, $1\leq i\leq d+1$, which form the set $\mathcal{V}\left(  K\right)  $.
The $\left(  d-1\right)  $-dimensional facet in $\partial K$ opposite to
$\mathbf{z}\in\mathcal{V}\left(  K\right)  $ is denoted by $F_{\mathbf{z}}$
and the facets are collected in the set $\mathcal{F}\left(  K\right)  $. The
barycentric coordinates $\lambda_{K,\mathbf{z}}$, $\mathbf{z}\in
\mathcal{V}\left(  K\right)  $, are characterized by the conditions
$\lambda_{K,\mathbf{z}}\in\mathbb{P}_{1}\left(  K\right)  $, $\lambda
_{K,\mathbf{z}}\left(  \mathbf{y}\right)  =\delta_{\mathbf{z},\mathbf{y}}$ for
all $\mathbf{y,z}\in\mathcal{V}\left(  K\right)  $.

Our goal is to define a polynomial $Q_{d,k}\in\mathbb{P}_{k}\left(  \left[
-1,1\right]  \right)  $, $d\geq2$, $k\geq1$, such that the composition
$Q_{d,k}\left(  1-2\lambda_{K,\mathbf{z}}\right)  $ satisfies%
\begin{equation}
\left.  Q_{d,k}\left(  1-2\lambda_{K,\mathbf{z}}\right)  \right\vert
_{F_{\mathbf{z}}}=1\text{ and }\forall F\in\mathcal{F}\left(  K\right)
\backslash\left\{  F_{\mathbf{z}}\right\}  :\;\int_{F}Q_{d,k}\left(
1-2\lambda_{K,\mathbf{z}}\right)  q=0\quad\forall q\in\mathbb{P}_{k-1}\left(
F\right)  .\label{PropQdk}%
\end{equation}
Following the construction in Section \ref{SecCanCR}, the non-conforming
Crouzeix-Raviart functions for even polynomial degree $k\geq2$ are supported
on a single simplex $K$ and given by%
\[
B_{d,k}^{\operatorname*{CR},K}:=\left\{
\begin{array}
[c]{ll}%
\left(  \sum_{\mathbf{z}\in\mathcal{V}\left(  K\right)  }Q_{d,k}\left(
1-2\lambda_{K,\mathbf{z}}\right)  \right)  -1 & \text{on }K\text{,}\\
0 & \text{otherwise.}%
\end{array}
\right.
\]
For odd polynomial degree $k\geq1$ they are supported on the two adjacent
simplices $K_{1}$, $K_{2}$ of an inner facet $F$ and given by%
\begin{equation}
B_{d,k}^{\operatorname*{CR},F}:=\left\{
\begin{array}
[c]{ll}%
Q_{d,k}\left(  1-2\lambda_{K,\mathbf{z}}\right)   & \text{for }K\in\left\{
K_{1},K_{2}\right\}  ,\\
0 & \text{otherwise,}%
\end{array}
\right.
\end{equation}
where $\lambda_{K,\mathbf{z}}$ denotes the barycentric coordinate for the
vertex $\mathbf{z}\in\mathcal{V}\left(  K\right)  $ opposite to $F$.

Properties (\ref{PropQdk}) allow us to repeat the arguments in the proof of
Theorem \ref{thm:CR-tetrahedron} which then imply that $B_{d,k}%
^{\operatorname*{CR},K}$ and $B_{d,k}^{\operatorname*{CR},F}$ belong to the
Crouzeix-Raviart space $\operatorname*{CR}\nolimits_{k,0}^{\max}\left(
\mathcal{T}\right)  $ for a conforming simplicial finite element mesh
$\mathcal{T}$ of a $d$-dimensional polytope $\Omega$.

Finally, we construct the polynomial $Q_{d,k}$. For ease of notation, we set
$m=d-2$ and define the polynomial $P_{k+m}\in\mathbb{P}_{k+m}\left(  \left[
-1,1\right]  \right)  $ as a linear combination of Legendre polynomials%
\[
P_{k+m}=\sum_{\ell=0}^{m}\beta_{k,\ell}L_{k+\ell}.
\]
The coefficients $\beta_{k,\ell}$ are defined as the solutions of the linear
system%
	\begin{equation}
\frac{1}{2^{n}n!}\sum_{\ell=0}^{m}\beta_{k,\ell}\frac{\left(  
	\ell+k+n\right)
	!}{\left(  \ell+k-n\right)  !}=\delta_{n,m},\qquad0\leq n\leq 
m,\label{Condbetakl}%
\end{equation}
where we use the convention that $\mu!/\nu!=0$ for $\mu\geq0$ and $\nu<0$.
\begin{lemma}
	Let $d\geq2$ and set $m=d-2$. The polynomial
	\[
	Q_{d,k}=P_{k+m}^{\left(  m\right)  }%
	\]
	belongs to $\mathbb{P}_{k}\left(  \left[  -1,1\right]  \right)  $ and
	satisfies the conditions in (\ref{PropQdk}).
\end{lemma}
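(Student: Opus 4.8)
The plan is to verify the three assertions of the lemma in turn: the degree bound $Q_{d,k}\in\mathbb{P}_k$, the normalization $\left.Q_{d,k}(1-2\lambda_{K,\mathbf{z}})\right|_{F_{\mathbf{z}}}=1$, and the orthogonality on the remaining facets. The degree claim is immediate, since $P_{k+m}$ is a linear combination of $L_k,\dots,L_{k+m}$ and hence has degree at most $k+m$, so its $m$-th derivative $Q_{d,k}=P_{k+m}^{(m)}$ lies in $\mathbb{P}_k$. For the normalization, because $\lambda_{K,\mathbf{z}}=0$ on $F_{\mathbf{z}}$ it suffices to show $Q_{d,k}(1)=P_{k+m}^{(m)}(1)=1$, which I will obtain from the defining system.

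The entire argument hinges on reading the linear system (\ref{Condbetakl}) through the classical endpoint-derivative formula for Legendre polynomials,
\[
L_n^{(j)}(1)=\frac{(n+j)!}{2^{j}\,j!\,(n-j)!},
\]
a consequence of the same relations used for (\ref{Lendpoints}). Indeed, for $0\le n\le m$,
\[
P_{k+m}^{(n)}(1)=\sum_{\ell=0}^{m}\beta_{k,\ell}L_{k+\ell}^{(n)}(1)=\frac{1}{2^{n}n!}\sum_{\ell=0}^{m}\beta_{k,\ell}\frac{(k+\ell+n)!}{(k+\ell-n)!}=\delta_{n,m}
\]
by (\ref{Condbetakl}). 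Thus the single equation $n=m$ yields $Q_{d,k}(1)=P_{k+m}^{(m)}(1)=1$, establishing the first condition in (\ref{PropQdk}); while the equations $0\le n\le m-1$ say precisely that $P_{k+m}$ vanishes to order $m$ at $t=1$, i.e. $P_{k+m}^{(n)}(1)=0$ for $n<m$. This $m$-fold zero at the endpoint is the structural fact that drives the orthogonality.

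For the orthogonality I would fix a facet $F\in\mathcal{F}(K)\setminus\{F_{\mathbf{z}}\}$, say opposite a vertex $\mathbf{y}\neq\mathbf{z}$, pull $F$ back to the reference $(d-1)$-simplex using $\lambda_{K,\mathbf{z}}$ as the first barycentric coordinate $x_1$, and test against a monomial basis of $\mathbb{P}_{k-1}(F)$. Carrying out the Dirichlet integral over the remaining $m=d-2$ coordinates collapses each integral to a one-dimensional one, and after the substitution $t=1-2x_1$ it takes the form $\int_{-1}^{1}P_{k+m}^{(m)}(t)\,(1-t)^{\alpha}(1+t)^{\beta}\,dt$ with $\beta\ge m$ (the exponent $m$ coming from the $m$ integrated-out variables) and $\alpha+\beta\le k+m-1$. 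Integrating by parts $m$ times moves the derivatives off $P_{k+m}^{(m)}$ onto the weight $w(t)=(1-t)^{\alpha}(1+t)^{\beta}$, and every boundary term vanishes: at $t=-1$ because $(1+t)^{\beta}$ has a zero of order $\beta\ge m$, and at $t=1$ because $P_{k+m}$ has a zero of order $m$ there. What remains is $(-1)^{m}\int_{-1}^{1}P_{k+m}(t)\,w^{(m)}(t)\,dt$; since $\deg w=\alpha+\beta\le k+m-1$ we have $w^{(m)}\in\mathbb{P}_{k-1}$, and as $P_{k+m}$ is a combination of Legendre polynomials of degree $\ge k$, the integral vanishes by Legendre orthogonality.

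The conceptual heart of the proof — and the step I expect to be the main obstacle to \emph{discover} rather than to execute — is recognizing that the two families of conditions in (\ref{Condbetakl}) play completely different roles: the $n=m$ equation is a normalization, while the $n<m$ equations encode an $m$-fold zero of $P_{k+m}$ at $t=1$, which is exactly the endpoint condition needed to discard all boundary terms in the repeated integration by parts. Once this is seen, the Dirichlet reduction and the final appeal to Legendre orthogonality are routine and closely parallel the computation in the proof of Theorem \ref{thm:CR-tetrahedron}. I would also note in passing that solvability of (\ref{Condbetakl}) is presupposed by the statement, and that for $d=3$ ($m=1$) one recovers $P_{k+1}=\tfrac{1}{k+1}(L_{k+1}-L_k)$ and $Q_{3,k}=Q_k$ of (\ref{defQk2}), which serves as a useful consistency check.
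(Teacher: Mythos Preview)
Your proposal is correct and follows essentially the same approach as the paper: both arguments identify the $n=m$ equation in (\ref{Condbetakl}) as the normalization $P_{k+m}^{(m)}(1)=1$ and the $n<m$ equations as the $m$-fold zero of $P_{k+m}$ at $t=1$, then reduce the facet integral via the Dirichlet integral to a one-dimensional integral and kill it by $m$-fold integration by parts, the boundary terms vanishing from the $m$-fold zeros of the weight at one endpoint and of $P_{k+m}$ at the other, and the residual integral by Legendre orthogonality. The only cosmetic differences are that the paper derives the endpoint formula $L_n^{(j)}(1)=\tfrac{(n+j)!}{2^{j}j!(n-j)!}$ via Gegenbauer polynomials rather than citing it, and performs the integration by parts in the $x_1$ variable on $[0,1]$ rather than after your substitution $t=1-2x_1$.
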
%

\begin{proof}
Let $C_{n}^{\left(  \alpha\right)  }\in\mathbb{P}_{n}\left(  \left[
-1,1\right]  \right)  $ denote the Gegenbauer polynomials for $\alpha>-1/2$,
$\alpha\neq0$. We combine \cite[18.7.9]{NIST:DLMF} with \cite[18.9.19]%
{NIST:DLMF} to get for $\ell\leq n$%

\begin{equation}
L_{n}^{\left(  \ell\right)  }=\frac{\left(  2\ell\right)  !}{2^{\ell}\ell
	!}C_{n-\ell}^{\left(  1/2+\ell\right)  }.\label{Gegenbauer}%
\end{equation}
We use \cite[Table 18.6.1]{NIST:DLMF} to obtain%
\[
L_{n}^{\left(  \ell\right)  }\left(  1\right)  =\frac{\left(  2\ell\right)
	!}{2^{\ell}\ell!}\frac{\left(  1+2\ell\right)  _{n-\ell}}{\left(
	n-\ell\right)  !}=\frac{\left(  n+\ell\right)  !}{2^{\ell}\ell!\left(
	n-\ell\right)  !}.
\]
Thus,%
\[
Q_{d,k}\left(  1\right)  =P_{k+m}^{\left(  m\right)  }\left(  1\right)
=\sum_{\ell=0}^{m}\beta_{k,\ell}L_{k+\ell}^{\left(  m\right)  }\left(
1\right)  =\sum_{\ell=0}^{m}\beta_{k,\ell}\frac{\left(  k+\ell+m\right)
	!}{2^{m}m!\left(  k+\ell-m\right)  !}.
\]
Condition (\ref{Condbetakl}) for $n=m$ shows $Q_{d,k}\left(  1\right)  =1$.
Since $Q_{d,k}\left(  \left.  \left(  1-2\lambda_{K,\mathbf{z}}\right)
\right\vert _{F_{\mathbf{z}}}\right)  =Q_{d,k}\left(  1\right)  $ the first
condition in (\ref{PropQdk}) follows.

Next, we prove the second condition in (\ref{PropQdk}). For $\mathbf{z}%
\in\mathcal{V}\left(  K\right)  $, let $F\in\mathcal{F}\left(  K\right)
\backslash\left\{  F_{\mathbf{z}}\right\}  $. We employ an affine pullback
$\chi_{K}:\hat{K}\rightarrow K$ to the reference element
\[
\hat{K}:=\left\{  \mathbf{x}=\left(  x_{i}\right)  _{i=1}^{d}\in
\mathbb{R}_{\geq0}^{d}\mid x_{1}+\ldots+x_{d}\leq1\right\}
\]
in such a way that $\hat{F}:=\left\{  \mathbf{x}=\left(  x_{i}\right)
_{i=1}^{d}\in\hat{K}\mid x_{d}=0\right\}  $ is mapped to $F$. Then, it is
sufficient to prove%
\begin{equation}
\int_{\hat{F}}Q_{d,k}\left(  1-2x_{1}\right)  x_{1}^{\alpha_{1}}\ldots
x_{d-1}^{\alpha_{d-1}}dx_{d-1}\ldots dx_{1}=0,\qquad\forall%
{\boldsymbol\alpha}%
=\left(  \alpha_{i}\right)  _{i=1}^{d-1}\in\mathbb{N}_{0}^{d-1},\quad
\left\vert
\boldsymbol\alpha
\right\vert \leq k-1.\label{CondQdk}%
\end{equation}
We set $%
\boldsymbol\alpha
^{\prime}:=\left(  \alpha_{i}\right)  _{i=2}^{d-1}$ and define%
\[
G\left(  x_{1}\right)  :=\int_{0}^{1-x_{1}}\int_{0}^{1-x_{1}-x_{2}}\ldots
\int_{0}^{1-x_{1}-x_{2}-\ldots-x_{d-2}}x_{1}^{\alpha_{1}}\ldots x_{d-1}%
^{\alpha_{d-1}}dx_{d-1}\ldots dx_{2}.
\]
Hence, condition (\ref{CondQdk}) is equivalent to%
\[
\int_{0}^{1}Q_{d,k}\left(  1-2x_{1}\right)  G\left(  x_{1}\right)
dx_{1}=0.
\]
By using integration by parts, one derives that this condition is equivalent to%
\begin{align}
0=\int_{0}^{1}Q_{d,k}\left(  1-2x_{1}\right)  G\left(
x_{1}\right)  dx_{1}  & =-\sum_{\ell=1}^{m}\left.  2^{-\ell}P_{k+m}^{\left(
	m-\ell\right)  }\left(  1-2x_{1}\right)  G^{\left(  \ell-1\right)  }\left(
x_{1}\right)  \right\vert _{0}^{1}\label{Zerocond1}\\
& +2^{-m}\int_{0}^{1}P_{k+m}\left(  1-2x_{1}\right)  G^{\left(  m\right)
}\left(  x_{1}\right)  dx_{1}.\nonumber
\end{align}
The integral in the definition of $G$ can be evaluated explicitly and we get%
\[
G\left(  x_{1}\right)  =\frac{%
\boldsymbol\alpha
	^{\prime}!}{\left(  m+\left\vert
\boldsymbol\alpha
	^{\prime}\right\vert \right)  !}x_{1}^{\alpha_{1}}\left(  1-x_{1}\right)
^{m+\left\vert
\boldsymbol\alpha
	^{\prime}\right\vert }.
\]
Clearly, $G\in\mathbb{P}_{m+k-1}\left(  \left[  0,1\right]  \right)  $ and
$G^{\left(  \ell-1\right)  }\left(  1\right)  =0$ for all $1\leq\ell\leq m$.
We use this, $G^{\left(  m\right)  }\in\mathbb{P}_{k-1}\left(  \left[
0,1\right]  \right)  $, and the orthogonality relations of the Legendre
polynomials in (\ref{Zerocond1}) to get the equivalent condition%
\[
0=\sum_{\ell=1}^{m}2^{-\ell}P_{k+m}^{\left(  
m-\ell\right)
}\left(  1\right)  G^{\left(  \ell-1\right)  }\left(  0\right)  .
\]
However, the property $P_{k+m}^{\left(  m-\ell\right)  }\left(  1\right)  =0$
for $1\leq\ell\leq m$ follows from the first conditions $n=0,1,\ldots,m-1$ in
the definition of $\beta_{k,\ell}$ in (\ref{Condbetakl}).%
\end{proof}

\begin{remark}
	The $m$-th order derivative of $P_{k+m}$ in the definition of $Q_{d,k}$ can 
	be
	avoided by employing the relation (\ref{Gegenbauer}) for Gegenbauer and
	Legendre polynomials. We get with $m=d-2$%
	\[
	Q_{d,k}=\frac{\left(  2m\right)  !}{2^{m}m!}\sum_{\ell=0}^{m}\beta_{k,\ell
	}C_{k+\ell-m}^{\left(  1/2+m\right)  }%
	\]
	where $C_{\nu}^{\left(  \lambda\right)  }$ is set to zero for $\nu<0$ and we
	emphasize that the superscript $\left(  1/2+m\right)  $ does \textbf{not}
	denote a derivative but is the parameter in the Gegenbauer polynomial 
	related
	to the corresponding weight function $\left(  1-x^{2}\right)  ^{m}$ in the
	orthogonality relation. The coefficients $\beta_{k,\ell}$ can be expressed
	explicitly%
	\[
	\beta_{k,\ell}=\frac{\left(  -1\right)  ^{m-\ell}2^{m}\binom{m}{\ell}\left(
		2k+2\ell+1\right)  }{
		{\textstyle\prod\nolimits_{r=\ell+1}^{m+\ell+1}}
		\left(  2k+r\right)  }%
	\]
	(the verification that $\beta_{k,\ell}$ satisfy (\ref{Condbetakl}) is quite
	tedious and skipped) so that we obtain the fully explicit formula%
	\begin{align*}
	Q_{d,k}=\frac{\left(  2m\right)  
	!}{m!}\nsum[1.5]_{\ell=0}^{m}\left({
	{\displaystyle\prod\limits_{\substack{r=\ell+1\\r\neq2\ell+1}}^{m+\ell+1}}
	\left(  2k+r\right)^{-1}  }\right) {\left(
		-1\right)  ^{m-\ell}\binom{m}{\ell}}C_{k+\ell-m}^{\left(  1/2+m\right)  
		}.
	\end{align*}
	
\end{remark}

%%%%%%%%%%%%%%%%%%%%%%%%%%%%%%%%%%%%%%%%%%%%%%%%%%%%%%%%%%%%%%%%%%%%%%%%%%%%%%%%%%
\section{Computing some integrals involving Jacobi
polynomials}
\label{AppJacobi}
%%%%%%%%%%%%%%%%%%%%%%%%%%%%%%%%%%%%%%%%%%%%%%%%%%%%%%%%%%%%%%%%%%%%%%%%%%%%%%%%%%

In this appendix, we evaluate the integrals $I_{\mathbf{y}}$ defined in
(\ref{DefIy}).

\begin{lemma}
For any $\mathbf{y}\in\mathcal{V}\left(  K\right)  $, the integral
$I_{\mathbf{y}}$ in (\ref{DefIy}) is explicitly given by%
\[
I_{\mathbf{y}}=\left\{
\begin{array}
[c]{ll}%
\frac{k+1}{4} & \mathbf{y}=\mathbf{p},\\
\frac{1}{2}\left(  -1\right)  ^{k-1} & \mathbf{y}\in\mathcal{V}\left(
K\right)  \backslash\left\{  \mathbf{p}\right\}  .
\end{array}
\right.
\]

\end{lemma}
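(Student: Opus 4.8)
The plan is to collapse each three-dimensional integral over $\hat{K}$ to a one-dimensional integral on $[-1,1]$, exploiting that the integrands in (\ref{DefIy}) depend on at most two of the coordinates, and then to use integration by parts together with the orthogonality of the Legendre polynomials. For the case $\mathbf{y}=\mathbf{p}$ the integrand depends only on $x_{1}$, so integrating out $x_{2},x_{3}$ over the triangular cross-section of area $(1-x_{1})^{2}/2$ and substituting $t=1-2x_{1}$ (so $1-x_{1}=(1+t)/2$) yields
\[
I_{\mathbf{p}}=\frac{1}{16}\int_{-1}^{1}P_{k-1}^{(0,3)}(t)\,\bigl(L_{k+1}-L_{k}\bigr)^{\prime\prime}(t)\,(1+t)^{2}\,dt .
\]
Writing $f(t):=P_{k-1}^{(0,3)}(t)(1+t)^{2}$, a polynomial of degree $k+1$, I would integrate by parts twice to move both derivatives from $L_{k+1}-L_{k}$ onto $f$. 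The interior term $\int_{-1}^{1}(L_{k+1}-L_{k})\,f^{\prime\prime}$ vanishes because $\deg f^{\prime\prime}=k-1$ and $L_{k},L_{k+1}$ are orthogonal to all polynomials of degree $\leq k-1$. Only boundary terms survive; since $f(-1)=f^{\prime}(-1)=0$ (the factor $(1+t)^{2}$), and $(L_{k+1}-L_{k})(1)=0$, the sole contribution is $(L_{k+1}-L_{k})^{\prime}(1)\,f(1)$. Using (\ref{Lendpoints}) one gets $(L_{k+1}-L_{k})^{\prime}(1)=\binom{k+2}{2}-\binom{k+1}{2}=k+1$, and (\ref{Pnormalization}) gives $P_{k-1}^{(0,3)}(1)=1$, hence $f(1)=4$, so $I_{\mathbf{p}}=\tfrac{1}{16}\cdot 4(k+1)=\tfrac{k+1}{4}$.

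For $\mathbf{y}\neq\mathbf{p}$ the integrand depends on $x_{1}$ and $x_{2}$; I would first integrate out $x_{3}$ (producing a factor $1-x_{1}-x_{2}$) and then integrate in $x_{2}$ by parts, using $\frac{d}{dx_{2}}(L_{k+1}-L_{k})^{\prime}(1-2x_{2})=-2(L_{k+1}-L_{k})^{\prime\prime}(1-2x_{2})$. The boundary contributions collapse because $(L_{k+1}-L_{k})(1)=0$, leaving an integral in $x_{1}$ that splits into two pieces after the substitution $t=1-2x_{1}$. One piece is proportional to $\int_{-1}^{1}P_{k-1}^{(0,3)}(t)\,(L_{k+1}+L_{k})(t)\,dt$, which vanishes identically since $P_{k-1}^{(0,3)}$ has degree $k-1$ and both $L_{k}$ and $L_{k+1}$ are orthogonal to it. The remaining piece reduces to
\[
I_{\mathbf{y}}=\frac{k+1}{8}\int_{-1}^{1}P_{k-1}^{(0,3)}(t)\,(1+t)\,dt .
\]

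The crux, and the step I expect to be the main obstacle, is the explicit evaluation of this last weighted moment. I would use the Rodrigues representation $P_{n}^{(0,3)}(t)(1+t)^{3}=\frac{(-1)^{n}}{2^{n}n!}\frac{d^{n}}{dt^{n}}\bigl[(1-t)^{n}(1+t)^{n+3}\bigr]$ with $n=k-1$, so that
\[
\int_{-1}^{1}P_{k-1}^{(0,3)}(t)(1+t)\,dt=\frac{(-1)^{n}}{2^{n}n!}\int_{-1}^{1}(1+t)^{-2}\,\frac{d^{n}}{dt^{n}}\bigl[(1-t)^{n}(1+t)^{n+3}\bigr]\,dt .
\]
Integrating by parts $n$ times and checking at each stage that the boundary terms vanish (near $t=-1$ the surviving power of $1+t$ stays nonnegative, and near $t=1$ the derivative of $(1-t)^{n}(\cdots)$ still carries a positive power of $1-t$), the accumulated constants multiply to $(n+1)!$ and the integral reduces to $(n+1)!\int_{-1}^{1}(1-t)^{n}(1+t)\,dt$. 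An elementary substitution $s=1-t$ evaluates this Beta-type integral to $2^{n+2}/\bigl((n+1)(n+2)\bigr)$, giving $\int_{-1}^{1}P_{k-1}^{(0,3)}(t)(1+t)\,dt=\tfrac{4(-1)^{k-1}}{k+1}$. Substituting back yields $I_{\mathbf{y}}=\tfrac{1}{2}(-1)^{k-1}$, completing the proof. The care needed in justifying the repeated integration by parts against the singular weight $(1+t)^{-2}$ is the one delicate point; everything else is routine orthogonality bookkeeping.
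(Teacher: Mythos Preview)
Your proof is correct, and the overall architecture matches the paper's: collapse the tetrahedral integrals to one-dimensional ones, integrate by parts twice against $(L_{k+1}-L_{k})$, use Legendre orthogonality to kill the bulk terms, and read off the boundary contributions via (\ref{Lendpoints}) and (\ref{Pnormalization}). The reduction of $I_{\mathbf{y}}$ for $\mathbf{y}\neq\mathbf{p}$ to the moment $\int_{-1}^{1}P_{k-1}^{(0,3)}(t)(1+t)\,dt$ is exactly what the paper does as well (their $\iota_{k-1}$).

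The genuine difference lies in how you evaluate $\iota_{k-1}$. The paper proceeds indirectly: it invokes the contiguous relation $(2k+3)P_{k}^{(0,2)}=(k+3)P_{k}^{(0,3)}+kP_{k-1}^{(0,3)}$ to derive a two-term recursion for $\iota_{k}$, imports the value of $\int_{-1}^{1}P_{k}^{(0,2)}(t)(1+t)\,dt$ from an external reference, and then verifies the closed form $\iota_{k}=4(-1)^{k}/(k+2)$ by induction. Your route via the Rodrigues representation and $n$-fold integration by parts is more direct and entirely self-contained; it avoids both the recursion and the citation. What the paper's approach buys is that it stays in the realm of standard Jacobi identities; what yours buys is a one-shot closed-form evaluation.

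On the point you flag as delicate: it is in fact harmless. At the $j$-th integration by parts the integrand is, up to a constant, $(1+t)^{-2-j}\,v^{(n-j)}(t)$ with $v(t)=(1-t)^{n}(1+t)^{n+3}$. A Leibniz expansion shows $v^{(n-j)}$ carries a factor $(1+t)^{3+j}$, so the product is a genuine polynomial with a factor $(1+t)$; likewise each boundary term $[(1+t)^{-2}]^{(j-1)}v^{(n-j)}$ carries a factor $(1+t)^{2}$ and vanishes at $t=-1$. Hence no limiting argument against a singular weight is actually needed, and your computation goes through cleanly.
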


\begin{proof}
We start with $\mathbf{y}=\mathbf{p}$. We evaluate the inner integral
explicitly and apply integration by parts to get%
\begin{align*}
I_{\mathbf{p}}  &  =\int_{0}^{1}P_{k-1}^{\left(  0,3\right)  }\left(
1-2x_{1}\right)  \left(  L_{k+1}-L_{k}\right)  ^{\prime\prime}\left(
1-2x_{1}\right)  \left(  \int_{0}^{1-x_{1}}\int_{0}^{1-x_{1}-x_{2}}%
1dx_{3}dx_{2}\right)  dx_{1}\\
&  =\frac{1}{2}\int_{0}^{1}\left(  x_{1}-1\right)  ^{2}P_{k-1}^{\left(
0,3\right)  }\left(  1-2x_{1}\right)  \left(  L_{k+1}-L_{k}\right)
^{\prime\prime}\left(  1-2x_{1}\right)  dx_{1}\\
&  =\left.  -\frac{1}{4}\left(  x_{1}-1\right)  ^{2}P_{k-1}^{\left(
0,3\right)  }\left(  1-2x_{1}\right)  \left(  L_{k+1}-L_{k}\right)  ^{\prime
}\left(  1-2x_{1}\right)  \right\vert _{0}^{1}\\
&  -\frac{1}{4}\int_{0}^{1}\left(  \left(  x_{1}-1\right)  ^{2}P_{k-1}%
^{\left(  0,3\right)  }\left(  1-2x_{1}\right)  \right)  ^{\prime}\left(
L_{k+1}-L_{k}\right)  ^{\prime}\left(  1-2x_{1}\right)  dx_{1}\\
&  =\frac{1}{4}P_{k-1}^{\left(  0,3\right)  }\left(  1\right)  \left(
L_{k+1}-L_{k}\right)  ^{\prime}\left(  1\right) \\
&  +\frac{1}{8}\left.  \left(  \left(  x_{1}-1\right)  ^{2}P_{k-1}^{\left(
0,3\right)  }\left(  1-2x_{1}\right)  \right)  ^{\prime}\left(  L_{k+1}%
-L_{k}\right)  \left(  1-2x_{1}\right)  \right\vert _{0}^{1}\\
&  -\frac{1}{8}\int_{0}^{1}\underset{g\left(  x_{1}\right)
}{\underbrace{\left(  \left(  x_{1}-1\right)  ^{2}P_{k-1}^{\left(  0,3\right)
}\left(  1-2x_{1}\right)  \right)  ^{\prime\prime}}}\left(  L_{k+1}%
-L_{k}\right)  \left(  1-2x_{1}\right)  dx_{1}.
\end{align*}
Since $g\in\mathbb{P}_{k-1}$, the orthogonality properties of the Legendre
polynomials imply that the last term vanishes. Hence,%
\begin{align*}
I_{\mathbf{p}}  &  =\frac{1}{4}P_{k-1}^{\left(  0,3\right)  }\left(  1\right)
\left(  L_{k+1}-L_{k}\right)  ^{\prime}\left(  1\right) \\
&  -\frac{1}{8}\left.  \left(  \left(  x_{1}-1\right)  ^{2}P_{k-1}^{\left(
0,3\right)  }\left(  1-2x_{1}\right)  \right)  ^{\prime}\left(  L_{k+1}%
-L_{k}\right)  \left(  1-2x_{1}\right)  \right\vert _{x_{1}=0}.
\end{align*}
The endpoint properties of the Legendre and Jacobi polynomials (cf.
(\ref{Lendpoints}), (\ref{Pnormalization})) imply that the second term
vanishes and%
\[
I_{\mathbf{p}}=\frac{k+1}{4}.
\]
Next, we consider the integral for $\mathbf{y}\neq\mathbf{p}$. We get again by
integration by parts
\begin{align*}
I_{\mathbf{y}}  &  =\int_{0}^{1}P_{k-1}^{\left(  0,3\right)  }\left(
1-2x_{1}\right)  \left(  \int_{0}^{1-x_{1}}\left(  1-x_{1}-x_{2}\right)
\left(  L_{k+1}-L_{k}\right)  ^{\prime\prime}\left(  1-2{x}_{2}\right)
dx_{2}\right)  dx_{1}\\
&  =-\frac{1}{2}\int_{0}^{1}P_{k-1}^{\left(  0,3\right)  }\left(
1-2x_{1}\right)  \left.  \left(  \left(  1-x_{1}-x_{2}\right)  \left(
L_{k+1}-L_{k}\right)  ^{\prime}\left(  1-2{x}_{2}\right)  \right)  \right\vert
_{0}^{1-x_{1}}dx_{1}\\
&  -\frac{1}{2}\int_{0}^{1}P_{k-1}^{\left(  0,3\right)  }\left(
1-2x_{1}\right)  \int_{0}^{1-x_{1}}\left(  L_{k+1}-L_{k}\right)  ^{\prime
}\left(  1-2{x}_{2}\right)  dx_{2}dx_{1}\\
&  =\frac{1}{2}\left(  L_{k+1}-L_{k}\right)  ^{\prime}\left(  1\right)
\int_{0}^{1}P_{k-1}^{\left(  0,3\right)  }\left(  1-2x_{1}\right)  \left(
1-x_{1}\right)  dx_{1}\\
&  +\frac{1}{4}\int_{0}^{1}P_{k-1}^{\left(  0,3\right)  }\left(
1-2x_{1}\right)  \left.  \left(  L_{k+1}-L_{k}\right)  \left(  1-2{x}%
_{2}\right)  \right\vert _{0}^{1-x_{1}}dx_{1}\\
&  =\frac{1}{2}\left(  L_{k+1}-L_{k}\right)  ^{\prime}\left(  1\right)
\int_{0}^{1}P_{k-1}^{\left(  0,3\right)  }\left(  1-2x_{1}\right)  \left(
1-x_{1}\right)  dx_{1}\\
&  +\frac{1}{4}\int_{0}^{1}P_{k-1}^{\left(  0,3\right)  }\left(
1-2x_{1}\right)  \left(  L_{k+1}-L_{k}\right)  \left(  2x_{1}-1\right)
dx_{1},
\end{align*}
where we used $\left(  L_{k+1}-L_{k}\right)  \left(  1\right)  =0$ for the
last equality. Again by the orthogonality properties of the Legendre
polynomials, the last summand is zero and we get%
\begin{equation}
I_{\mathbf{y}}=\frac{k+1}{8}\iota_{k-1}\quad\text{with }\iota_{k}:=\int%
_{-1}^{1}P_{k}^{\left(  0,3\right)  }\left(  t\right)  \left(  t+1\right)  dt.
\label{defIytemp}%
\end{equation}
We employ \cite[18.9.5]{NIST:DLMF} for $\beta=2$, $\alpha=0$, $n=k$, i.e.,%
\[
\left(  2k+3\right)  P_{k}^{(0,2)}=\left(  k+3\right)  P_{k}^{\left(
0,3\right)  }+kP_{k-1}^{\left(  0,3\right)  },
\]
to obtain%
\[
\iota_{k}=-\frac{k}{k+3}\iota_{k-1}+\frac{2k+3}{k+3}\int_{-1}^{1}P_{k}%
^{(0,2)}\left(  t\right)  \left(  t+1\right)  dt.
\]
The last integral has been computed in \cite[Lem. C.1]{CCSS_CR_1}, and we
obtain%
\[
\iota_{k}=-\frac{k}{k+3}\iota_{k-1}+4\left(  -1\right)  ^{k}\frac{\left(
2k+3\right)  }{\left(  k+1\right)  \left(  k+2\right)  \left(  k+3\right)  }.
\]
For $k=0$, it holds $P_{0}^{\left(  0,3\right)  }\left(  x\right)  =1$ and
$\iota_{0}=2$. It is easy to verify by induction that $\iota_{k}:=4\left(
-1\right)  ^{k}/\left(  k+2\right)  $ satisfies the initial value and the
recurrence. The combination with (\ref{defIytemp}) leads to the assertion.%
\end{proof}

%\bibliographystyle{abbrv}
%\bibliography{nlailu}
\def\cprime{$'$}

\end{document}